\newtheorem{thm}{Theorem}
\newtheorem{lem}{Lemma}
\newtheorem{prop}{Proposition}
\newtheorem{cor}{Corollary}
\newtheorem{open}{Open question}
\newtheorem{thmA}{Theorem}
\theoremstyle{definition}
\newtheorem{rem}{Remark}
\newcommand{\ensembles}[1]{\mathbb{#1}}
	\newcommand{\N}{\ensembles{N}}
	\newcommand{\Z}{\ensembles{Z}}
	\newcommand{\R}{\ensembles{R}}
\newcommand{\ind}[1]{\mathbb{I}_{\{#1\}}}
	\renewcommand{\P}{\ensembles{P}}
	\newcommand{\E}{\ensembles{E}}
\renewcommand{\Pr}[1]{\P\left(#1\right)}
\newcommand{\Prc}[2]{\P\left(#1 \;\middle|\; #2\right)}
\newcommand{\Es}[1]{\E\left[#1\right]}
\newcommand{\Esc}[2]{\E\left[#1 \; \middle|\; #2\right]}
\newcommand{\Var}[1]{\mathrm{Var}\left(#1\right)}
\renewcommand{\d}{\mathrm{d}}
\newcommand{\ex}{\mathrm{e}}
\renewcommand{\i}{\mathrm{i}}
\newcommand{\q}{\mathbf{q}}
\newcommand{\cadlag}{c\`adl\`ag}
\newcommand{\Ballfpp}[1][t]{\mathrm{Ball}^{\mathrm{fpp}}_{#1}}
\newcommand{\hBall}[1][r]{\overline{\mathrm{Ball}}\vphantom{B}_{#1}}
\newcommand{\hBallfpp}[1][t]{\overline{\mathrm{Ball}}\vphantom{B}^{\mathrm{fpp}}_{#1}}
\newcommand{\hBallEden}[1][r]{\overline{\mathrm{Ball}}\vphantom{B}^{\mathrm{Eden}}_{#1}}
        \newcommand{\map}{\mathfrak{m}}
        \newcommand{\Map}{\mathfrak{M}}
\newcommand{\cv}[1][n]{\quad\xrightarrow[#1 \to \infty]{}\quad}
\newcommand{\cvdist}[1][n]{\quad\xrightarrow[#1 \to \infty]{(d)}\quad}
\newcommand{\cvas}[1][n]{\quad\xrightarrow[#1 \to \infty]{(a.s.)}\quad}
\newcommand{\cvproba}[1][n]{\quad\xrightarrow[#1 \to \infty]{(\P)}\quad}
\newcommand{\cvL}[1][n]{\quad\xrightarrow[#1 \to \infty]{(\mathrm{L}^1)}\quad}
\title{Infinite random planar maps related to Cauchy processes}
\DeclareSymbolFont{extraup}{U}{zavm}{m}{n}
\DeclareMathSymbol{\vardspade}{\mathalpha}{extraup}{81}
\DeclareMathSymbol{\varheart}{\mathalpha}{extraup}{86}
\DeclareMathSymbol{\vardiamond}{\mathalpha}{extraup}{87}
\DeclareMathSymbol{\varclub}{\mathalpha}{extraup}{84}
\renewcommand*{\@fnsymbol}[1]{\ensuremath{\ifcase#1\or  \vardspade \or \vardiamond \or \varheart\or \varclub \or
   \mathsection\or \mathparagraph\or \|\or **\or \dagger\dagger
   \or \ddagger\ddagger \else\@ctrerr\fi}}
\author{
	Timothy \textsc{Budd}\thanks{Institut de Physique Théorique, CEA, Université Paris-Saclay. \hfill  \href{mailto:timothy.budd@cea.fr}{\texttt{timothy.budd@cea.fr}}} 
\qquad\&\qquad
	Nicolas \textsc{Curien}\thanks{D\'epartement de Math\'ematiques, Univ. Paris-Sud, Universit\'e Paris-Saclay and IUF.\hfill  \href{mailto:nicolas.curien@gmail.com}{\texttt{nicolas.curien@gmail.com}}} 
\qquad\&\qquad
	Cyril \textsc{Marzouk}\thanks{D\'epartement de Math\'ematiques, Univ. Paris-Sud, Universit\'e Paris-Saclay.\hfill  \href{mailto:cyril.marzouk@math.u-psud.fr}{\texttt{cyril.marzouk@math.u-psud.fr}}}
}
\begin{document}

\maketitle

\begin{abstract}
We study the geometry of infinite random Boltzmann planar maps having weight of polynomial decay of order $k^{-2}$ for each vertex of degree $k$. These correspond to the dual of the discrete ``stable maps'' of Le Gall and Miermont \cite{Le_Gall-Miermont:Scaling_limits_of_random_planar_maps_with_large_faces} studied in \cite{Budd-Curien:Geometry_of_infinite_planar_maps_with_high_degrees} related to a symmetric Cauchy process, or alternatively to the maps obtained after taking the gasket of a critical $O(2)$-loop model on a random planar map. We show that these maps have a striking and uncommon geometry. In particular we prove that the volume of the ball of radius $r$ for the graph distance has an intermediate rate of growth and scales as $\ex^{\sqrt{r}}$. We also perform first passage percolation with exponential edge-weights and show that the volume growth for the fpp-distance scales as $\ex^{r}$. Finally we consider site percolation on these lattices: although percolation occurs only at $p=1$, we identify a phase transition at $p=1/2$ for the length of interfaces. On the way we also prove new estimates on random walks attracted to an asymmetric Cauchy process.
\end{abstract}

\begin{figure}[!ht]\centering
\includegraphics[width=.7\linewidth]{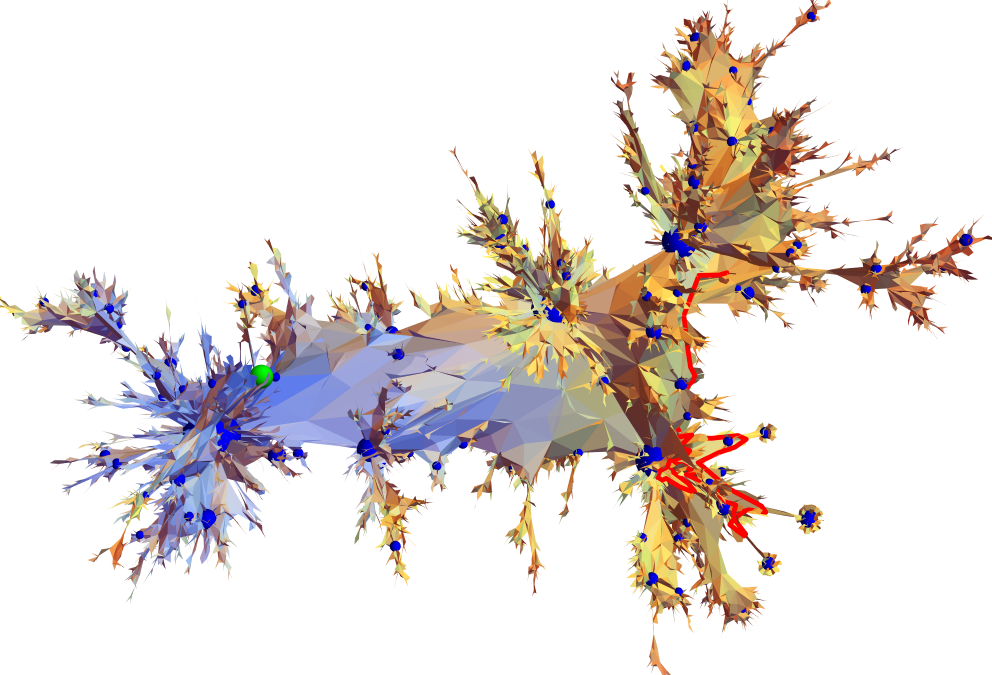}
\caption{A representation of a geodesic ball in a randomly sampled infinite map considered in this paper. The blue balls indicate high degree vertices, the green ball is the root-vertex, while the red curve indicates the boundary of the ball.}
\end{figure}

\section{Introduction}
This work studies the geometry of random Boltzmann planar maps with high degrees in the continuation of \cite{Budd-Curien:Geometry_of_infinite_planar_maps_with_high_degrees} and focuses on the critical case $a=2$ which was left aside there. The geometry of those maps turns out to involve in an intricate way random walks whose step distribution is in the domain of attraction of the standard symmetric Cauchy process and displays an unexpected large scale geometry such as an intermediate rate of growth. Let us first present rigorously the model of random maps we are dealing with. To stick to the existing literature we prefer to introduce the model of maps with large faces and then take their dual maps to get maps with large vertex degrees (as opposed to dealing with maps with large vertex degrees directly).

\paragraph{Boltzmann planar maps with high degrees.}  
In all this work, we consider rooted planar maps, i.e. graphs embedded in the two-dimensional sphere or in the plane, and equipped  with a distinguished oriented edge, the \emph{root-edge} of the map, whose origin vertex is the \emph{root-vertex} of the map, and the face adjacent to the right of the root-edge is called the \emph{root-face}. For technical reasons, we restrict ourselves to \emph{bipartite} maps (all faces have even degree), we denote by $\mathcal{M}$ the set of all such finite maps. Given a non-zero sequence $ \mathbf{q}= (q_{k})_{ k \geq 1}$ of non-negative numbers we define a measure $w$ on $ \mathcal{M}$ by putting for each $ \map \in \mathcal{M}$:
\[w(\map) \enskip=\enskip \prod_{f\in \mathsf{Faces}(\map)} q_{ \deg(f)/2}.\]
When the total mass $w ( \mathcal{M})$ is finite we say that $ \mathbf{q}$ is \emph{admissible} and we can normalise $w$ into a probability measure. This model of random planar maps was considered in \cite{Marckert-Miermont:Invariance_principles_for_random_bipartite_planar_maps}. We shall also further assume that the weight sequence $ \mathbf{q}$ is \emph{critical} in the sense of \cite{Marckert-Miermont:Invariance_principles_for_random_bipartite_planar_maps} (see also \cite{Budd:The_peeling_process_of_infinite_Boltzmann_planar_maps,Curien:Peccot}), which according to \cite{Bernardi-Curien-Miermont:A_Boltzmann_approach_to_percolation_on_random_triangulations} is equivalent to having 
\[\sum_{  \map \in \mathcal{M}} w( \map) \# \mathsf{Vertices}( \map)^{2} = \infty.\]
Under these conditions, one can define a random infinite planar map $ \Map_{\infty}$ of the plane as the local limit as $n \to \infty$ of random Boltzmann maps distributed according to $w$ and conditioned on having $n$ vertices (along the sequence of integers for which this makes sense), see \cite{Bjornberg-Stefansson:Recurrence_of_bipartite_planar_maps, Stephenson:Local_convergence_of_large_critical_multi_type_Galton_Watson_trees_and_applications_to_random_maps, Curien:Peccot}; the map $\Map_\infty$ is almost surely locally finite and one-ended. As in  \cite{Budd-Curien:Geometry_of_infinite_planar_maps_with_high_degrees,Le_Gall-Miermont:Scaling_limits_of_random_planar_maps_with_large_faces} we shall henceforth restrict ourself to weight sequences of the form 
\begin{equation}\label{eq:asymptotic_q}
q_k \enskip\sim\enskip \mathsf{p}_\q \cdot c_\q^{-k+1} \cdot k^{-a} \qquad\text{as}\qquad k \to \infty,
\end{equation}
with $a \in (3/2, 5/2)$, for some constants $c_\mathbf{q},  \mathsf{p}_{ \mathbf{q}}>0$.  Note the above admissibility and criticality conditions impose a fine tuning of the weights $q_{k}$ but such sequences do exist as proved in \cite{Le_Gall-Miermont:Scaling_limits_of_random_planar_maps_with_large_faces}, see also the explicit sequences provided in \cite[Section 6]{Budd-Curien:Geometry_of_infinite_planar_maps_with_high_degrees}. As in \cite{Budd-Curien:Geometry_of_infinite_planar_maps_with_high_degrees}, the object of study in the present work is not the map $  \Map_{\infty}$ itself but rather its dual map 
\[\Map_{\infty}^{\dagger},\]
where the roles of vertices and faces are exchanged (the root-edge of $\Map_\infty^\dagger$ is the dual of the root-edge of $ \Map_{\infty}$ oriented to cross it from right to left, so the root-vertex of $\Map_{\infty}^{\dagger}$ corresponds to the root-face of $\Map_{\infty}$). With our assumptions on the weight sequence $ \mathbf{q}$, the random map $ \Map_{\infty}^{\dagger}$ thus has vertices of very large degree. The geometry of $ \Map_{\infty}^{\dagger}$ displays a phase transition depending on the value $a \in (3/2;5/2)$: for $a \in (3/2;2)$ --the so-called \emph{dense phase}-- the volume growth of the map is exponential in the radius whereas for $a \in (2;5/2)$ --the so-called \emph{dilute phase}-- it is polynomial of exponent $ \frac{a-1/2}{a-2}$, see \cite{Budd-Curien:Geometry_of_infinite_planar_maps_with_high_degrees}. The critical case $a=2$ was left aside in \cite{Budd-Curien:Geometry_of_infinite_planar_maps_with_high_degrees} and is precisely the object of our current investigation!

\paragraph{Results.} For $r \geq 0$ we denote by $ \mathrm{Ball}_{r}( \Map_{\infty}^{\dagger})$ the sub-map of $ \Map_{\infty}^{\dagger}$ obtained by keeping only the vertices which are at distance at most $r$ from the root-vertex of $\Map_{\infty}^{\dagger}$ and we consider its hull 
\[\overline{\mathrm{Ball}}_{r}( \Map_{\infty}^{\dagger})\]
made by adding to $ \mathrm{Ball}_{r}(  \Map_{\infty}^{\dagger})$ all the finite connected components of its complement in $ \Map_{\infty}^{\dagger}$. We define the volume $\|  \mathfrak{m}\|$ of a map as being the number of faces of the map.
Our main result shows that the volume growth of $  \Map_{\infty}^{\dagger}$ is intermediate between exponential and polynomial:

\begin{thmA}[Graph volume growth]\label{thm:intro_dual} If $ \mathbf{q}$ is admissible, critical and satisfies \eqref{eq:asymptotic_q} with $a =2$ then we have 
  \begin{eqnarray*}\frac{\log \| \overline{\mathrm{Ball}}_{r}( \Map_{\infty}^{\dagger})\|}{ \sqrt{r}}  & \xrightarrow[r\to\infty]{ (\mathbb{P})} & 
  \frac{3 \pi}{\sqrt{2}}.  \end{eqnarray*}
\end{thmA}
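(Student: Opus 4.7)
The strategy is to explore $\hBall_r(\Map_\infty^\dagger)$ through a layer-by-layer peeling algorithm applied to the primal map $\Map_\infty$, following the framework developed in \cite{Budd-Curien:Geometry_of_infinite_planar_maps_with_high_degrees}. At each generation $r$, all faces of $\Map_\infty$ adjacent to the already explored region are revealed and the finite connected components of the complement are filled in; the spatial Markov property of Boltzmann maps then ensures that the unexplored region is itself a Boltzmann map with weights $\mathbf{q}$ and a simple boundary of perimeter $P_r$. The explored region coincides with $\hBall_r(\Map_\infty^\dagger)$, and the pair $(P_r, V_r)$ with $V_r = \|\hBall_r(\Map_\infty^\dagger)\|$ is a Markov chain whose transitions are explicit in terms of $\mathbf{q}$.

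The heart of the proof is a sharp asymptotic analysis of the perimeter chain $(P_r)_{r\ge 0}$ at the critical value $a=2$. Under \eqref{eq:asymptotic_q}, the step distribution of the random walk encoding a peeling step lies in the domain of attraction of an asymmetric Cauchy law (stable index $a-1=1$), and the sharp estimates on such walks announced in the abstract should furnish both the conditional drift and the fluctuations of $\log P_{r+1}-\log P_r$ given $P_r$. One expects drift and noise to balance in such a way that $\log P_r / \sqrt r$ converges in probability to the deterministic constant $\tfrac{3\pi}{\sqrt 2}$, the prefactor being identifiable from an explicit computation involving the Laplace exponent of the Cauchy process $\Cau$ and its version conditioned to stay positive $\Caupo$. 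The factor $\pi$ comes naturally from Fourier/Laplace computations with the Cauchy density, while the $\sqrt 2$ reflects the diffusive-type mechanism that squares $\log P_r$ into a linear-in-$r$ quantity.

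It remains to show that $\log V_r$ and $\log P_r$ coincide on the $\sqrt r$ scale, i.e.\ that $(\log V_r - \log P_r)/\sqrt r \to 0$. The lower bound $V_r \ge P_r$ is immediate since the peeled cycle lies in the hull; for the upper bound one controls the total volume accumulated during the peeling steps of a single generation by a constant multiple of the current perimeter times a polylogarithmic correction, giving $V_r \le P_r \cdot \mathrm{poly}(\log P_r)$ with high probability. The main obstacle is the sharp Cauchy scaling at the exact critical index $\alpha=1$: standard stable limit theorems fail here because of slowly varying corrections and the absence of a canonical centering, so obtaining convergence in probability (rather than mere tightness) and identifying the exact constant $\tfrac{3\pi}{\sqrt 2}$ will crucially rely on the new random walk estimates in the asymmetric Cauchy regime developed in this paper.
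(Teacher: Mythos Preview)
Your outline contains two concrete errors that make the argument unworkable as stated.

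First, the perimeter and volume do \emph{not} scale the same way. In the paper the hull perimeter satisfies $\log |\partial^\ast \hBall_r(\Map_\infty^\dagger)|/\sqrt{r}\to \pi\sqrt{2}$, not $3\pi/\sqrt{2}$; the extra factor $3/2$ in the volume constant comes from the relation $\log V_n/\log P_n\to 3/2$ for the peeling process (Lemma~\ref{lem:as_convergence_perimeter_volume} and Theorem~\ref{thm:convergence_processes_perimeter_volume}), reflecting that the volume swallowed when filling in a hole of perimeter $2\ell$ is of order $\ell^{3/2}$. Your claimed bound $V_r\le P_r\cdot\mathrm{polylog}(P_r)$ is therefore false, and your proposed perimeter constant is off precisely by the factor that should have gone into the volume--perimeter comparison.

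Second, the mechanism you describe for the $\sqrt{r}$ scaling is not the right one. The step law $\nu$ governing a single edge-peeling is attracted to the \emph{symmetric} Cauchy process (see \eqref{eq:tail_nu} and Proposition~\ref{prop:convergence_walk_Cauchy}); the asymmetric-Cauchy estimates of Proposition~\ref{prop:intro_tail_first_ladder_time_Cauchy_walk} are used only in the percolation section. The generation-indexed increments $P_{r+1}-P_r$ you invoke are not single steps of such a walk at all: one layer consists of roughly $2P_r/(\mathsf{p}_\q\log P_r)$ edge-peelings, during which the perimeter barely moves (Lemma~\ref{lem:time_complete_first_layer}). The paper works instead in peeling time $n$: it shows $H_n/(\log n)^2\to 1/(2\pi^2)$ (Proposition~\ref{prop:limit_height_dual}), whence $\log\theta_r/\sqrt{r}\to\pi\sqrt{2}$, and then combines this with $\log P_n/\log n\to 1$ and $\log V_n/\log n\to 3/2$ evaluated at the random time $n=\theta_r$. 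The $\sqrt{r}$ thus arises from inverting a $(\log n)^2$ relation between height and peeling time, not from a diffusive balance in $\log P_r$.
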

Our results also imply that the number of edges adjacent to the hull of the ball of radius $r$ is of order $ \ex^{  \pi \sqrt{2r}}$, see Theorem \ref{thm:graph_distance_growth}. It is maybe surprising to notice that the first order of the growth of $\| \overline{\mathrm{Ball}}_{r}( \Map_{\infty}^{\dagger})\|$ in fact does not depend on the parameter $ \mathsf{p}_{ \mathbf{q}}$ introduced in \eqref{eq:asymptotic_q} --a similar phenomenon was already observed in the dense phase $a \in (3/2,2)$ in \cite[Theorem 5.3]{Budd-Curien:Geometry_of_infinite_planar_maps_with_high_degrees}. However, the scaling constant $ \mathsf{p}_{ \mathbf{q}}$ does appear if one modifies the metric as follows: Consider the first-passage percolation (fpp) model on $ \Map_{\infty}^{\dagger}$ where each edge gets an independent weight distributed as an exponential law of parameter $1$ (this model is sometimes referred to as the Eden model on $ \Map_{\infty}$). These weights are interpreted as random lengths for the edges of $ \Map_{\infty}^{\dagger}$ and give rise to the associated fpp-distance. If $\Ballfpp[r]$ and $\hBallfpp[r]$ are the associated ball and hull of radius $r \geq 0 $ we can prove:
\begin{thmA}[Eden volume growth]\label{thm:intro_Eden} If $ \mathbf{q}$ is admissible, critical and satisfies \eqref{eq:asymptotic_q} with $a =2$ then we have 
 \begin{eqnarray*}\frac{\log \|\hBallfpp[r](\Map_\infty^{\dagger})\|}{r}  & \xrightarrow[r\to\infty]{( \mathbb{P})} & \frac{3}{2} \pi^2 \mathsf{p}_\q.  \end{eqnarray*}
\end{thmA}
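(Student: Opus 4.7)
The approach is to couple the fpp exploration with a Markovian peeling of $\Map_\infty^\dagger$ (presumably the same tool that must underlie Theorem~\ref{thm:intro_dual}), exploit the memorylessness of the exponential to reduce the problem to a time change, and then apply a law of large numbers to the peeling perimeter in the critical case $a=2$.

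First, set up the fpp peeling: start the exploration at the root-vertex and, at each step, reveal the boundary edge whose iid $\mathrm{Exp}(1)$ weight yields the smallest new fpp-time. By the memoryless property, conditionally on the exploration up to step $n$, the next edge is uniformly distributed among the $P_n$ current boundary edges and the waiting time until its revelation is an independent $\mathrm{Exp}(P_n)$ variable (minimum of $P_n$ iid $\mathrm{Exp}(1)$ clocks). Letting $\mathcal{E}_n$ be the explored region after $n$ peeling steps (the hull of the revealed edges), $V_n := \|\mathcal{E}_n\|$ its number of faces, and $(E_k)_{k\geq 1}$ an iid sequence of $\mathrm{Exp}(1)$ variables independent of the now-uniformised peeling, one gets the identity
\[\hBallfpp[r](\Map_\infty^{\dagger}) \enskip=\enskip \mathcal{E}_{N_r}, \qquad N_r \enskip=\enskip \sup\bigl\{n \geq 0 : T_n \leq r\bigr\}, \qquad T_n \enskip=\enskip \sum_{k=1}^n \frac{E_k}{P_k}.\]

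Second, the proof should reduce to scaling results for the peeling. In the critical regime $a=2$ one expects a law of large numbers of the form $P_n/n \to \alpha(\q)$ and $V_n/n \to \beta(\q)$ in probability, for explicit positive constants. Granted this input, $\mathbb{E}[T_n] \sim \alpha^{-1}\log n$ while $\mathrm{Var}(T_n) = O(1)$, so Chebyshev yields $T_n \sim \alpha^{-1}\log n$ in probability, whence $N_r \sim e^{\alpha r}$ and $\|\hBallfpp[r]\| \sim \beta\, e^{\alpha r}$ in probability, giving the claimed convergence with constant $\alpha$. Matching $\alpha = \tfrac{3}{2}\pi^2\mathsf{p}_\q$ should then come from an explicit expression for the drift of the peeling perimeter jumps in terms of the generating series associated to $\q$, combined with the tail asymptotic \eqref{eq:asymptotic_q}. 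Note that the prefactor $\beta$ does not survive in the logarithm, which is consistent with the independence of the limit from the subleading details of $\q$.

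The main obstacle is establishing the linear growth $P_n \sim \alpha n$ itself. The peeling increments have tails of order $k^{-2}$, hence infinite mean, and sit precisely on the boundary of the Cauchy domain of attraction, where asymmetric random walks are notoriously delicate and typically acquire logarithmic corrections to naive scalings. Overcoming this is presumably the content of the ``new estimates on random walks attracted to an asymmetric Cauchy process'' announced in the abstract; the linear scaling, and a fortiori the exact value of $\alpha$, should rest on the precise positive drift of the peeling perimeter walk, itself dictated by the admissibility and criticality of $\q$ together with \eqref{eq:asymptotic_q}.
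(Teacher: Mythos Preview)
Your overall architecture---uniform peeling, the representation $T_n=\sum E_k/P_k$, inverting to $N_r$, and reading off the volume---is exactly the paper's. The genuine gap is the ``law of large numbers'' you posit: in the critical case $a=2$ the half-perimeter satisfies $n^{-1}P_n \Rightarrow \mathsf{p}_\q\,\Upsilon^\uparrow_1$, where $\Upsilon^\uparrow_1$ is the value at time $1$ of a symmetric Cauchy process conditioned to stay positive. This limit is a genuine \emph{random} variable, not a constant $\alpha$. The step distribution $\nu$ has symmetric $k^{-2}$ tails (so no drift and no centring), and the $h^\uparrow$-transform does not produce concentration; the peeling walk is in the domain of attraction of a Cauchy process, and that is the content of Theorem~\ref{thm:convergence_processes_perimeter_volume}. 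Consequently your variance bound $\mathrm{Var}(T_n)=O(1)$ fails: the summands $1/P_k$ do not converge to $1/(\alpha k)$ but fluctuate at scale $1/k$, and they are correlated through the common path of $P$.

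What the paper does instead is average in expectation first: Lemma~\ref{lem:bound_moments_perimeter_FPP} gives uniform integrability of $n/P_n$, whence $\E[n/P_n]\to \E[(\mathsf{p}_\q\Upsilon^\uparrow_1)^{-1}]=2/(\pi^2\mathsf{p}_\q)$ (the constant is computed via the Lamperti representation). This yields $\E[T_n]\sim (\pi^2\mathsf{p}_\q)^{-1}\log n$. Concentration of $T_n/\log n$ around its mean then requires a separate decorrelation estimate (Lemma~\ref{lem:decorrelation_FPP}), based on a conditional local limit theorem showing that $P_j$ forgets $P_i$ when $j\gg i$. This replaces your Chebyshev step and is the technical heart of the proof. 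Finally, note that your constant matching is off: the rate for $\log N_r/r$ is $\pi^2\mathsf{p}_\q$, and the factor $3/2$ enters only through $\log V_n/\log n\to 3/2$ (Lemma~\ref{lem:as_convergence_perimeter_volume}), not through the perimeter. Likewise $V_n$ scales like $n^{3/2}$ times a random variable, not linearly in $n$.
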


Again, our result also includes the perimeter, see Theorem \ref{thm:perimeter_volume_fpp}. Comparing the two theorems, we see that the graph ball of radius $r$ has a volume growth of type $\ex^{ \sqrt{r}}$ whereas the fpp-ball of radius $r$ has exponential volume growth. This indeed shows that large degree vertices dramatically affect the geometric structure by diminishing the distances due to the low weights on their edges creating short-cuts in the graph. A similar and even more dramatic effect was observed in \cite{Budd-Curien:Geometry_of_infinite_planar_maps_with_high_degrees} where the fpp-distance to infinity becomes finite in the dense case $ a \in (3/2,2)$. The case $a=2$ we consider here is thus in between the dense case $a \in (3/2,2)$ and the dilute case $a \in (2,5/2)$ where the fpp-distance and graph metric are believed to be proportional at large scales, as is the case for random triangulations \cite{Curien-Le_Gall:First_passage_percolation_and_local_modifications_of_distances_in_random_triangulations}.

\begin{rem}
In the two previous theorems, we have considered the volume of a map as its number of faces; we may as well consider its number of vertices, the results would be exactly the same, see Remark \ref{rem:general_peeling_vertices_faces} below (written for $\Map_\infty$ as opposed to its dual here).
\end{rem}

We also study Bernoulli site percolation on the random graphs $\Map_{\infty}^{\dagger}$ with parameter $p \in [0,1]$. Although there is no usual phase transition for the existence of an infinite cluster because these graphs almost surely have infinitely many cut-points, we exhibit a phase transition at $p= \frac{1}{2}$ for the size distribution of the origin cluster. More precisely, we consider face percolation on $ \Map^{(\infty)}$, the half-plane version of $ \Map_{\infty}$, and show that the parameter $ p=1/2$ is singular for the tail of the length of percolation interfaces, see Proposition \ref{prop:number_edges_boundary_perco_half_plane} for details.

\paragraph{Connections with $O(n)$-loop models and SLE/CLE.} As argued in \cite{Le_Gall-Miermont:Scaling_limits_of_random_planar_maps_with_large_faces, Borot-Bouttier-Guitter:A_recursive_approach_to_the_O_n_model_on_random_maps_via_nested_loops}, one natural way to get Boltzmann planar maps with a critical weight sequence $ \mathbf{q}$ satisfiying \eqref{eq:asymptotic_q} with $ a \in (3/2; 5/2)$ is to consider the so-called gasket of a critical loop-decorated planar quadrangulation which is obtained by pruning the interior of all the outer-most loops in the map. See \cite{Chen-Curien-Maillard:The_perimeter_cascade_in_critical_Boltzmann_quadrangulations_decorated_by_an_O_n_loop_model} for a study of the perimeter cascades in such maps and \cite{Budd:The_peeling_process_on_random_planar_maps_coupled_to_an_O_n_loop_model} for a peeling approach to these objects. Although the precise case we are dealing with in these lines seems excluded in \cite{Borot-Bouttier-Guitter:A_recursive_approach_to_the_O_n_model_on_random_maps_via_nested_loops}, heuristically the case $a=2$ corresponds to the $O(n)$ model in the critical case $n=2$.  In particular, folklore conjectures state that, once conformally uniformised, these decorated maps should converge towards the CLE$_{\kappa}$ ensembles \cite{Sheffield-Werner:Conformal_loop_ensembles_the_Markovian_characterization_and_the_loop_soup_construction} which are (conjecturally) the limits of the critical $O(n)$-loop models on Euclidean lattices, with the relation $a-2= \frac{4}{\kappa}-1$ with $\kappa \in (8/3,8)$. Our case $a =2$ thus corresponds to the critical case $\kappa = 4$ as expected.

\paragraph{Techniques.} Our main tool is the \emph{edge-peeling process} defined in \cite{Budd:The_peeling_process_of_infinite_Boltzmann_planar_maps} and already used in \cite{Budd-Curien:Geometry_of_infinite_planar_maps_with_high_degrees} in the case when $a \in (3/2,5/2)\backslash\{2\}$. More precisely, the peeling process is an algorithmic way to discover the map and using different algorithms in the process leads to a different understanding of geometric properties of the random map. A common feature of all the peeling processes of  $ \Map_{\infty}$ is that the perimeter and volume growth have the same law independently of the peeling algorithm used. In our case the perimeter process, once rescaled in time and space by $n^{-1}$, converges towards the \emph{symmetric Cauchy process conditioned to stay positive}, see Theorem \ref{thm:convergence_processes_perimeter_volume}. 

As in \cite{Budd-Curien:Geometry_of_infinite_planar_maps_with_high_degrees}, we use the so-called peeling by layers to study the graph metric (Theorem \ref{thm:graph_distance_growth}), the uniform peeling to study the first-passage percolation (Theorem \ref{thm:perimeter_volume_fpp}) and the peeling along percolation interfaces to study percolation (Theorem \ref{thm:cut_edges_half_plane}). The arguments used in this paper are then, on a first glance, very close to those used in \cite{Budd-Curien:Geometry_of_infinite_planar_maps_with_high_degrees}. However, the critical case $a=2$ and the singular characteristics of Cauchy processes make the arguments much more subtle and witness the appearance of totally new phenomena. In passing we also prove several estimates on random walks converging towards (a symmetric or) an asymmetric Cauchy process. Although the literature on random walks converging towards stable L\'evy processes is abundant, it seems that this case is often put aside. Thus, our work may also be viewed as a contribution to the study of random walks and Cauchy processes! In particular we prove: 

\begin{prop}\label{prop:intro_tail_first_ladder_time_Cauchy_walk}
Let $W$ be a random walk on $\Z$ started from $0$ with i.i.d. steps and let $\tau = \inf\{n \ge 1 : W_n \le -1\}$. Suppose that there exist $c_+, c_- > 0$, such that
\[\Pr{W_1>k} \sim \frac{c_+}{k},
\qquad\text{and}\qquad
\Pr{W_1<-k} \sim \frac{c_-}{k}
\qquad\text{as}\qquad k \to \infty.\]
\begin{enumerate}[ref={\theprop(\roman*)}]
	\item\label{prop:tail_first_ladder_time_Cauchy_walk_positive_tail} If $c_+ > c_-$ then $\P(\tau \ge n) = (\log n)^{O(1)}$.
	\item\label{prop:tail_first_ladder_time_Cauchy_walk_negative_tail} If $c_+ < c_-$ then $\P(\tau \ge n) = n^{-1}(\log n)^{O(1)}$.
	\item\label{prop:tail_first_ladder_time_Cauchy_walk_symmetric} If $c_+ = c_-$ and the limit $b = \lim_{n\to\infty}\Es{\frac{W_1}{1+(W_1/n)^2}}$ exists and is finite, then
	\[ \P(\tau \ge n) = n^{-\rho + o(1)}\qquad\text{where}\qquad \rho = \frac{1}{2}+\frac{1}{\pi}\arctan\left(\frac{b}{\pi c_+}\right). \]
\end{enumerate}
\end{prop}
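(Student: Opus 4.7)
The key tool is the classical Sparre Andersen--Spitzer combinatorial identity
\[
\sum_{n=0}^\infty \Pr{\tau > n}\, z^n \enskip=\enskip \exp\!\left(\sum_{n=1}^\infty \frac{z^n}{n}\, \Pr{W_n \ge 0}\right),
\]
which reduces the problem to controlling $\Pr{W_n \ge 0}$ as $n\to\infty$ and then converting back to $\Pr{\tau \ge n}$ via a Karamata-type Tauberian theorem (legitimate since $n\mapsto \Pr{\tau > n}$ is non-increasing).

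To estimate $\Pr{W_n \ge 0}$ I would first work out the Fourier expansion of the step law. Under the given tail assumptions, standard Karamata manipulations yield, as $\xi\to 0$,
\[
\log \Es{\ex^{\i\xi W_1}} \enskip=\enskip -\pi(c_+ + c_-)|\xi| + \i\xi\, b_\xi + \i\xi\, (c_- - c_+)\log(1/|\xi|) + o(|\xi|),
\]
where $b_\xi \to b$ in case (iii). This identifies the weak limit of a suitably recentered, rescaled $W_n$: in case (iii), $W_n/n$ converges in distribution to $b + \Cau_1$, with $\Cau_1$ symmetric Cauchy of scale $\pi c_+$, giving $\Pr{W_n \ge 0} \to \rho$ with $\rho$ as in the statement; in cases (i) and (ii) the centering must be of order $n\log n$, so that $(W_n - n(c_+-c_-)\log n)/n$ converges to an asymmetric Cauchy whose tails, combined with the diverging log drift, force $\Pr{W_n \ge 0} = 1 - \Theta(1/\log n)$ in case (i) and $\Pr{W_n \ge 0} = \Theta(1/\log n)$ in case (ii).

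Plugging these estimates into the Spitzer identity and using $\sum_{k\le N}\frac{1}{k\log k}\sim \log\log N$, the generating function $G(z) = \sum_n z^n\Pr{\tau > n}$ turns out to behave at $z=1$ as: $(1-z)^{-\rho}$ up to slowly varying factors in case (iii); $(1-z)^{-1}$ with polylogarithmic correction in case (i); and a pure polylogarithm in case (ii). The Karamata Tauberian theorem, together with the monotonicity of $\Pr{\tau > n}$, then converts these into the stated asymptotics $\Pr{\tau \ge n} = n^{-\rho + o(1)}$, $(\log n)^{O(1)}$ and $n^{-1}(\log n)^{O(1)}$ respectively.

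\textbf{Main obstacle.} The principal difficulty lies in establishing the asymptotics of $\Pr{W_n \ge 0}$ with sufficient precision in the asymmetric cases (i) and (ii). The critical stable index $\alpha = 1$ is notoriously delicate: the limiting Cauchy process requires a non-standard centering of order $n\log n$ which most references on stable functional limit theorems either exclude or treat only implicitly. One has therefore to argue directly from the characteristic-function expansion above, and carry quantitative control of the polylogarithmic error terms all the way through the Tauberian step, which is where the bulk of the technical work will lie.
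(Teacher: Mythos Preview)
Your treatment of case (iii) is essentially the paper's: both use the centring $b_n=\Es{W_1/(1+(W_1/n)^2)}$ to get $\Pr{W_n>0}\to\rho$, and then Spitzer's condition does the rest.

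In cases (i) and (ii), however, there is a genuine gap. You want to read off $\Pr{W_n\ge 0}=\Theta(1/\log n)$ from the fact that $(W_n-(c_+-c_-)n\log n)/n$ converges weakly to an asymmetric Cauchy law. But weak convergence gives you nothing about the distribution function at a \emph{diverging} argument: you are asking for $\Pr{(W_n/n-b_n)\ge -b_n}$ with $|b_n|\to\infty$, which is a tail estimate outside the scope of any CLT-type statement. As the paper itself notes (Remark~\ref{rem:polylog}), extracting the sharp $\Theta(1/\log n)$ this way requires a Berry--Esseen bound of size $o(1/\log n)$, which in turn needs the stronger hypothesis $\Pr{W_1>k}=c_+k^{-1}+o(k^{-1}(\log k)^{-2})$ --- not available here. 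A second, related issue is that Karamata's Tauberian theorem converts \emph{asymptotic equivalences}, not one-sided bounds, so even with $\Pr{W_n\ge 0}=O(1/\log n)$ you cannot simply invoke it to bound $p_n$.

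The paper sidesteps both problems. For the $O(1/\log n)$ bound on $\Pr{W_n\ge 0}$ it splits $W_n=W_n^{(+)}-W_n^{(-)}$ into its positive and negative jump parts; each is a non-decreasing walk with $1/k$ tails, and a truncation plus Chebyshev argument (Lemma~\ref{lem:deviation_increasing_Cauchy_walk}) shows each concentrates around $c_\pm n\log n$ with exceptional probability $O(1/\log n)$. To pass from $\Pr{W_m\ge 0}\le [s^m]f(s)$ to a bound on $p_n$, the paper exploits the \emph{coefficient-wise monotonicity} of the exponential in the Sparre Andersen identity, then applies Flajolet--Odlyzko singularity analysis to the explicit comparison function --- this converts upper bounds to upper bounds without needing a genuine asymptotic. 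Finally, the lower bound in case (ii) avoids the generating function altogether: simply force a first step $W_1>An\log n$ and control the subsequent negative drift. Your characteristic-function route could in principle be pushed through, but it would require substantially more than you have sketched, and the paper's combinatorial detours are both shorter and work under the minimal tail hypothesis.
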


In Remark \ref{rem:polylog} in Section \ref{sec:RW_Cauchy} below, we discuss more precisely the $O(1)$ in the first two cases. We end this introduction by an open question:
\begin{open}
Is the simple random walk on $\Map_\infty^\dagger$ transient?
\end{open}

\noindent \textbf{Acknowledgments:} We acknowledge the support of the Agence Nationale de la Recherche via the grants ANR Liouville (ANR-15-CE40-0013) and ANR GRAAL (ANR-14-CE25-0014). We thank Lo\"ic Chaumont, Ron Doney and Vladimir Vatutin for help with the literature about random walks with Cauchy-type tails. 

C. M. acknowledges support from the Fondation Math\'ematique Jacques Hadamard, the Fondation Sciences Math\'ematiques de Paris and the Universit\'e Pierre et Marie Curie, where this work was initiated.

T. B. acknowledges support from the Fondation Math\'ematique Jacques Hadamard, ERC-Advance grant 291092 “Exploring the Quantum Universe” (EQU), and the Niels Bohr Institute, University of Copenhagen, where this work was initiated.

\begin{center}
\begin{minipage}{.9\linewidth}
\emph{From now on we fix once and for all an admissible and critical weight sequence $ \mathbf{q}$ satisfying \eqref{eq:asymptotic_q} for the exponent $a=2$ and denote by $ \Map_{\infty}$ the associated infinite random Boltzmann planar map. Moreover, we assume that $\mathbf{q}$ is not supported by any sub-lattice of $\Z$ to avoid complication.}
\end{minipage}  
\end{center}

\paragraph{Notation.}
We shall write $\Z_+$ for $\{0, 1, \dots\}$, $\Z_-$ for $\{\dots, -1, 0\}$ and $\N$ for $\{1, 2, \dots\}$. For every set $A$, we denote by $\mathbb{I}_A$ the indicator function of $A$. For a real-valued sequence $(x_n)_{n \ge 0}$, we put $\Delta x_n \coloneqq x_{n+1} - x_n$ for all $n \ge 0$.

\section{\texorpdfstring{Scaling limit of the peeling process on $\Map_\infty$}%
	{Scaling limit of the peeling process on M\_infinity}}
\label{sec:peeling_scaling_limits}

In this section we recall the edge-peeling process introduced in \cite{Budd:The_peeling_process_of_infinite_Boltzmann_planar_maps} and the connection with a random walk in the domain of attraction of the symmetric Cauchy process. We then prove an invariance principle for the perimeter and volume of a general peeling process. Later, we shall derive Theorems \ref{thm:intro_dual} and \ref{thm:intro_Eden} by applying these general results to well-chosen peeling procedures. The presentation and notation is inspired by \cite{Budd-Curien:Geometry_of_infinite_planar_maps_with_high_degrees,Curien:Peccot}, to which we refer for more details.

\subsection{\texorpdfstring{Filled-in explorations in $\Map_\infty$}%
	{Filled-in explorations in M\_infinity}}
\label{sec:def_peeling_general}

Recall that we consider rooted bipartite planar maps; such maps will be denoted by $\map$ or $\Map$ (keeping in mind that we are in fact interested in their duals  $\map^\dagger$ or $\Map^\dagger$) and that the root-face of  $\mathfrak{m}$ is the face adjacent on the right to the root-edge.

Let $ \mathfrak{m}$ be an infinite one-ended bipartite map (an assumption that $ \Map_{\infty}$ satisfies almost surely) a  \emph{peeling exploration}\footnote{Filled-in exploration in the language of \cite[Section 3.1.4]{Curien:Peccot}.} of $\map$ is an increasing sequence $(\overline{\mathfrak{e}}_i)_{i \ge 0}$ of sub-maps of $\map$ containing the root-edge such that $  \overline{\mathfrak{e}}_{i}$ has a distinguished simple face called the hole. By sub-map $ \overline{\mathfrak{e}}_{i} \subset  \mathfrak{m}$ we mean that we can recover $ \map$ from $ \overline{ \mathfrak{e}}_{i}$ by gluing inside the unique hole of $  \overline{ \mathfrak{e}}_{i}$ a bipartite planar map with a (not necessarily simple) boundary of perimeter matching that of the hole of $  \overline{\mathfrak{e}}_{i}$ (this map is uniquely defined). More precisely, a peeling exploration depends on an algorithm $ \mathcal{A}$ which associates with each sub-map $ \overline{ \mathfrak{e}}$ an edge on the boundary of its hole.\footnote{The algorithm can be deterministic or random, but in the latter case the randomness involved must be independent of the un-revealed part of the map.}  Then the peeling exploration of $ \mathfrak{m}$ with algorithm $ \mathcal{A}$ is the following sequence $(\overline{ \mathfrak{e}}_{i})_{i \ge 0}$ of sub-maps of $ \mathfrak{m}$. First $\overline{\mathfrak{e}}_0$ consists only of two simple faces with the same degree as the root-face of $\map$ and an oriented edge, the hole is the face on the left of this root-edge. Then for each $i \ge 0$, given $\overline{\mathfrak{e}}_i$, the sub-map $ \overline{ \mathfrak{e}}_{i+1}$ is obtained by peeling the edge $ \mathcal{A}( \overline{ \mathfrak{e}}_{i})$ in $ \mathfrak{m}$. When peeling an edge, there are two cases depicted in Figure \ref{fig:peeling_dual}:
\begin{enumerate}
\item\label{item:def_peeling_new_face} either the face in $  \mathfrak{m}$ on the other side of $ \mathcal{A}( \overline{ \mathfrak{e}}_{i})$ is not already present in $  \overline{ \mathfrak{e}}_{i}$; in this case $ \overline{ \mathfrak{e}}_{i+1}$ is obtained by adding this face to $ \overline{ \mathfrak{e}}_{i}$ glued onto $ \mathcal{A}( \overline{ \mathfrak{e}}_{i})$ and without performing any other identification of edges,
\item\label{item:def_peeling_gluing} or the other side of $ \mathcal{A}( \overline{ \mathfrak{e}}_{i})$ in $  \mathfrak{m}$ actually corresponds to a face already discovered in  $ \overline{ \mathfrak{e}}_{i}$. In this case $ \overline{ \mathfrak{e}}_{i+1}$ is obtained by performing the identification of the two edges in the hole of $ \overline{ \mathfrak{e}}_{i}$. This usually creates two holes, but since $ \mathfrak{m}$ is one-end, we decide to fill-in the one containing a finite part of $  \mathfrak{m}$. 
\end{enumerate}

\begin{figure}[!ht]
\begin{center}
\includegraphics[width=.8\linewidth]{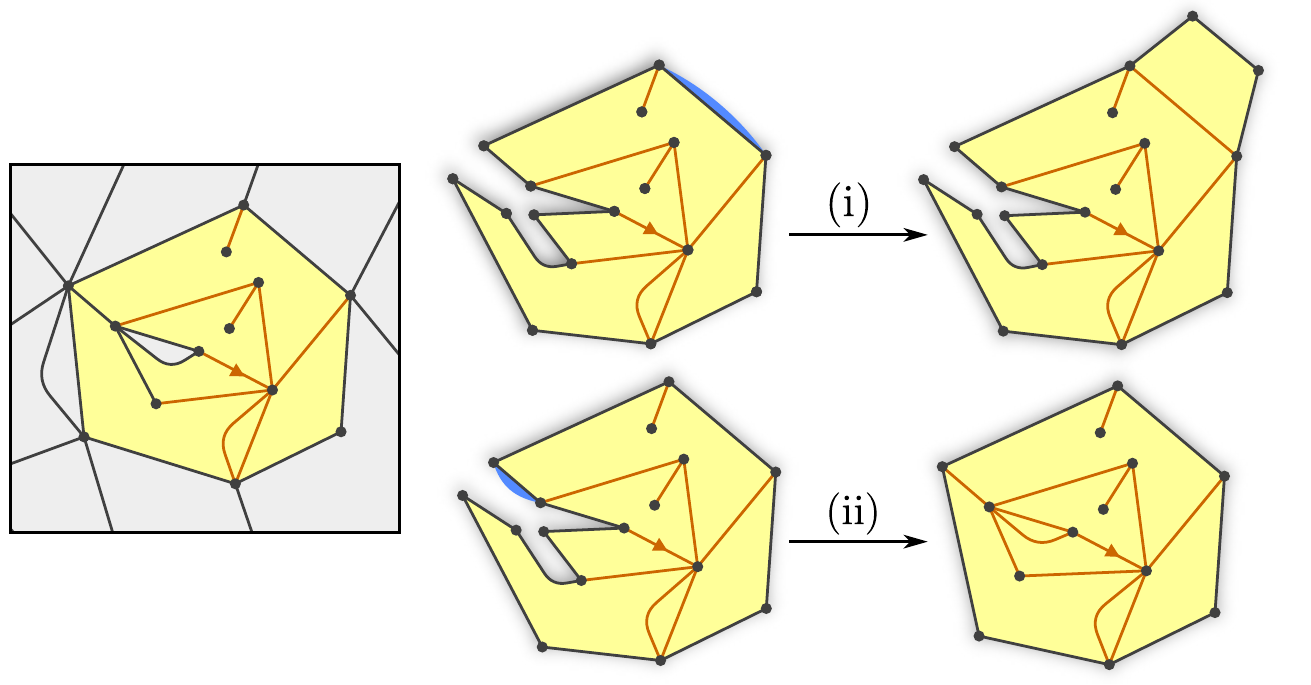}
\caption{Illustration of a (filled-in) peeling step in a one-ended bipartite map. The peeling edge is depicted in blue. In the first case we add a new face adjacent to this edge, and in the second case we identify two edges on the boundary of the hole; in the latter case, the hole is split into two components and we fill-in the finite one.
\label{fig:peeling_dual}}
\end{center}
\end{figure}

\subsection{The perimeter as a Doob transform}
\label{sec:Doob_transform}

We now recall the law of a peeling exploration of $ \Map_{\infty}$. See \cite{Budd:The_peeling_process_of_infinite_Boltzmann_planar_maps, Budd-Curien:Geometry_of_infinite_planar_maps_with_high_degrees, Curien:Peccot} for details.  By definition, if  $ \overline{ \mathfrak{e}} \subset \mathfrak{m}$ is a sub-map, then the perimeter $|\partial  \overline{ \mathfrak{e}}|$ is the number of edges on the boundary of its simple hole whereas its volume $|   \overline{ \mathfrak{e}}|$ is its number of inner vertices, i.e. the vertices not incident to the hole. To describe the transition probabilities of the Markov chain $( \overline{ \mathfrak{e}}_{i})_{i \geq 0}$ we need some enumeration results, see \cite[Section 3.3]{Borot-Bouttier-Guitter:A_recursive_approach_to_the_O_n_model_on_random_maps_via_nested_loops}, \cite[Section 2]{Le_Gall-Miermont:Scaling_limits_of_random_planar_maps_with_large_faces} and \cite{Budd:The_peeling_process_of_infinite_Boltzmann_planar_maps,Curien:Peccot}. 

For every $n, \ell \ge 1$, we denote by $\mathcal{M}^{(\ell)}_n$ the set of all rooted bipartite planar maps with $n$ vertices, such that the root-face (the face adjacent on the right of the root-edge) has degree $2 \ell$. We put $ \mathcal{M}^{(\ell)}= \bigcup_{n \geq 1} \mathcal{M}^{(\ell)}_{n}$. Observe that any bipartite map can be seen as a planar map with a root-face of degree $2$ by simply ``unzipping'' the root-edge. We shall always implicitly use this identification if necessary. We let
\[W_n^{(\ell)} = \sum_{\map \in \mathcal{M}^{(\ell)}_n} \prod_{\substack{f \in \mathsf{Faces}(\map) \\ f \ne \text{ root-face}}} q_{\deg(f)/2},
\qquad\text{and}\qquad
W^{(\ell)} = \sum_{n \ge 0} W_n^{(\ell)},\]
be the masses given respectively to the sets $\mathcal{M}_n^{(\ell)}$ and $\mathcal{M}^{(\ell)}$ by the measure $w$ defined in the introduction, with the slight change that we do not count the root-face anymore. 
Since $\q$ is admissible, $W^{(\ell)}$ is finite for every $\ell \ge 1$ and we let
\begin{equation}\label{eq:def_free_Boltzmann_law_with_boundary}
\P^{(\ell)}(\map) \coloneqq \frac{w(\map)}{W^{(\ell)}}
\qquad\text{for}\qquad
\map \in \mathcal{M}^{(\ell)}
\end{equation}
be the law of a finite Boltzmann rooted bipartite map with a root-face of degree $2\ell$. For $n, \ell \ge 1$, we may define a law $\P_n^{(\ell)}$ on $\mathcal{M}_n^{(\ell)}$ in a similar way; as recalled in the introduction, for each $\ell \ge 1$ fixed, these laws converge weakly as $n \to \infty$ for the local topology. We shall denote by $\P_\infty^{(\ell)}$ the limit, it is a distribution on the set of infinite rooted bipartite maps with a root-face of degree $2\ell$ and which are one-ended. With this notation, $\Map_\infty$ has law $\P_\infty^{(1)}$.

Recall that we assume that the weight sequence $\q$ is admissible and critical and satisfies \eqref{eq:asymptotic_q} with $a=2$, which imposes $c_\q$ and $\mathsf{p}_\q$ to be fine-tuned; we have then \cite[Chapter 5.1.3]{Curien:Peccot}
\[W^{(\ell)} \sim \frac{\mathsf{p}_\q}{2} \cdot c_\q^{\ell+1} \cdot \ell^{-2}
\qquad\text{as}\qquad \ell \to \infty.\]
As noticed in \cite{Budd:The_peeling_process_of_infinite_Boltzmann_planar_maps} the following function on $\Z$ 
\begin{equation}\label{eq:def_h_up}
h^\uparrow(\ell) \coloneqq 2\ell \cdot 2^{-2\ell} \cdot \binom{2\ell}{\ell} \cdot \ind{\ell \ge 1}
\end{equation}
will play a crucial role in connection with the probability measure $\nu$ on $\Z$ defined by
\begin{equation}\label{eq:def_nu}
\nu(k) \coloneqq
\begin{cases}
q_{k+1} \cdot c_\q^k & \text{for } k \geq 0 \\
2 \cdot W^{(-1-k)} \cdot c_\q^k & \text{for } k \leq -1.
\end{cases}
\end{equation}
The fact that $\nu$ is a probability distribution follows from the admissibility of the weight sequence $ \mathbf{q}$, see \cite[Lemma 9]{Curien:Peccot}. Notice also that $\nu$ characterises the weight sequence $ \mathbf{q}$ and that the previous asymptotic behaviour of $W^{(\ell)}$ together with \eqref{eq:asymptotic_q} yields
\begin{equation}\label{eq:tail_nu}
\nu(-k) \sim \nu(k) \sim \mathsf{p}_\q \cdot k^{-2} \qquad\text{as}\qquad k \to \infty.
\end{equation}
Furthermore, since $ \mathbf{q}$ is critical,  the function $h^\uparrow$ is (up to a multiplicative constant) the only non-zero harmonic function on $\N$ for the random walk with independent increments distributed according to $\nu$ (we say that $h^\uparrow$ is $\nu$-harmonic at these points) and that vanishes on $\Z_-$.\footnote{In fact, according to \cite[Section 3.2]{Budd:The_peeling_process_of_infinite_Boltzmann_planar_maps} the sequence $\q$ is admissible and critical \emph{if and only if} $\nu$ is a probability distribution and $h^\uparrow$ is $\nu$-harmonic on $\N$.} Let $S$ be a random walk with i.i.d. increments of law $\nu$ given in \eqref{eq:def_nu}; we may define its version $S^\uparrow$ conditioned to never hit $\Z_-$ via a Doob $h^\uparrow$-transform.

We now recall from \cite{Budd:The_peeling_process_of_infinite_Boltzmann_planar_maps} the law of a peeling exploration  $(\overline{\mathfrak{e}}_i)_{i \ge 0}$ of $\Map_\infty$ (for a fixed peeling algorithm $ \mathcal{A}$) as described in the previous subsection. Recall the two possible outcomes \ref{item:def_peeling_new_face} and \ref{item:def_peeling_gluing} from Section \ref{sec:def_peeling_general} when peeling an edge on the boundary of $ \overline{ \mathfrak{e}}_{i}$. Conditionally on the current exploration $\overline{\mathfrak{e}}_i$ and on the selected edge to peel $ \mathcal{A}( \overline{ \mathfrak{e}}_{i})$, if the perimeter of $\overline{\mathfrak{e}}_i$ is $2\ell$ for some $\ell \ge 1$, then the peeling of $ \mathcal{A}( \overline{ \mathfrak{e}}_{i})$ leads to a new face of degree $2k$ with probability
\begin{equation}\label{eq:qp}
p_{k}^{(\ell)} \coloneqq \nu(k-1) \frac{h^\uparrow(\ell+k-1)}{h^{\uparrow}(\ell)}
\qquad\text{for}\qquad k \ge 1.
\end{equation} 
Otherwise  $ \mathcal{A}( \overline{ \mathfrak{e}}_{i})$ is identified with another edge on the boundary, splitting the hole into two parts, only one contains the infinite part of the map. The probability that the hole created on the left of the peeled edge is finite and has perimeter $2k$ with $k \geq 0$ is
\begin{equation}\label{eq:qp2}
p_{-k}^{(\ell)} \coloneqq \frac{1}{2}\nu(-k-1) \frac{h^\uparrow(\ell-k-1)}{h^{\uparrow}(\ell)}
\qquad\text{for}\qquad 0 \le k \le \ell-2,
\end{equation}
and similarly when ``left'' is replaced by ``right''. On these events, the finite holes created are filled-in with an independent map of law $ \mathbb{P}^{(k)}$.  Notice that $\sum_{k=1}^\infty p_k^{(\ell)} + 2\sum_{k=0}^{\ell-2} p_{-k}^{(\ell)}=1$ is ensured precisely because $h^\uparrow$ is $\nu$-harmonic. Reformulating the above transitions we have:

\begin{lem}[\cite{Budd:The_peeling_process_of_infinite_Boltzmann_planar_maps}]\label{lem:budd15}
Let $(\overline{\mathfrak{e}}_i)_{i \ge 0}$ be a peeling exploration of $\Map_\infty$ and for every $i \ge 0$, let $P_i = \frac{1}{2} |\partial \overline{\mathfrak{e}}_i|$ be the half-perimeter of $\overline{\mathfrak{e}}_i$ and $V_i = |\overline{\mathfrak{e}}_i|$ be its volume. Then $(P_i, V_i)_{i \ge 0}$ is a Markov chain whose law does not depend on the peeling algorithm $\mathcal{A}$. More precisely,
\begin{itemize}
\item $(P_i)_{i \ge 0}$ has the same law as $S^\uparrow$ the random walk started from $1$ and with i.i.d.\, increments of law $\nu$ given in \eqref{eq:def_nu} conditioned to never hit $\Z_-$.
\item Conditional on $(P_i)_{i \ge 0}$, the random variables $(V_{i+1} - V_i)_{i \ge 0}$ are independent, each $V_{i+1} - V_i$ is null if $P_i - P_{i+1}-1 \le 0$, otherwise it is distributed as the volume of a map sampled from $\P^{(\ell)}$ defined in \eqref{eq:def_free_Boltzmann_law_with_boundary}, where $\ell = P_i - P_{i+1}-1$. 
\end{itemize}
\end{lem}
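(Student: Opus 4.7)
The statement is essentially a repackaging of the one-step transition probabilities displayed in~\eqref{eq:qp}--\eqref{eq:qp2}, which were established in~\cite{Budd:The_peeling_process_of_infinite_Boltzmann_planar_maps}; the plan is to separate the three assertions of the lemma and to check them in turn.

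Writing $X_i := P_{i+1} - P_i$, and observing that the factor $\frac{1}{2}$ in $p_{-k}^{(\ell)}$ is compensated by summing over the symmetric left/right choice, formulas~\eqref{eq:qp}--\eqref{eq:qp2} collapse into the single identity
\[\Prc{X_i = j}{\overline{\mathfrak{e}}_0, \dots, \overline{\mathfrak{e}}_i} \;=\; \nu(j)\, \frac{h^\uparrow(P_i + j)}{h^\uparrow(P_i)}, \qquad j \in \Z.\]
The right-hand side depends on the past only through the half-perimeter $P_i$, not on the peeled edge $\mathcal{A}(\overline{\mathfrak{e}}_i)$; this at once yields the Markov property of $(\overline{\mathfrak{e}}_i)$, the algorithm-independence of the law of $(P_i)$, and, since $h^\uparrow$ is $\nu$-harmonic on $\N$ and vanishes on $\Z_-$, the identification of $(P_i)$ with the $h^\uparrow$-Doob transform of the $\nu$-random walk started from $P_0 = 1$, i.e. with $S^\uparrow$. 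For the volume, in case~\ref{item:def_peeling_new_face} no vertex gets detached from the hole, so $V_{i+1} - V_i = 0$, which matches exactly the condition $X_i \ge -1$, i.e.\ $P_i - P_{i+1} - 1 \le 0$. In case~\ref{item:def_peeling_gluing} with $X_i = -\ell - 1$ for some $\ell \ge 1$, the finite hole of perimeter $2\ell$ is filled in by a random bipartite map which entirely contributes to $V_{i+1} - V_i$.

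The crucial input is then the spatial Markov property of the Boltzmann distribution: conditionally on $\overline{\mathfrak{e}}_i$ and on being in case~\ref{item:def_peeling_gluing} with a hole of perimeter $2\ell$, the filled-in map is independent of everything else and distributed according to $\P^{(\ell)}$. I would derive this first in the finite setting directly from the product form $w(\map) = \prod_{f} q_{\deg(f)/2}$: conditionally on any revealed sub-map with a hole of perimeter $2\ell$, the complementary map is Boltzmann-distributed on $\mathcal{M}^{(\ell)}$ with the remaining vertex budget. Passing to the local limit $n \to \infty$ then turns the ratios $W_n^{(\ell')}/W_n^{(\ell)}$ of partition functions into the Doob ratios $h^\uparrow(\ell')/h^\uparrow(\ell)$ and, since finite filled-in regions only use a bounded number of vertices, identifies their limiting law as $\P^{(\ell)}$ rather than some conditioned version of it. The only mildly delicate point is verifying that this limiting identification respects the iterative independence structure across successive peeling steps, which follows from the strong Markov property of the local-limit construction of $\Map_\infty$ as in~\cite{Budd:The_peeling_process_of_infinite_Boltzmann_planar_maps,Curien:Peccot} and, once established, immediately gives both the Markov character of the whole pair $(P_i,V_i)$ and the conditional independence of the volume increments given $(P_i)$.
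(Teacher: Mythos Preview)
The paper does not give its own proof of this lemma: it is cited from \cite{Budd:The_peeling_process_of_infinite_Boltzmann_planar_maps}, and the surrounding text simply displays the transition probabilities~\eqref{eq:qp}--\eqref{eq:qp2} and then states the lemma as ``reformulating the above transitions''. Your derivation is exactly that reformulation made explicit, and it is correct in substance and in spirit.

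One small slip: case~\ref{item:def_peeling_new_face} yields $X_i \ge 0$, not $X_i \ge -1$. The borderline value $X_i = -1$ comes from case~\ref{item:def_peeling_gluing} with a finite hole of perimeter $0$ (two adjacent boundary edges getting identified), so to fully cover the condition $P_i - P_{i+1} - 1 \le 0$ you should also note that this degenerate gluing contributes no $\P^{(\ell)}$-sample with $\ell \ge 1$. This does not affect the argument.
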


Let us point out that $P_0 = 1$ comes from our convention that every rooted bipartite map can be seen as a planar map with a root-face of degree $2$.

\subsection{\texorpdfstring{\emph{Intermezzo}: The map $\Map^{(\infty)}$, a half-plane version of $\Map_\infty$}%
	{Intermezzo: The map M\^{}(infinity), a half-plane version of M\_infinity}}
\label{sec:half_plane_map}

Let us next briefly introduce another model of infinite random planar maps which we shall consider in Sections \ref{sec:lower_bound_height} and \ref{sec:perco}. We refer to \cite[Chapter 4.1]{Curien:Peccot} for details. The laws $\P^{(\ell)}$ converge weakly for the local topology as $\ell \to \infty$; we shall denote by $\P^{(\infty)}$ the limit, which is now a distribution on the set of one-ended rooted bipartite maps with a root-face of infinite degree. These are commonly referred to a maps of the half-plane. We may consider a peeling process $(\overline{\mathfrak{e}}_i)_{i \ge 0}$ on such a map, and analogs of \eqref{eq:qp} and \eqref{eq:qp2} hold if the perimeter of the hole is understood here in terms of algebraic variation of the number of edges with respect to the initial state (so it can be negative).

At each step $i \ge 0$, conditional on the current exploration $\overline{\mathfrak{e}}_i$, once an edge is selected, its peeling 
\begin{enumerate}
\item either leads to a new face of degree $2k$ with $k \ge 1$ with probability $\nu(k-1)$;
\item or the edge is identified with another edge on the boundary to its right or to its left, thus creating a new finite hole of perimeter $2k$ with probability $\nu(-k-1)/2$ where $k \geq 0$. Conditionally on this event the finite hole is filled-in with an independent map of law $ \mathbb{P}^{(k)}$.
\end{enumerate}

Recall Lemma \ref{lem:budd15}, the variation of the half-perimeter process associated with any peeling exploration of a random map $\Map^{(\infty)}$ sampled from $\P^{(\infty)}$ has now simply the law of $S$, the unconditioned random walk started from $0$ and with i.i.d. increments of law $\nu$ given in \eqref{eq:def_nu}.

\subsection{Scaling limits for the perimeter and volume process}
We now give the scaling limit of the perimeter and volume process in a peeling exploration of $ \Map_{\infty}$. The result and its proof are similar to \cite[Theorem 3.6]{Budd-Curien:Geometry_of_infinite_planar_maps_with_high_degrees} although we need additional ingredients to tackle the particular case of the Cauchy process. Let us first introduce the limiting processes. \medskip

Let $\Upsilon = (\Upsilon_t ; t \ge 0)$ be \emph{the symmetric Cauchy process}: $\Upsilon$ is a Lévy process with no drift, no Brownian part and with Lévy measure normalised to $\Pi(\d x) = |x|^{-2} \ind{x \ne 0} \d x$, so that $\E[\ex^{\i \lambda \Upsilon_t}] = \ex^{-t \pi |\lambda|}$ for every $t > 0$ and $\lambda \in \R$. Just as the random walk $S^\uparrow$ conditioned to never hit $\Z_-$, we may define $\Upsilon^\uparrow$ to be a version of $\Upsilon$ conditioned to remain positive via a Doob $h$-transform, using the harmonic function $h : x \mapsto \sqrt{x}$, see e.g. Caravenna \& Chaumont \cite[Section 1.2]{Caravenna-Chaumont:Invariance_principles_for_random_walks_conditioned_to_stay_positive}.

To construct the scaling limit of the volume process we proceed as in  \cite[Section 3.3]{Budd-Curien:Geometry_of_infinite_planar_maps_with_high_degrees}: We first consider $\xi_\bullet$ a positive $2/3$-stable random variable with Laplace transform
\[\Es{\ex^{- \lambda \xi_\bullet}} = \exp\left(- \big(\Gamma(5/2) \lambda \big)^{2/3}\right).\]
Since $\E[\xi_\bullet^{-1}] = \int_0^\infty \Gamma(5/2)^{-1} \exp(-x^{2/3}) \d x = 1$, one can define a random variable $\xi$, with mean $\E[\xi]=1$, by
\[\Es{f(\xi)} = \Es{\xi_\bullet^{-1} f(\xi_\bullet)},\]
for every non-negative and measurable function $f$. Let $(\xi^{(i)})_{i \ge 1}$ be a sequence of independent random variables distributed as $\xi$ and let $\chi = (\chi_t)_{t \ge 0}$ be a \cadlag{} process independent of the sequence $(\xi^{(i)})_{i \ge 1}$. We define then another process $\mathcal{V}(\chi) = (\mathcal{V}(\chi)(t))_{t \ge 0}$ by
\[\mathcal{V}(\chi)(t) = \sum_{t_i \le t} \xi^{(i)} \cdot |\Delta \chi(t_i)|^{3/2} \cdot \ind{\Delta \chi(t_i) < 0},\]
where $t_{1}, t_{2}, \ldots$ is a measurable enumeration of the jump times of $ \chi$. In general, the process $\mathcal{V}(\chi)$ above may be infinite, but since $x \mapsto x^{3/2} \ind{x <0}$ integrates the L\'evy measure of $\Upsilon$ in the neighbourhood of $0$ it is easy to check that $\mathcal{V}(\Upsilon)$ is a.s. finite. It can be shown using absolute continuity relations between $\Upsilon$ and $\Upsilon^\uparrow$ that the process $\mathcal{V}(\Upsilon^\uparrow)$ is also almost surely finite.

\begin{thm}[General peeling growth]\label{thm:convergence_processes_perimeter_volume}
Let $(P_i, V_i)_{i \ge 0}$ be respectively the half-perimeter and the number of inner vertices in a peeling exploration of $\Map_\infty$. We have the following convergence in distribution in the sense of Skorokhod
\[\left(n^{-1} P_{[nt]}, n^{-3/2} V_{[nt]}\right)_{t \ge 0}
\cvdist
\left(\mathsf{p}_\q \cdot \Upsilon^\uparrow(t), \mathsf{v}_\q \cdot \mathcal{V}(\Upsilon^\uparrow)(t)\right)_{t \ge 0},\]
where $\mathsf{v}_\q = \frac{2}{c_\q} (\frac{\mathsf{p}_\q}{\pi})^{1/2}$.
\end{thm}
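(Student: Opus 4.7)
The plan is to treat the perimeter process first, then use its conditional structure to handle the volume, following the same blueprint as \cite[Theorem 3.6]{Budd-Curien:Geometry_of_infinite_planar_maps_with_high_degrees} but accommodating the specific features of the symmetric Cauchy regime.

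\textbf{Perimeter.} By Lemma \ref{lem:budd15}, $(P_i)_{i\ge 0}$ is distributed as the random walk $S$ with step law $\nu$ started from $1$, Doob $h^\uparrow$-transformed to stay positive. From \eqref{eq:tail_nu}, $\nu$ has symmetric tails $\nu(\pm k) \sim \mathsf{p}_\q k^{-2}$, so $\nu$ is in the domain of attraction of a symmetric Cauchy law; one checks that under criticality of $\mathbf{q}$ the asymptotic truncated mean $b$ of \ref{prop:tail_first_ladder_time_Cauchy_walk_symmetric} vanishes, which gives the unconditioned convergence
\[\left(n^{-1} S_{[nt]}\right)_{t \ge 0}\cvdist \mathsf{p}_\q \cdot \Cau.\]
With $b=0$, Proposition \ref{prop:intro_tail_first_ladder_time_Cauchy_walk}(iii) gives $\rho = \frac{1}{2}$, which is exactly the persistence exponent of $\Caupo$ (whose harmonic function is $h(x)=\sqrt{x}$). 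Hence the Caravenna–Chaumont invariance principle \cite{Caravenna-Chaumont:Invariance_principles_for_random_walks_conditioned_to_stay_positive} applies and yields
\[\left(n^{-1} P_{[nt]}\right)_{t \ge 0} \cvdist \mathsf{p}_\q \cdot \Caupo.\]

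\textbf{Volume.} Conditionally on $(P_i)$, Lemma \ref{lem:budd15} expresses $V_{[nt]}$ as a sum of independent Boltzmann volumes indexed by downward peeling steps, the contribution of a step with $P_i - P_{i+1} - 1 = \ell \ge 1$ being distributed as the volume of a map under $\P^{(\ell)}$. Using the enumeration input recalled in Section \ref{sec:Doob_transform}, in particular $W^{(\ell)} \sim \frac{\mathsf{p}_\q}{2} c_\q^{\ell+1} \ell^{-2}$ and \eqref{eq:tail_nu}, a saddle-point/tilt argument in the spirit of \cite[Lemma 3.7]{Budd-Curien:Geometry_of_infinite_planar_maps_with_high_degrees} shows that under $\P^{(\ell)}$,
\[\ell^{-3/2} |\mathfrak{m}| \cvdist[\ell] c \cdot \xi_\bullet,\]
with an explicit $c>0$, the $2/3$-stable scaling being forced by $W^{(\ell)} \sim \ell^{-2}$ together with the stable index $a=2$.

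\textbf{Joint convergence.} Combining Steps~1 and~2 with the conditional independence of Lemma \ref{lem:budd15}, one writes $n^{-3/2} V_{[nt]}$ as a shot-noise functional of the rescaled negative jumps of $P$, each marked with an independent copy of $\xi_\bullet$ and weighted by the $3/2$-power of the jump size. As $x \mapsto |x|^{3/2}\ind{x<0}$ integrates the Lévy measure of $\Cau$ near $0$, the functional $\chi \mapsto \mathcal{V}(\chi)$ is continuous at $\mathsf{p}_\q \Caupo$ along paths with finitely many macroscopic jumps, which suffices to promote marginal to joint Skorokhod convergence. The transition $\xi_\bullet \mapsto \xi$ in the limit comes from the classical Campbell/size-biasing identity: a uniformly chosen jump of a stable process has a size-biased distribution relative to the Lévy intensity, and the identity $\Es{f(\xi)} = \Es{\xi_\bullet^{-1} f(\xi_\bullet)}$ is precisely the factor that turns the $\ell$-indexed marginals of Step~2 into $\xi$ in the limit; tracking constants gives $c = \mathsf{v}_\q$, completing the proof.

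\textbf{Main obstacle.} The only serious difficulty is the perimeter invariance principle: in the Cauchy case, one cannot simply invoke the classical Skorokhod continuity of the ``conditioning to stay positive'' map as in the $a\ne 2$ regime of \cite{Budd-Curien:Geometry_of_infinite_planar_maps_with_high_degrees}, because the renewal function of the descending ladder heights of $S$ is no longer regularly varying with a definite index; the content of Proposition \ref{prop:intro_tail_first_ladder_time_Cauchy_walk} is exactly the sharp two-sided control needed to feed Caravenna–Chaumont, and verifying $b=0$ for $\nu$ (and hence $\rho = 1/2$) is what pins the harmonic function for the limit to $\sqrt{\cdot}$. Everything else is a routine transposition of the arguments in \cite{Budd-Curien:Geometry_of_infinite_planar_maps_with_high_degrees}.
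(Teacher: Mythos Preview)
Your blueprint is the same as the paper's: reduce everything to the unconditioned invariance principle $(n^{-1}S_{[nt]})_{t\ge 0}\Rightarrow \mathsf{p}_\q\Upsilon$, then invoke Caravenna--Chaumont for $S^\uparrow$ and transplant the volume argument from \cite[Section~3.3]{Budd-Curien:Geometry_of_infinite_planar_maps_with_high_degrees} verbatim. That much is correct, and once the centring $b_n\to 0$ is known the rest is indeed routine.

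The gap is precisely at ``one checks that under criticality of $\mathbf{q}$ the asymptotic truncated mean $b$ vanishes''. You do not check this, and the paper's point is that one \emph{cannot} check it by direct computation: the tails \eqref{eq:tail_nu} alone do not determine the limit of $\Es{S_1/(1+(S_1/n)^2)}$, and no explicit formula for $\nu$ is available beyond its asymptotics. The paper's actual argument (Proposition~\ref{prop:convergence_walk_Cauchy}) is to exploit the explicit harmonic function $h^\uparrow(\ell)=2\ell\,2^{-2\ell}\binom{2\ell}{\ell}$: any subsequential limit $b\in[-\infty,\infty]$ of $b_n$ would force $x\mapsto\sqrt{x}$ to be harmonic for the drifted Cauchy process $\Upsilon+bt$, and this is shown to be impossible unless $b=0$, via optional stopping and a uniform-integrability control on the largest jump. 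Your ``Main obstacle'' paragraph also misidentifies the tool: Proposition~\ref{prop:intro_tail_first_ladder_time_Cauchy_walk} is not used at all in the proof of Theorem~\ref{thm:convergence_processes_perimeter_volume} (it enters only in Section~\ref{sec:perco} for percolation), and in any case part~(iii) \emph{assumes} that $b$ exists rather than supplying it. Once $b_n\to 0$ is established, $\P(S_n>0)\to\tfrac12$ is immediate from the symmetry of $\Upsilon$ and Caravenna--Chaumont applies without further ladder-height input.
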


\begin{figure}[!ht]
\begin{center}
\includegraphics[width=.6\linewidth]{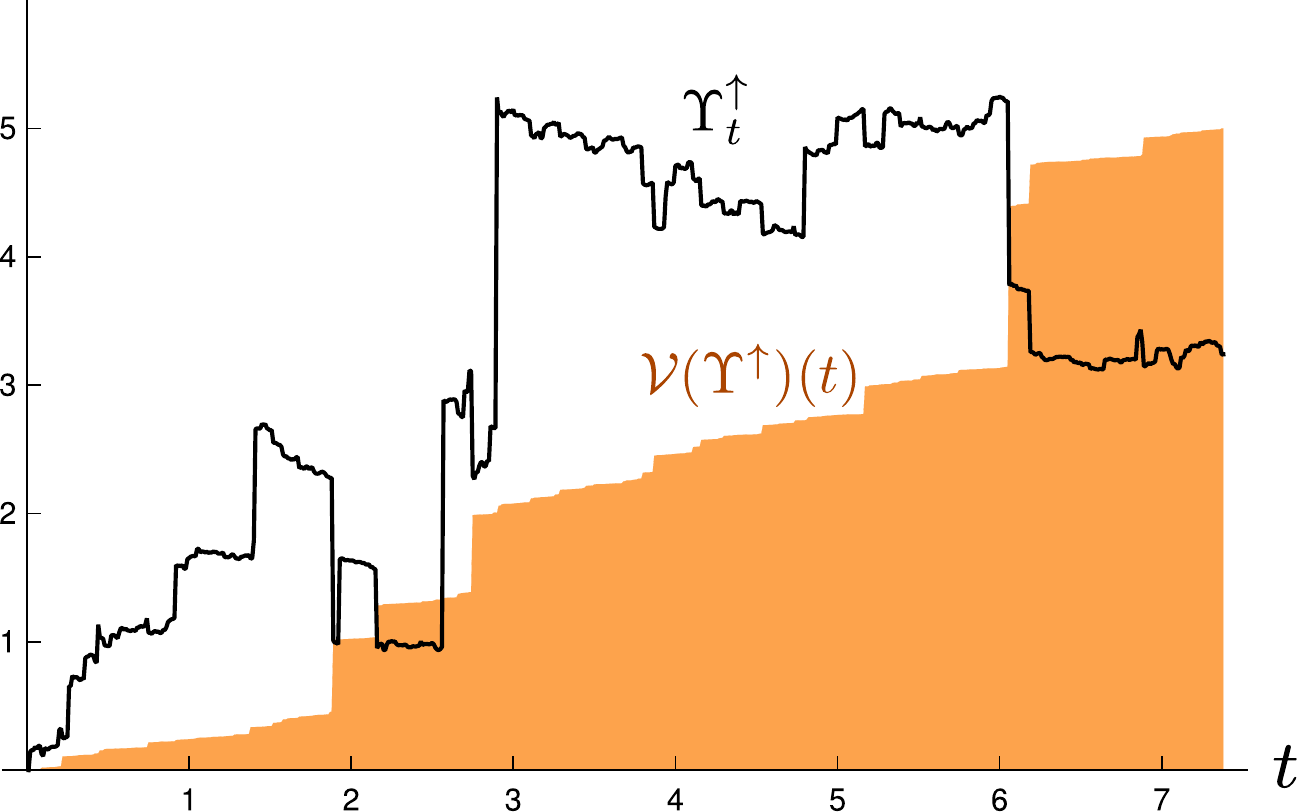}
\caption{Simulation of the processes $\Upsilon^\uparrow$ and $\mathcal{V}(\Upsilon^\uparrow)$.}
\end{center}
\end{figure}

\begin{proof}
Recall that $S$ is a random walk with i.i.d. steps sampled from $\nu$ given in \eqref{eq:def_nu} and that $P$ has the law of $S^\uparrow$. A result similar to our claim for $a \ne 2$ was proved in \cite[Section 3.3]{Budd-Curien:Geometry_of_infinite_planar_maps_with_high_degrees}; the arguments there are based on the convergence in distribution of $(n^{-1/(a-1)} S_{[nt]})_{t \ge 0}$ towards a certain $(a-1)$-stable Lévy process\footnote{Let us point out that the convergence of $S^\uparrow$ given that of $S$ is due to Caravenna \& Chaumont \cite{Caravenna-Chaumont:Invariance_principles_for_random_walks_conditioned_to_stay_positive}.} and they all carry through without modification for $a=2$. Therefore, we only need to prove the corresponding invariance principle for $S$ in the case $a=2$, which is the claim of Proposition \ref{prop:convergence_walk_Cauchy} below.
\end{proof}

\begin{rem}\label{rem:general_peeling_vertices_faces}
Echoing Remark 3.5 in \cite{Budd-Curien:Geometry_of_infinite_planar_maps_with_high_degrees}, we could have chosen the number of faces in a peeling exploration of $\Map_\infty$ as the notion of volume; the only effect would have been to replace the constant $\mathsf{v}_\q$ by $\mathsf{v}_\q' = \frac{1}{2} (1 - \frac{4}{c_\q}) (\frac{\mathsf{p}_\q}{\pi})^{1/2}$.
\end{rem}

Recall that $\Upsilon$ is a symmetric Cauchy process and $S$ is a random walk with increments of law $\nu$.

\begin{prop}\label{prop:convergence_walk_Cauchy}
The following convergence in distribution holds for the Skorokhod topology:
\[\left(n^{-1} S_{[nt]}\right)_{t \ge 0} \cvdist (\mathsf{p}_\q \cdot \Upsilon_t)_{t \geq 0}.\]
\end{prop}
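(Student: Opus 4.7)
The plan is to reduce the problem to a characteristic function computation and then invoke a classical invariance principle. Since the increments of $S$ are i.i.d., the convergence of the rescaled process $(n^{-1} S_{[nt]})_{t \ge 0}$ towards a Lévy process is equivalent to the one-dimensional convergence $n^{-1} S_n \to \mathsf{p}_\q \Upsilon_1$ (see e.g.\ Bertoin's book on Lévy processes or Jacod \& Shiryaev), which in turn reduces to showing
\[
n\bigl(\phi(\lambda/n) - 1\bigr) \xrightarrow[n\to\infty]{} -\pi \mathsf{p}_\q |\lambda| \qquad \text{for every } \lambda \in \R,
\]
where $\phi(\lambda) = \sum_{k \in \Z} \ex^{\i \lambda k} \nu(k)$ is the characteristic function of $\nu$.

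I would split $\phi(\lambda/n) - 1 = A_n(\lambda) + \i B_n(\lambda)$ into real and imaginary parts. The real part
\[
A_n(\lambda) \enskip=\enskip \sum_k \bigl(\cos(\lambda k /n) - 1\bigr) \nu(k)
\]
is handled via Riemann sum approximation using the tail $\nu(\pm k) \sim \mathsf{p}_\q k^{-2}$ from \eqref{eq:tail_nu}. Concretely, splitting at $|k| \le \varepsilon n$ (where $|\cos(\lambda k/n) - 1| \le \lambda^2 k^2 /(2 n^2)$) and $|k| > \varepsilon n$ (where $\nu(k) = O(1/k^2)$), dominated convergence yields
\[
n A_n(\lambda) \enskip\longrightarrow\enskip \mathsf{p}_\q \int_\R \frac{\cos(\lambda y) - 1}{y^2}\, \d y \enskip=\enskip -\pi \mathsf{p}_\q |\lambda|.
\]

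The main obstacle is the imaginary part
\[
B_n(\lambda) \enskip=\enskip \sum_{k \ge 1} \sin(\lambda k /n) \bigl(\nu(k) - \nu(-k)\bigr).
\]
For a walk attracted to a Cauchy process ($\alpha = 1$), centering is genuinely non-trivial and the asymptotic symmetry $\nu(k) \sim \nu(-k)$ alone is a priori insufficient to force $n B_n(\lambda) \to 0$. To overcome this I would exploit the explicit form \eqref{eq:def_nu}: for $k \ge 1$,
\[
\nu(k) - \nu(-k) \enskip=\enskip q_{k+1} c_\q^k - 2 W^{(k-1)} c_\q^{-k},
\]
together with the $\nu$-harmonicity of $h^\uparrow$ on $\N$ coming from the critical admissibility of $\q$. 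Expanding $h^\uparrow(j) \sim \sqrt{j/\pi}$ in the relation $\sum_k \nu(k) h^\uparrow(\ell + k) = h^\uparrow(\ell)$ for large $\ell$ translates into a cancellation of the form $\sum_{|k| \le N} k \bigl(\nu(k) - \nu(-k)\bigr) = o(N)$ as $N \to \infty$. Combined with the standard decomposition of $B_n(\lambda)$ at $|k| \approx n/|\lambda|$ (using $|\sin \theta| \le |\theta|$ for small $\theta$ and $|\sin \theta| \le 1$ otherwise), this gives $n B_n(\lambda) \to 0$.

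Assembling the two estimates yields $n \log \phi(\lambda/n) \sim n(\phi(\lambda/n)-1) \to -\pi \mathsf{p}_\q |\lambda|$, which is the characteristic exponent of $\mathsf{p}_\q \Upsilon_1$. The Skorokhod convergence of $(n^{-1} S_{[nt]})_{t \ge 0}$ then follows by the classical functional invariance principle for i.i.d.\ sums in the domain of attraction of a Lévy process (tightness being automatic from independence and stationarity of the increments). The hard part is the quantitative control of the asymmetry $\nu(k) - \nu(-k)$: this is precisely the novelty of the critical case $a = 2$ compared with the strictly sub- or super-critical cases $a \in (3/2, 5/2) \setminus \{2\}$ treated in \cite{Budd-Curien:Geometry_of_infinite_planar_maps_with_high_degrees}, and explains why the paper devotes additional effort to random walk estimates in the Cauchy regime.
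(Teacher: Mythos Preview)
Your approach via the characteristic function is a legitimate alternative to the paper's argument, and you correctly identify that the whole difficulty lies in the imaginary part $B_n(\lambda)$ --- equivalently, in showing that the centering constant can be taken to be $0$. Both you and the paper recognise the $\nu$-harmonicity of $h^\uparrow$ as the key structural input. However, there is a genuine gap in your treatment of $B_n$.

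The cancellation you state, namely $\sum_{|k| \le N} k\bigl(\nu(k)-\nu(-k)\bigr) = o(N)$, is both too weak and does not actually use harmonicity. It follows for free from the asymptotic symmetry $\nu(k) \sim \nu(-k) \sim \mathsf{p}_\q k^{-2}$ in \eqref{eq:tail_nu}, which gives $k\bigl(\nu(k)-\nu(-k)\bigr) = o(1/k)$ and hence partial sums that are even $o(\log N)$. But this is insufficient: a short computation (expanding $\sin(\lambda k/n)=\lambda k/n + O((k/n)^3)$ for $k\le n$ and bounding the tail $k>n$) yields
\[
n B_n(\lambda) \;=\; \lambda \sum_{k \le n} k\bigl(\nu(k)-\nu(-k)\bigr) + o(1),
\]
so what you really need is that these partial sums converge to $0$, not merely that they are $o(N)$. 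A hypothetical asymmetry of the form $\nu(k)-\nu(-k) \sim c/(k^2 \log k)$ would satisfy your stated bound yet give $n B_n(\lambda) \sim c\lambda \log\log n \to \infty$. Extracting the genuinely required statement from the harmonicity relation by Taylor-expanding $h^\uparrow(\ell+k)\approx 2\sqrt{(\ell+k)/\pi}$ is delicate, because the contributions from $|k|$ comparable to or larger than $\ell$ enter at the same order as the leading term and do not obviously cancel.

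The paper sidesteps this expansion entirely. Its argument is probabilistic: after noting from the tails \eqref{eq:tail_nu} that $n^{-1}S_n - b_n$ converges to a symmetric Cauchy law for some sequence $b_n$, it proves $b_n \to 0$ by contradiction. Starting the walk from $n$ and applying optional stopping to the martingale $h^\uparrow(S_{\cdot\wedge\sigma})$, with $\sigma$ the exit time from $[n/2,2n]$, gives $\E_n[h^\uparrow(S_{n\wedge\sigma})] = h^\uparrow(n)$. If $b_n\to+\infty$ along a subsequence, the walk exits above $2n$ with high probability, forcing $h^\uparrow(n) \ge h^\uparrow(2n)(1+o(1))$, which contradicts $h^\uparrow(m)\sim 2\sqrt{m/\pi}$. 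If $b_n\to-\infty$, a uniform-integrability argument controlling the maximal jump yields the symmetric contradiction. If $b_n\to b$ finite, one passes to the limit and shows that $x\mapsto\sqrt{x}\,\ind{x>0}$ would be harmonic for the Cauchy process with drift $b$; since it is already harmonic for the driftless process, this forces $b=0$. This optional-stopping route converts harmonicity into pathwise information and avoids any fine pointwise control on $\nu(k)-\nu(-k)$; it is the main technical idea you are missing.
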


As the proof will show, it is easy to see that for some centring sequence $(b_n)_{n \ge 1}$, we have the convergence in distribution $(n^{-1} S_{[nt]} - b_n t)_{t \ge 0} \to (\mathsf{p}_\q \Upsilon_t)_{t \geq 0}$. The main point is to prove that $b_n$ can be set to $0$; this was rather simple in \cite{Budd-Curien:Geometry_of_infinite_planar_maps_with_high_degrees} in the case $a \ne 2$ but is more involved here. The key idea is to use the fact that the explicit function $ h^{\uparrow}$ is a harmonic function for the walk $S$ killed when entering $\Z_{-}$.

\begin{proof}
By a classical result on random walks (see e.g. Jacod
\& Shiryaev \cite[Chapter VII]{Jacod-Shiryaev:Limit_theorems_for_stochastic_processes}), it is equivalent to show the convergence in distribution
\[n^{-1} S_n \cvdist \mathsf{p}_\q \Upsilon_1.\]
First, it is easy to see from its tail behaviour \eqref{eq:tail_nu} that $\nu$ belongs to the domain of attraction of a symmetric stable law with index one, i.e. there exists two sequences $(a_n)_{n \ge 1}$ and $(b_n)_{n \ge 1}$ such that $a_n^{-1} S_n - b_n$ converges towards some symmetric Cauchy distribution $C_1$, see e.g. \cite[Theorem 8.3.1]{Bingham-Goldie-Teugels:Regular_variation}. As above, it follows that the convergence in distribution
\begin{equation}\label{eq:cv_walk_cauchy}
\left(a_n^{-1} S_{[nt]} - b_n t\right)_{t \ge 0} \cvdist (C_t)_{t \geq 0}
\end{equation}
holds for the Skorokhod topology, where $C = (C_t)_{t \geq 0}$ is some symmetric Cauchy process. Furthermore, one sees from the tails of $\nu$ that one can take $a_n=n$ in \eqref{eq:cv_walk_cauchy}; we aim at showing that $b_n$ can be set to $0$. A straightforward calculation using the jump measure then shows that $C$ has the same law as $\mathsf{p}_\q \Upsilon$.

In the remainder of this proof, we assume that \eqref{eq:cv_walk_cauchy} holds with $a_n=n$ and we show that $b_n \to 0$ as $n \to \infty$. According to \cite[Theorem 8.3.1]{Bingham-Goldie-Teugels:Regular_variation}, one may take $b_n = \Es{\frac{S_1}{1+(S_1/n)^2}}$ but we cannot perform bare-hand calculations and so we use the explicit harmonic function $h^\uparrow$. We shall prove by contradiction that there is no sequence of integers along which $b_n$ converges to an element of $[-\infty, 0[ \cup ]0, \infty]$; we shall treat the three cases $\infty$, $-\infty$ and $]-\infty, 0[ \cup ]0, \infty[$ separately and drop the mention ``along a sequence of integers'' to simplify the notation. 

We let $S_0=n$ and define a stopping time $\sigma = \inf\{k \ge 1 : S_k \notin [n/2, 2n]\}$. We denote by $\P_n$ the law of $S$ started from $n$ and similarly by $\P_1$ the law of $\Upsilon$ started from $1$. Since $h^\uparrow$ is $\nu$-harmonic on $\N$, we have
\begin{equation}\label{eq:stopping_thm}
\E_n\left[h^\uparrow(S_{n \wedge \sigma})\right] = h^\uparrow(n).
\end{equation}

Let us first suppose that $b_n \to \infty$, then, by \eqref{eq:cv_walk_cauchy}, we have $\P_n(S_{n \wedge \sigma} > 2n) \to 1$. Observe that $h^\uparrow$ is non-decreasing and that $h^\uparrow(n) \sim 2 \sqrt{n/\pi}$ as $n \to \infty$; appealing to \eqref{eq:stopping_thm}, it follows that
\[h^\uparrow(n)
\ge \E_n\left[h^\uparrow(S_{n \wedge \sigma}) \ind{S_{n \wedge \sigma} > 2n}\right]
\ge h^\uparrow(2n) (1+o(1)),\]
which leads to a contradiction when letting $n \to \infty$.

Suppose next that $b_n \to -\infty$, then, by \eqref{eq:cv_walk_cauchy} again, we have $\P_n(S_{n \wedge \sigma} < n/2) \to 1$. We write \eqref{eq:stopping_thm} as
\[h^\uparrow(n)
= \E_n\left[h^\uparrow(S_{n \wedge \sigma}) \ind{S_{n \wedge \sigma} < n/2}\right] + \E_n\left[h^\uparrow(S_{n \wedge \sigma}) \ind{S_{n \wedge \sigma} \ge n/2}\right].\]
The first term on the right-hand side is bounded above by $h^\uparrow(n/2)$; we show that the second term is small compared to $\sqrt{n}$ and conclude again a contradiction. Let $\Delta_n = \max(0, S_1-S_0, \dots, S_n-S_{n-1})$ be the largest jump of $S$ up to time $n$, clearly, $S_{n \wedge \sigma} \le 2n+\Delta_n$. From the tail \eqref{eq:tail_nu} of $\nu$, there exists a constant $K > 0$ such that $\P_n(n^{-1} \Delta_n \ge x) \le n \cdot \nu([nx, \infty)) \le K x^{-1}$ for all $n \ge 1$ and $x > 0$. Using the bound $h^\uparrow(k) \le 2\sqrt{k}$ for every $k \ge 1$, this shows that for every $q > 0$, there exists constants $K(q)$ which may vary from line to line such that
\begin{align*}
\E_n\left[\left(n^{-1/2} h^\uparrow(S_{n \wedge \sigma})\right)^q\right]
&\le \E_n\left[2^q \left(2+n^{-1} \Delta_n\right)^{q/2}\right]
\\
&\le K(q) \E_n\left[\left(n^{-1} \Delta_n\right)^{q/2}\right]
\\
&\le K(q) \int_1^\infty \P_n(n^{-1} \Delta_n \ge x^{2/q}) \d x
\\
&\le K(q) \int_1^\infty K x^{-2/q} \d x,
\end{align*}
which is finite whenever $q<2$. In particular, the sequence $(n^{-1/2} h^\uparrow(S_{n \wedge \sigma}))_{n \ge 1}$ is uniformly integrable; since $\P_n(S_{n \wedge \sigma} \ge n/2) \to 0$, we indeed conclude that $n^{-1/2} \E_n[h^\uparrow(S_{n \wedge \sigma}) \ind{S_{n \wedge \sigma} \ge n/2}] \to 0$.

Finally, let us assume that $b_n \to b$ as $n\to \infty$, with $b \in ]-\infty, \infty[$. Then \eqref{eq:cv_walk_cauchy} with $a_n=n$ yields
\[\left(n^{-1} S_{[nt]}\right)_{t \ge 0} \cvdist (C^b_t)_{t \geq 0},\]
for the Skorokhod topology, where $C^b = (C_t+bt)_{t \geq 0}$ is a symmetric Cauchy process with a drift $b$. We aim at showing that $g : x \mapsto \sqrt{x} \cdot \ind{x>0}$ is harmonic for $C^b$ on $(0, \infty)$; recall that it is also harmonic for $C$, we thus obtain
\[\E_1\left[\sqrt{C_1} \cdot \ind{\inf_{0 \le t \le 1} C_t > 0}\right] = 1 = \E_1\left[\sqrt{C_1+b} \cdot \ind{\inf_{0 \le t \le 1} C_t+bt > 0}\right],\]
and so $b=0$. For every $x \ge 1$, set
\[\Theta(x) = \inf\{t > 0 : C^b_t \notin (0,x)\}
\qquad\text{and}\qquad
\theta(x) = \inf\{k \ge 1 : k^{-1} S_k \notin (0,x)\}.\]
Fix a (large) $x$ and observe that $n^{-1/2} h^\uparrow(S_{n \wedge \theta(x)})$ under $\P_n$ converges in distribution as $n \to \infty$ towards $2 \pi^{-1/2} g(C^b_{1 \wedge \Theta(x)})$ under $\P_1$. Moreover, as above, we have $n^{-1/2} h^\uparrow(S_{n \wedge \theta(x)}) \le n^{-1/2} h^\uparrow(nx + \Delta_n) \le 2 (x + n^{-1} \Delta_n)^{1/2}$, and the latter sequence is uniformly integrable. It follows that
\[\E_1\left[g(C^b_{1 \wedge \Theta(x)})\right] 
= \frac{\sqrt{\pi}}{2} \lim_{n \to \infty} \E_n\left[n^{-1/2} h^\uparrow(S_{n \wedge \theta(x)})\right] 
= \frac{\sqrt{\pi}}{2} \lim_{n \to \infty} n^{-1/2} h^\uparrow(n) 
= 1.\]
In only remains to show that the left-hand side converges to $\E_1[(C^b_1)^{1/2}\ind{\inf_{0 \le t \le 1} C^b_t > 0}]$ as $x \to \infty$. Again, it suffices to show that $((C^b_s)^{1/2}\ind{\inf_{0 \le t \le 1} C^b_t > 0} ; 0 \le s \le 1)$ is uniformly integrable. This follows from the easy bound $(C^b_s)^{1/2} \ind{\inf_{0 \le t \le 1} C^b_t > 0} \le |b|^{1/2} + (\sup_{0 \le u \le 1} C_u)^{1/2}$ for every $s \in [0,1]$ and the tail of the random variable $\sup_{0 \le u \le 1} C_u$, which can be found in \cite{Bertoin:Levy_processes} Proposition 4, page 221, which shows that all moments smaller than $2$ are finite (let us mention that Darling \cite{Darling:The_maximum_of_sums_of_stable_random_variables} expresses at the very end of the paper the density of $\sup_{0 \le u \le 1} C_u$ when $C$ is the standard Cauchy process, i.e. $\pi^{-1} \Upsilon$).
\end{proof}

\subsection{More on the perimeter and volume process}

In view of our coming proofs, we shall need a few more results on the perimeter and volume process. First, we recall the following result  \cite[Lemma 5.8]{Budd-Curien:Geometry_of_infinite_planar_maps_with_high_degrees} which is stated for $a \in (3/2, 2)$ there, but the arguments still hold in the case $a=2$. 

\begin{lem}[\cite{Budd-Curien:Geometry_of_infinite_planar_maps_with_high_degrees}]
\label{lem:as_convergence_perimeter_volume}
The following almost sure convergences hold:
\[\frac{\log P_n}{\log n} \cvas 1,
\qquad\text{and}\qquad
\frac{\log V_n}{\log n} \cvas \frac{3}{2}.\]
\end{lem}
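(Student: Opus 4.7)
The invariance principle of Theorem~\ref{thm:convergence_processes_perimeter_volume} at $t=1$ yields the weak convergences
\[n^{-1} P_n \cvdist \mathsf{p}_\q \Upsilon^\uparrow_1
\qquad\text{and}\qquad
n^{-3/2} V_n \cvdist \mathsf{v}_\q \mathcal{V}(\Upsilon^\uparrow)(1),\]
with both limits almost surely valued in $(0,\infty)$; in particular, both convergences of the lemma already hold \emph{in probability}. My plan is to upgrade these to almost-sure convergence by working along the geometric subsequence $n_k = 2^k$ and applying a Borel--Cantelli argument, then filling the gaps between $n_k$ and $n_{k+1}$ using monotonicity for $V$ and a maximal estimate for $P$.

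For the volume, the sequence $(V_n)$ is non-decreasing, so $V_{n_k} \le V_n \le V_{n_{k+1}}$ for all $n \in [n_k,n_{k+1}]$. It therefore suffices to show that, for every $\varepsilon > 0$, the probability $\Pr{V_{n_k} \notin [n_k^{3/2-\varepsilon},\, n_k^{3/2+\varepsilon}]}$ is summable in $k$. This follows from the weak convergence of $V_{n_k}/n_k^{3/2}$ to the $(0,\infty)$-valued limit $\mathsf{v}_\q \mathcal{V}(\Upsilon^\uparrow)(1)$, together with polynomial tail/anti-concentration bounds on $V_{n_k}$ obtained from the recursive structure provided by Lemma~\ref{lem:budd15}. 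Letting $\varepsilon\to 0$ then yields the almost-sure claim for the volume.

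For the perimeter, the process $(P_n)$ is not monotone, so I would instead directly bound
\[\max_{n_k\le n<n_{k+1}} P_n \qquad\text{and}\qquad \min_{n_k\le n<n_{k+1}} P_n.\]
Using the Doob-transform identity $P \eqdist S^\uparrow$ from Lemma~\ref{lem:budd15} and the bound $h^\uparrow(\ell) \le 2\sqrt{\ell}$, both questions reduce to maximal/minimal estimates for the unconditioned walk $S$ reweighted by $h^\uparrow$. The upper bound on the running maximum follows from the Cauchy-type tail \eqref{eq:tail_nu} of $\nu$ together with a standard maximal inequality; the lower bound on the minimum uses the Markov property at time $n_k$ combined with the fact that the Cauchy process conditioned to stay positive, started from height of order $1$, avoids the interval $[0,\varepsilon]$ up to time $1$ with probability tending to $1$ polynomially fast as $\varepsilon \to 0$.

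The main obstacle is the quantitative lower bound on $\min P_n$: in contrast with the strictly-stable cases $a \ne 2$ treated in \cite{Budd-Curien:Geometry_of_infinite_planar_maps_with_high_degrees}, the walk $S$ has no first moment and there is no clean stable scaling to exploit. One must instead rely on the explicit harmonic function $h^\uparrow(\ell) \sim 2\sqrt{\ell/\pi}$, combined with Doob-transform manipulations similar to those used in the proof of Proposition~\ref{prop:convergence_walk_Cauchy}, to produce the polynomial-in-$\varepsilon$ decay needed for summability along $n_k = 2^k$. Once this estimate is in place, the arguments of \cite[Lemma 5.8]{Budd-Curien:Geometry_of_infinite_planar_maps_with_high_degrees} apply without essential modification and conclude the proof.
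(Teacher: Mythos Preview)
The paper does not actually prove this lemma: it simply recalls \cite[Lemma~5.8]{Budd-Curien:Geometry_of_infinite_planar_maps_with_high_degrees} (stated there for $a\in(3/2,2)$) and remarks that the same arguments go through when $a=2$. Your proposal is a sketch of what that cited proof looks like---geometric subsequence $n_k=2^k$, Borel--Cantelli, monotonicity of $V$ to fill the gaps, and maximal/minimal estimates for $P$ via the $h^\uparrow$-transform---and you end by deferring to the very same Lemma~5.8. In that sense you and the paper agree.

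That said, your sketch has a gap you should be aware of. You write that summability of $\Pr{V_{n_k}\notin[n_k^{3/2-\varepsilon},n_k^{3/2+\varepsilon}]}$ ``follows from the weak convergence of $V_{n_k}/n_k^{3/2}$ to a $(0,\infty)$-valued limit, together with polynomial tail/anti-concentration bounds''. Weak convergence only gives that these probabilities tend to $0$, not summably in $k$; the actual input is a \emph{quantitative} bound (for instance a moment estimate on $P_n/n$ of the kind provided by Lemma~\ref{lem:bound_moments_perimeter_FPP} in the paper, and analogous control on $V$), and you do not supply one. The same issue arises, as you yourself flag, for the lower bound on $\min_{n_k\le n<n_{k+1}}P_n$. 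Since both your plan and the paper ultimately rest on \cite[Lemma~5.8]{Budd-Curien:Geometry_of_infinite_planar_maps_with_high_degrees}, the cleanest fix is to do what the paper does: cite that lemma and note that its proof is insensitive to the value of $a$.
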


Observe that the fact such convergences hold in probability is a direct consequence of Theorem \ref{thm:convergence_processes_perimeter_volume}; nonetheless,  in order to prove Theorems \ref{thm:intro_dual} and \ref{thm:intro_Eden}, we shall evaluate the process $V$ at a certain random time tending to infinity, therefore an almost sure convergence as above is required, see Sections \ref{sec:fpp} and \ref{sec:dual}.

We continue with a conditional local limit theorem. We denote by $f^\uparrow$ the density of $\Upsilon^\uparrow_1$. For each $x \in \N$, we let $\P_x$ be the law of $S$ or $S^\uparrow$ started from $x$.

\begin{lem}\label{lem:LLT_perimeter}
Consider the asymptotic relation as $n \to \infty$
\begin{equation}\label{eq:conditional_LLT}
\P_x(S^\uparrow_n=y) = \frac{1}{n \mathsf{p}_\q} \cdot \frac{h^\uparrow(y) \sqrt{\pi}}{2\sqrt{y}} \cdot \left(f^\uparrow\left(\frac{y}{\mathsf{p}_\q n}\right) + \sqrt{\frac{y}{n}} o(1)\right),
\end{equation}
with $x, y \in \N$.
\begin{enumerate}[ref={\thelem(\roman*)}]
\item\label{lem:LLT_perimeter_1} If $x=1$, then \eqref{eq:conditional_LLT} holds uniformly in $y \ge 1$.

\item\label{lem:LLT_perimeter_2} For every $\varepsilon \in (0,1)$ and every sequence $\delta_n \to 0$, \eqref{eq:conditional_LLT} holds uniformly for $x \in [1, n \delta_n]$ and $y \in [\varepsilon n, \varepsilon^{-1} n]$.
\end{enumerate}
\end{lem}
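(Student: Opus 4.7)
The plan is to base the proof on the Doob-transform identity
\[
  \P_x\bigl(S^\uparrow_n = y\bigr)
  = \frac{h^\uparrow(y)}{h^\uparrow(x)}\,\P_x\bigl(S_n = y,\; \tau > n\bigr),
  \qquad \tau \coloneqq \inf\{k\ge 1 : S_k \le 0\},
\]
which reduces the statement to the conditional local limit theorem
\[
  \P_x(S_n = y, \tau > n)
  = \frac{h^\uparrow(x)}{n\,\mathsf{p}_\q}\cdot\frac{\sqrt{\pi}}{2\sqrt{y}}
  \Bigl( f^\uparrow\bigl(y/(n\mathsf{p}_\q)\bigr) + \sqrt{y/n}\,o(1) \Bigr)
\]
for the walk $S$ killed on entering $\Z_{-}$. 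This is a Vatutin--Wachtel/Doney-type estimate: the general form for a walk attracted to a stable law reads $\P_x(S_n=y,\tau>n) \sim V(x)\,U(y)\,g(y/a_n)/(n\,a_n)$, with $V,U$ the renewal functions of the strict descending/ascending ladder heights and $g$ the density of the meander at time~$1$.

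First I would collect the inputs for this theorem. The scaling is $a_n = \mathsf{p}_\q n$ by Proposition~\ref{prop:convergence_walk_Cauchy}; the non-sub-lattice hypothesis on $\mathbf{q}$ yields aperiodicity; the symmetric tail~\eqref{eq:tail_nu} places $S$ in the symmetric Cauchy domain with Spitzer parameter $\rho = 1/2$, so both renewal functions grow like $\sqrt{\,\cdot\,}$. The crucial point is that $h^\uparrow$ is by construction $\nu$-harmonic on $\N$ and vanishes on $\Z_{-}$, so (up to the multiplicative constant absorbed in the overall normalisation of the theorem) it \emph{is} the descending-ladder renewal function $V$; its explicit asymptotics $h^\uparrow(\ell) \sim 2\sqrt{\ell/\pi}$ fix that constant. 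Writing $U(y) \sim \sqrt{y}$ and identifying the meander density $g$ with $f^\uparrow$ via the standard Doob $h$-transform description of $\Upsilon^\uparrow$ produces the prefactor $\sqrt{\pi}/(2\sqrt{y})$ and yields the announced formula.

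For part~(i) it remains to specialise to $x=1$ (where $h^\uparrow(1)=1$) and verify uniformity in $y\in\N$. For $y$ of order $n$ this is directly the conditional LLT; for $y \gg n$ the uniform bound reduces to the heavy-tail estimate $\P_1(S_n = y,\tau>n) \le \P_1(S_n = y) = O(n/y^{2})$ inherited from \eqref{eq:tail_nu}, which matches the decay $f^\uparrow(u) = O(u^{-2})$ as $u\to\infty$; for $y$ small compared to $n$ one uses that $f^\uparrow$ is continuous and bounded near $0$ and that the factor $h^\uparrow(y)\sqrt{\pi}/(2\sqrt{y})$ carries the small-scale correction. Part~(ii) is the bulk regime: $y\in[\varepsilon n,\varepsilon^{-1}n]$ keeps $f^\uparrow(y/a_n)$ bounded away from $0$ and $\infty$, and the condition $x\le n\delta_n$ places us in the regime $x/a_n\to 0$, where the renewal asymptotics simplify to $h^\uparrow(x)$. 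Uniformity in $(x,y)$ then follows from a decomposition at the first time the walk reaches scale $\sqrt{n}$, combined with the strong Markov property and the unconditioned Gnedenko LLT for $S$.

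The main obstacle is that $\alpha=1$ is precisely the critical Cauchy case, typically sidestepped by textbook versions of the conditional LLT for stable walks: the mean is infinite and the centring in the invariance principle requires delicate tuning (cf. the nontrivial proof of Proposition~\ref{prop:convergence_walk_Cauchy}). At the local level one must likewise ensure that no residual drift sneaks into the density of the meander. What makes this feasible in our setting is the explicit availability of $h^\uparrow$ (providing $V$ \emph{exactly}, not just up to $(1+o(1))$) together with the closed-form density $f^\uparrow$; this is exactly what lets us sharpen the error from a cheap $o(1)$ to the $\sqrt{y/n}\,o(1)$ announced in \eqref{eq:conditional_LLT}.
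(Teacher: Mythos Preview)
Your strategy is the paper's: Doob transform plus a Vatutin--Wachtel/Doney conditional local limit theorem, with the explicit $h^\uparrow$ supplying the renewal asymptotics. Two points of execution are worth flagging, because the paper handles them more carefully than your sketch suggests.

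First, the density that appears in the cited conditional LLTs (Vatutin--Wachtel, Theorem~5, for $x=1$ uniformly in $y\ge 1$; Doney, Proposition~11, for the bulk regime) is the \emph{meander} density $\tilde f$, not $f^\uparrow$. These are related by $f^\uparrow(x) = C\,x^{1/2}\tilde f(x)$ (Doney--Savov), and it is precisely this conversion that produces the peculiar error term $\sqrt{y/n}\,o(1)$ in \eqref{eq:conditional_LLT}: the additive $o(1)$ in Vatutin--Wachtel becomes $\sqrt{y/n}\,o(1)$ after factoring out $\sqrt{c_n/y}$. Your phrase ``identifying the meander density $g$ with $f^\uparrow$'' hides this, and without it the form of the error is unexplained. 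Incidentally, because Vatutin--Wachtel already gives uniformity over all $y\ge 1$, your separate treatment of the regimes $y\asymp n$, $y\gg n$, $y\ll n$ in part~(i) is unnecessary.

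Second, the constants do not fall out solely from $h^\uparrow(\ell)\sim 2\sqrt{\ell/\pi}$. The paper first uses this to pin down $\P(\tau^->n)\sim C/(2\sqrt{\pi n})$ (via the duality between $\P(\tau^->n)$ and $\P(\tau^+>n)$ and the identification of $h^\uparrow$ with the renewal function), but a constant $C$ still remains after the Doney--Savov step. The paper closes by \emph{integrating} the resulting local estimate over $y\in(an,bn)$ and matching it against the invariance principle (Theorem~\ref{thm:convergence_processes_perimeter_volume}) to obtain $C^{-1}=2\sqrt{\mathsf{p}_\q/\pi}$. Your sketch should include this loop-closing step; otherwise the prefactor $\sqrt{\pi}/(2\sqrt{y}\,\mathsf{p}_\q n)$ is asserted rather than derived.
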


Let us note that the ratio $(h^\uparrow(y) \sqrt{\pi})/(2\sqrt{y})$ of the harmonic functions associated with $S$ and $\Upsilon$ tends to $1$ as $y \to \infty$.

\begin{proof}
Let us set $c_n = \mathsf{p}_\q n$ so $c_n^{-1} S_n$ converges to $\Upsilon_1$ in distribution from Proposition \ref{prop:convergence_walk_Cauchy}. Set $\tau^- = \inf\{k \ge 1 : S_k \le 0\}$, then
\[\P_x(S^\uparrow_n=y) = \frac{h^\uparrow(y)}{h^\uparrow(x)} \cdot \P_x(S_n=y \text{ and } \tau^- > n)\]
for all $n, x, y \ge 1$. Let us consider the meander $\tilde{\Upsilon}$ which is informally the process $\Upsilon$ conditioned to remain positive \emph{up to time $1$}; we denote by $\tilde{f}$ the density of $\tilde{\Upsilon}_1$. On the one hand, according to Vatutin \& Wachtel \cite[Theorem 5]{Vatutin-Wachtel:Local_probabilities_for_random_walks_conditioned_to_stay_positive}, uniformly in $y \ge 1$,
\[\P_1(S_n=y \text{ and } \tau^- > n) = \frac{\Pr{\tau^- > n}}{c_n} \cdot \left(\tilde{f}(y/c_n) + o(1)\right),\]
as $n \to \infty$. On the other hand, according to Doney \cite[Proposition 11]{Doney:Local_behaviour_of_first_passage_probabilities}, for every $\varepsilon \in (0,1)$ and every sequence $\delta_n \to 0$, uniformly for $x \in [1, n \delta_n]$ and $y \in [n \varepsilon, n/\varepsilon]$,
\[\P_x(S_n=y \text{ and } \tau^- > n)
= h^\uparrow(x) \cdot \frac{\Pr{\tau^- > n}}{c_n} \cdot \tilde{f}(y/c_n) \cdot (1 + o(1)),\]
as $n \to \infty$. Moreover, according to the discussion on page 181 of \cite{Vatutin-Wachtel:Local_probabilities_for_random_walks_conditioned_to_stay_positive}, since $\P(S_n > 0) \to 1/2$, there exists a function $\ell$ slowly varying at infinity such that, as $n \to \infty$,
\[\Pr{\tau^- > n} \sim  \frac{\ell(n)}{\sqrt{n}},
\qquad\text{and}\qquad
\Pr{\tau^+ > n} \sim  \frac{1}{\pi \ell(n) \sqrt{n}},\]
where $\tau^+ = \inf\{k \ge 1 : S_k > 1\}$. Moreover, Equation 31 in \cite{Vatutin-Wachtel:Local_probabilities_for_random_walks_conditioned_to_stay_positive} applied to $1-S$ reads
\[h^\uparrow(n) \sim C \cdot n \cdot \Pr{\tau^+ > n}
\qquad\text{as}\qquad n \to \infty,\]
for some constant $C > 0$. Since we know that $h^\uparrow(n) \sim 2\sqrt{n/\pi}$, we conclude that, as $n \to \infty$,
\[\Pr{\tau^- > n} \sim \frac{1}{\pi \cdot n \cdot \P(\tau^+ > n)}
\sim \frac{C}{2\sqrt{\pi n}}.\]
This shows that there exists a constant $K > 0$ such that the asymptotic relation as $n \to \infty$
\begin{equation}\label{eq:conditional_LLT_meander_density}
\P_x(S^\uparrow_n=y) = \frac{K}{\mathsf{p}_\q} \cdot h^\uparrow(y) \cdot n^{-3/2} \cdot \left(\tilde{f}\left(\frac{y}{\mathsf{p}_\q n}\right) + o(1)\right),
\end{equation}
with $x, y \in \N$ holds uniformly in our two regimes.

According to Doney \& Savov \cite[Remark 12]{Doney-Savov:The_asymptotic_behavior_of_densities_related_to_the_supremum_of_a_stable_process}, there exists a constant $C > 0$ such that the density $f^\uparrow$ of $\Upsilon^\uparrow_1$ is given by $f^\uparrow(x) = C x^{1/2} \tilde{f}(x)$. Therefore, \eqref{eq:conditional_LLT_meander_density} reads
\[\P_x(S^\uparrow_n=y) 
= \frac{1}{C\sqrt{\mathsf{p}_\q}} \cdot \frac{h^\uparrow(y)}{\sqrt{y}} \cdot \frac{1}{n} \cdot \left(f^\uparrow\left(\frac{y}{\mathsf{p}_\q n}\right) + \sqrt{\frac{y}{n}} o(1)\right).\]
By integrating this identity for $x=1$ and using that $h^\uparrow(y)/\sqrt{y} \to 2/\sqrt{\pi}$, we easily obtain for every $0 < a < b < \infty$,
\[\Pr{S^\uparrow_n \in (an, bn)} \cv 2C \sqrt{\frac{\mathsf{p}_\q}{\pi}} \Pr{\mathsf{p}_\q \Upsilon^\uparrow_1 \in (a,b)}.\]
We conclude from Theorem \ref{thm:convergence_processes_perimeter_volume} that $C^{-1} = 2 \sqrt{\mathsf{p}_\q/\pi}$, which completes the proof.
\end{proof}

Lemma \ref{lem:LLT_perimeter_2} and the bound $h^\uparrow(y) \le 2 \sqrt{y}$ valid for all $y \ge 1$ enable us to compare the distribution of $P_n$ when started from two different values at time $0$.

\begin{cor}\label{cor:bound_conditional_LLT}
For every $\varepsilon \in (0,1)$ and every sequence $\delta_n \to 0$, uniformly for $x, x' \in [1, n \delta_n]$ and $y \in [\varepsilon n, \varepsilon^{-1} n]$,
\[\left|\P_x(P_n=y) - \P_{x'}(P_n=y)\right| = \frac{o(1)}{n \sqrt{\varepsilon}},
\qquad\text{as}\qquad n \to \infty.\]
\end{cor}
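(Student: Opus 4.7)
The approach is a direct exploitation of Lemma \ref{lem:LLT_perimeter_2}, using crucially that the leading term in its asymptotic expansion is independent of the starting point $x$. Recall that the perimeter process $P$ has the law of $S^\uparrow$, so $\P_x(P_n = y) = \P_x(S^\uparrow_n = y)$. Apply the asymptotic relation \eqref{eq:conditional_LLT} to both $\P_x(P_n=y)$ and $\P_{x'}(P_n=y)$: the main term
\[\frac{1}{n \mathsf{p}_\q} \cdot \frac{h^\uparrow(y) \sqrt{\pi}}{2\sqrt{y}} \cdot f^\uparrow\!\left(\frac{y}{\mathsf{p}_\q n}\right)\]
does not depend on $x$, and therefore cancels when we form the difference. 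What remains is at most twice the error term, namely
\[\frac{1}{n \mathsf{p}_\q} \cdot \frac{h^\uparrow(y) \sqrt{\pi}}{2\sqrt{y}} \cdot \sqrt{\frac{y}{n}} \cdot o(1),\]
where the $o(1)$ is, by Lemma \ref{lem:LLT_perimeter_2}, uniform over $x, x' \in [1, n \delta_n]$ and $y \in [\varepsilon n, \varepsilon^{-1} n]$.

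To conclude, I would bound each factor of this error. The explicit expression \eqref{eq:def_h_up} gives $h^\uparrow(y) \le 2 \sqrt{y}$ for every $y \ge 1$ (as noted just before the corollary), so the prefactor $h^\uparrow(y)\sqrt{\pi}/(2\sqrt{y})$ is bounded by $\sqrt{\pi}$ uniformly in $y$. On the other hand, the constraint $y \le n/\varepsilon$ forces $\sqrt{y/n} \le 1/\sqrt{\varepsilon}$. Combining these, the error is at most $\frac{\sqrt{\pi}}{\mathsf{p}_\q} \cdot \frac{o(1)}{n\sqrt{\varepsilon}}$, which is $o(1)/(n\sqrt{\varepsilon})$ after absorbing the constant $\sqrt{\pi}/\mathsf{p}_\q$ into the $o(1)$.

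There is essentially no obstacle in this argument, which is why the statement is presented as a corollary rather than a lemma. The only point worth double-checking is that the little-$o$ produced by Lemma \ref{lem:LLT_perimeter_2} is indeed uniform in both variables $x$ and $y$ over the stated ranges, which follows directly from the uniform statement of part \ref{lem:LLT_perimeter_2}; once this is granted, the triangle inequality yields the desired bound.
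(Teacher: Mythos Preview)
Your proof is correct and follows exactly the approach indicated by the paper, which states just before the corollary that it is enabled by Lemma~\ref{lem:LLT_perimeter_2} together with the bound $h^\uparrow(y) \le 2\sqrt{y}$; no separate proof is given there, and your argument fills in precisely the intended details.
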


Appealing to Lemma \ref{lem:LLT_perimeter_1}, we next show that the sequence $(n/P_n)_{n \ge 1}$ is uniformly integrable.

\begin{lem}\label{lem:bound_moments_perimeter_FPP}
For every $0 < q < 3/2$, we have
\[\sup_{n \ge 1} \Es{\left(\frac{n}{P_n}\right)^q} < \infty.\]
\end{lem}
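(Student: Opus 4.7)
The plan is to decompose the expectation and control $\P_1(P_n = y)$ for small values of $y$ via the conditional local limit theorem (Lemma \ref{lem:LLT_perimeter_1}), exploiting the fact that the density $f^\uparrow$ vanishes like $\sqrt{x}$ at the origin.

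First I would split
\[\Es{\left(\frac{n}{P_n}\right)^q} = \sum_{y \ge n} \left(\frac{n}{y}\right)^q \P_1(P_n = y) + \sum_{y=1}^{n-1} \left(\frac{n}{y}\right)^q \P_1(P_n = y).\]
The first sum is bounded by $1$ since $(n/y)^q \le 1$. Only the small-$y$ regime requires work.

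Next, I would use Lemma \ref{lem:LLT_perimeter_1} together with two uniform bounds: the inequality $h^\uparrow(y) \le 2\sqrt{y}$ valid for all $y \ge 1$ (noted before Corollary \ref{cor:bound_conditional_LLT}), and the behavior $f^\uparrow(x) \le C \sqrt{x}$ on any bounded interval, which follows from the identity $f^\uparrow(x) = C x^{1/2} \tilde{f}(x)$ recalled in the proof of Lemma \ref{lem:LLT_perimeter} (where $\tilde{f}$ is the continuous, hence locally bounded, meander density). Together with the uniformity of the $o(1)$ in Lemma \ref{lem:LLT_perimeter_1}, one obtains for $n$ large enough and every $1 \le y \le n$,
\[\P_1(P_n = y) \;\le\; \frac{K}{n} \left( f^\uparrow\!\left(\frac{y}{\mathsf{p}_\q n}\right) + \sqrt{\frac{y}{n}} \right) \;\le\; \frac{K'\sqrt{y}}{n^{3/2}},\]
for some constants $K, K'$ independent of $n$ and $y$.

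Finally I would plug this into the second sum:
\[\sum_{y=1}^{n-1} \left(\frac{n}{y}\right)^q \P_1(P_n=y) \;\le\; \frac{K'}{n^{3/2-q}} \sum_{y=1}^{n-1} y^{1/2 - q}.\]
Since $q < 3/2$, the exponent $1/2 - q$ is strictly greater than $-1$, so $\sum_{y=1}^{n-1} y^{1/2-q} = O(n^{3/2-q})$, giving a $O(1)$ bound uniform in $n$. The finitely many small values of $n$ (for which the $o(1)$ in Lemma \ref{lem:LLT_perimeter_1} is not yet controlled) are handled trivially since $P_n \ge 1$ makes $\Es{(n/P_n)^q} \le n^q$ finite.

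The only delicate point I foresee is ensuring that $f^\uparrow(x) = O(\sqrt{x})$ near the origin, which is not stated directly in the excerpt but follows from the representation $f^\uparrow(x) = Cx^{1/2}\tilde{f}(x)$ of Doney \& Savov used in the proof of Lemma \ref{lem:LLT_perimeter}; everything else is a routine Abel summation of a power.
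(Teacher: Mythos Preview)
Your proof is correct and follows essentially the same route as the paper: split according to $\{P_n>n\}$ versus $\{P_n\le n\}$, apply Lemma~\ref{lem:LLT_perimeter_1} together with $h^\uparrow(y)\le 2\sqrt{y}$, and control the remaining sum via the behaviour of $f^\uparrow$ near the origin. The only cosmetic difference is that the paper invokes the sharper asymptotic $f^\uparrow(x)\sim cx$ as $x\downarrow 0$ from Doney--Savov and phrases the computation as a Riemann sum, whereas you use the cruder bound $f^\uparrow(x)=O(\sqrt{x})$ obtained from $f^\uparrow(x)=Cx^{1/2}\tilde f(x)$ and sum directly; since the binding constraint $q<3/2$ comes from the $\sqrt{y/n}\,o(1)$ error term in either case, your weaker input already suffices.
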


\begin{proof}
Fix $q > 0$ and $n \ge 1$; Lemma \ref{lem:LLT_perimeter_1} and the bound $h^\uparrow(k) \le 2 \sqrt{k}$ for all $k \ge 1$ yield
\begin{align*}
\Es{\left(\frac{n}{P_n}\right)^q}
&\le 1 + \Es{\left(\frac{n}{P_n}\right)^q \ind{P_n \le n}}
\\
&= 1 + \sum_{k=1}^n \left(\frac{n}{k}\right)^q \cdot \frac{1}{n \mathsf{p}_\q} \cdot \frac{h^\uparrow(k) \sqrt{\pi}}{2\sqrt{k}} \cdot \left(f^\uparrow\left(\frac{k}{\mathsf{p}_\q n}\right) + \sqrt{\frac{k}{n}} o(1)\right)
\\
&\le 1 + \frac{\sqrt{\pi}}{n \mathsf{p}_\q} \sum_{k=1}^n \left(\frac{n}{k}\right)^q \cdot \left(f^\uparrow\left(\frac{k}{\mathsf{p}_\q n}\right) + \sqrt{\frac{k}{n}} o(1)\right)
\\
&= 1 + \frac{\sqrt{\pi}}{\mathsf{p}_\q} \left(\int_0^1 \frac{f^\uparrow(x/\mathsf{p}_\q) \d x}{x^q} (1+o(1)) + \int_0^1 \frac{\d x}{x^{q-1/2}} o(1)\right).
\end{align*}
According to Doney \& Savov \cite[Remark 12]{Doney-Savov:The_asymptotic_behavior_of_densities_related_to_the_supremum_of_a_stable_process}, there exists a constant $c>0$ such that $f^\uparrow(x) \sim cx$ as $x \downarrow 0$ so both integrals above converge whenever $0 < q < 3/2$ and the claim follows.
\end{proof}

\begin{rem}\label{rem:moment_inverse_Cauchy_positive}
We know from Theorem \ref{thm:convergence_processes_perimeter_volume} that $n/P_n$ converges in distribution to 
$(\mathsf{p}_\q \Upsilon^\uparrow_1)^{-1}$ as $n \to \infty$. Since Lemma \ref{lem:bound_moments_perimeter_FPP} implies that $(n/P_n)_{n \ge 1}$ is uniformly integrable, we conclude that
\[\Es{n/P_n} \cv \Es{(\mathsf{p}_\q \Upsilon^\uparrow_1)^{-1}}.\]
It turns out that the value of the limit is explicit:
\begin{equation}\label{eq:moment_inverse_Cauchy_positive}
\Es{(\mathsf{p}_\q \Upsilon^\uparrow_1)^{-1}} = \frac{2}{\pi^2 \mathsf{p}_\q}.
\end{equation}
Indeed, the process $\Upsilon^\uparrow$ is a positive self-similar Markov process; let us denote by $\xi^\uparrow$ the associated Lévy process in the Lamperti representation, then, according to Bertoin \& Yor \cite{Bertoin-Yor:The_entrance_laws_of_self_similar_Markov_processes_and_exponential_functionals_of_Levy_processes}, we have the identity $\E[(\Upsilon^\uparrow_1)^{-1}] = \E[\xi^\uparrow_1]^{-1}$. Kyprianou, Pardo \& Rivero \cite[Proposition 2]{Kyprianou-Pardo-Rivero:Exact_and_asymptotic_n_tuple_laws_at_first_and_last_passage} express the characteristic exponent of $\xi^\uparrow$, from which we find $\E[\xi^\uparrow_1] = \pi^2/2$.
\end{rem}

\section{First passage percolation distance}
In this section we prove Theorem \ref{thm:intro_Eden}. The technique of proof is similar to that of  \cite[Theorem 4]{Curien-Le_Gall:Scaling_limits_for_the_peeling_process_on_random_maps} or \cite[Proposition 4.1]{Budd-Curien:Geometry_of_infinite_planar_maps_with_high_degrees}, but again the characteristics of the Cauchy-type walks driving the perimeter process lead to new phenomena that were absent in previous works and which require additional arguments. \medskip 
\label{sec:fpp}

Let $\map$ be an infinite one-ended bipartite map. We consider its dual map $ \map^{\dagger}$ and equip independently each edge $e$ of $\map^{\dagger}$ with a random weight $x_e$ distributed according to the exponential law of mean $1$, i.e. with density $\ex^{-x} \d x \ind{x >0}$. We define then the fpp-distance\footnote{This model on the dual map $\map^\dagger$ is often referred to as the \emph{Eden model} on $\map$ \cite{Ambjorn-Budd:Multi_point_functions_of_weighted_cubic_maps}.} on $ \map^{\dagger}$ which modifies the usual dual graph metric on $\map^{\dagger}$: for every pair $u, v$ of vertices of $\map^{\dagger}$ (i.e.~faces of $\map$), we set
\[\mathrm{d_{fpp}}(u, v) = \inf \sum_{e \in \gamma} x_{e},\]
where the infimum is taken over all paths $\gamma : u \to v$ in $\map^{\dagger}$. For any $r \geq 0$, we denote by $ \mathrm{Ball}_{r}^{ \mathrm{Eden}}( \map)$ the set of faces of $\map$ which are within fpp-distance less than $r$ from the root-face of $\map$. Then as usual, we consider its hull $\hBallEden( \map)$, obtained by filling-in all the finite components of its complement: it is then a sub-map of $\map$ with a single hole. Recall that $| \hBallEden( \map)|$ is  the number of inner vertices of the map and  $|\partial \hBallEden( \map)|$ the perimeter of its hole. Our main result is the following, which readily implies and extends Theorem \ref{thm:intro_Eden} (after translation to the dual version).

\begin{thm}[First passage percolation growth]\label{thm:perimeter_volume_fpp}
The following convergences in probability hold:
\[\frac{\log |\partial \hBallEden(\Map_\infty)|}{r} \cvproba[r] \pi^2 \mathsf{p}_\q
\qquad\text{and}\qquad
\frac{\log |\hBallEden( \Map_\infty)|}{r} \cvproba[r] \frac{3}{2} \pi^2 \mathsf{p}_\q.\]
\end{thm}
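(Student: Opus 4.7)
The plan is to use the \emph{uniform peeling} algorithm on $\Map_\infty$, coupling it with an independent family of iid $\mathrm{Exp}(1)$ weights on the dual edges that define the fpp-distance. By the memoryless property of exponential clocks, the $i$-th peeling step happens after a time $\mathrm{Exp}(2P_i)$ and peels a uniformly chosen edge of the boundary. Consequently, if $T_n$ denotes the fpp-time at which the $n$-th peeling step is performed, then
\[ T_n \enskip=\enskip \sum_{i=0}^{n-1} \frac{E_i}{2P_i}, \]
where $(E_i)_{i \ge 0}$ are iid $\mathrm{Exp}(1)$ variables independent of the perimeter process $(P_i)$. Moreover, the filled-in exploration $\overline{\mathfrak{e}}_n$ coincides up to one more face with the fpp-hull $\hBallEden[T_n](\Map_\infty)$, so that $|\partial \hBallEden[T_n]| = 2P_n$ and $|\hBallEden[T_n]| = V_n + O(P_n)$.

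The heart of the argument is to show that $T_n / \log n \to (\pi^2 \mathsf{p}_\q)^{-1}$ in probability. Remark~\ref{rem:moment_inverse_Cauchy_positive} gives $\Es{1/P_i} \sim \frac{2}{\pi^2 \mathsf{p}_\q i}$, whence $\Es{T_n} \sim \log n / (\pi^2 \mathsf{p}_\q)$. For the concentration, I would condition on $(P_i)$ and decompose
\[ \Var{T_n} \enskip=\enskip \Es{\sum_{i=0}^{n-1} \frac{1}{4 P_i^2}} + \Var{\sum_{i=0}^{n-1} \frac{1}{2 P_i}}. \]
The first term is bounded: choosing any $q \in (1, 3/2)$ and using $1/P_i^2 \le 1/P_i^q$ (valid since $P_i \ge 1$), Lemma~\ref{lem:bound_moments_perimeter_FPP} yields $\sum \Es{1/P_i^q} < \infty$. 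The second term is more delicate; the natural strategy is to pass to the scaling limit of Theorem~\ref{thm:convergence_processes_perimeter_volume}, so that $\sum 1/(2P_i)$ is well approximated by $(2\mathsf{p}_\q)^{-1}\int_{1/n}^1 ds/\Upsilon^\uparrow_s$. Cutting this integral into dyadic blocks $[2^{-(k+1)}, 2^{-k}]$ and exploiting the self-similarity of $\Upsilon^\uparrow$ (which makes each block's contribution distributed as a single scale-invariant random variable of mean $(\log 2)\, \Es{1/\Upsilon^\uparrow_1}$), one obtains concentration via the ergodicity of the associated Lamperti Lévy process, together with the integrability from Remark~\ref{rem:moment_inverse_Cauchy_positive}.

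Once $T_n$ is known to concentrate, set $n_r := \lfloor \exp(\pi^2 \mathsf{p}_\q\, r) \rfloor$. For any $\varepsilon > 0$, with probability tending to $1$ as $r \to \infty$ we have $T_{n_{(1-\varepsilon)r}} < r < T_{n_{(1+\varepsilon)r}}$, so the step $N_r$ at which the fpp-hull of radius $r$ is completed satisfies $n_{(1-\varepsilon)r} \le N_r \le n_{(1+\varepsilon)r}$. Combined with the almost-sure convergences $\log P_n / \log n \to 1$ and $\log V_n / \log n \to 3/2$ from Lemma~\ref{lem:as_convergence_perimeter_volume}, this yields
\[ \frac{\log |\partial \hBallEden|}{r} \cvproba[r] \pi^2 \mathsf{p}_\q
\qquad\text{and}\qquad
\frac{\log |\hBallEden|}{r} \cvproba[r] \frac{3}{2}\pi^2 \mathsf{p}_\q, \]
the transfer from $P_{N_r}, V_{N_r}$ to $|\partial \hBallEden|, |\hBallEden|$ being negligible on the exponential scale since the coupling produces equality up to a single peeling step.

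The main obstacle is the concentration of $T_n$: while its expectation is handed to us by Remark~\ref{rem:moment_inverse_Cauchy_positive}, the long-range correlations of $(P_i)$ imposed by the Cauchy-type walk contribute fluctuations of the same logarithmic order as the expected value itself, so one cannot simply invoke a $L^2$-law of large numbers. The concentration must instead be extracted scale-by-scale from the self-similarity of $\Upsilon^\uparrow$ and the integrability of $1/\Upsilon^\uparrow_1$, a step which was essentially automatic in the dilute regime $a \in (2,5/2)$ of \cite{Budd-Curien:Geometry_of_infinite_planar_maps_with_high_degrees} but requires more delicate handling at the critical value $a=2$.
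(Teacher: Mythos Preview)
Your overall framework matches the paper's exactly: the uniform peeling, the representation $T_n=\sum E_i/(2P_i)$, the identification of the concentration of $T_n/\log n$ around $(\pi^2\mathsf{p}_\q)^{-1}$ as the crux, and the endgame via Lemma~\ref{lem:as_convergence_perimeter_volume} after inverting $n\mapsto T_n$. The mean computation via Remark~\ref{rem:moment_inverse_Cauchy_positive} and the bound on the first variance term are also handled the same way.

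The genuine difference is in the concentration of $\sum 1/P_i$. The paper does \emph{not} pass to the scaling limit: it works entirely at the discrete level, truncating to $P_i\in[\varepsilon i,\varepsilon^{-1}i]$ and proving a decorrelation estimate (Lemma~\ref{lem:decorrelation_FPP}) between $P_i$ and $P_j$ when $j/i\to\infty$. That estimate is obtained from the conditional local limit theorem (Lemma~\ref{lem:LLT_perimeter} and Corollary~\ref{cor:bound_conditional_LLT}), which shows that the law of $P_j$ under $\P_x$ depends only weakly on $x$ once $x\ll j$. This is what replaces your proposed ergodicity argument.

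Your route via self-similarity of $\Upsilon^\uparrow$ and ergodicity of its Lamperti L\'evy process is a reasonable heuristic, but there is a real gap: you never justify the approximation of $\sum_{i=1}^n 1/P_i$ by $\int_{1/n}^1 \d s/(\mathsf{p}_\q\Upsilon^\uparrow_s)$. Theorem~\ref{thm:convergence_processes_perimeter_volume} gives only weak convergence in the Skorokhod topology, and the functional $x\mapsto\int_{1/n}^1 \d s/x(s)$ is neither bounded nor continuous there; worse, the contribution to the sum comes from \emph{all} dyadic scales $i\in[2^k,2^{k+1}]$ simultaneously, so a single invariance principle at scale $n$ does not control the small-$i$ terms. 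Making your approach rigorous would require either a strong (almost sure) coupling of $P$ to $\Upsilon^\uparrow$ across scales, or a direct ergodic theorem for the discrete process---and the latter is essentially what the paper proves via the local limit theorem. In short, the self-similarity picture explains \emph{why} concentration holds, but the paper's decorrelation lemma is the missing technical ingredient that actually proves it.
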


The proof uses the peeling exploration described in Section \ref{sec:def_peeling_general} with a random algorithm that we now describe, called the \emph{uniform peeling}. We refer to \cite[Section 2.4]{Budd-Curien:Geometry_of_infinite_planar_maps_with_high_degrees} for details. Given a bipartite map $\map$, we construct a sequence $(\overline{\mathfrak{e}}_i)_{i \ge 0}$ as in Section \ref{sec:def_peeling_general} by selecting for each $i \ge 0$, conditional on $\overline{\mathfrak{e}}_i$, an edge $\mathcal{A}(\overline{\mathfrak{e}}_i)$ uniformly at random on the boundary of $\overline{\mathfrak{e}}_i$. On the other hand, observing an Eden model on $\map$, it is easy to see that the process $(\hBallEden[t]( \map))_{t \ge 0}$ admits jump times $0 = t_0 < t_1 < \cdots$ and that the sequence  $(\hBallEden[t_i]( \map))_{i \geq 0}$ is a peeling exploration of $\map$. Based on the properties of exponential distribution, \cite[Proposition 2.3]{Budd-Curien:Geometry_of_infinite_planar_maps_with_high_degrees} shows:
\begin{enumerate}
\item\label{item:Eden_peeling_unif} the sequence $(\hBallEden[t_i]( \map))_{i \ge 0}$ has the same law as a uniform peeling of $\map$;
\item\label{item:Eden_peeling_jump_times} conditional on $(\hBallEden[t_i]( \map))_{i \ge 0}$, the random variables $t_{i+1}-t_i$ are independent and distributed respectively according to the exponential law of parameter $|\partial \hBallEden[t_i](\map)|$.
\end{enumerate}

Let us write $(T_i)_{i \ge 0}$ for the sequence of jump times of the process $(\hBallEden[t](\Map_\infty))_{t \ge 0}$ and for each $i \ge 0$, let $P_i =  \frac{1}{2}|\partial \hBallEden[T_i]( \Map_{\infty})|$ and $V_i =  |\hBallEden[T_i]( \Map_{\infty})|$ be respectively the half-perimeter and volume associated with this uniform peeling. Then, according to Property \ref{item:Eden_peeling_jump_times} above, conditional on $(P_i)_{i \ge 0}$, we may write
\begin{equation}\label{eq:representation_jump_time_eden}
T_{n} = \sum_{i=0}^{n-1} \frac{ \mathbf{e}_{i}}{2P_{i}},
\qquad\text{for every}\qquad n \ge 1,
\end{equation}
where $(\mathbf{e}_{ i})_{i \geq 0}$ are independent exponential variables of expectation $1$. As discussed in Remark \ref{rem:moment_inverse_Cauchy_positive}, we have $\E[i/P_i] \to \E[(\mathsf{p}_\q \Upsilon^\uparrow_1)^{-1}] = 2/(\pi^2 \mathsf{p}_\q)$ as $i \to \infty$, so
  \begin{eqnarray} \label{eq:cvmean}\frac{1}{\log n} \sum_{i=1}^{n} \Es{\frac{1}{2 P_i}}
\cv \frac{1}{\pi^2 \mathsf{p}_\q}.  \end{eqnarray}
As we will see in Lemma \ref{lem:decorrelation_FPP}, it turns out that the process $(P_{i})_{i \geq 0}$ decorrelates over scales and so we can prove concentration of $\sum_{i=1}^{n} \frac{1}{2 P_i}$ around its mean:

\begin{prop}\label{prop:convergence_jump_times_FPP}
The following convergence holds:
\[\Es{\left| \frac{T_n}{\log n} - \frac{1}{\pi^2 \mathsf{p}_\q}\right|} \cv 0.\]
\end{prop}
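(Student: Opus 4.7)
Conditioning on the $\sigma$-algebra $\mathcal{G}$ generated by the perimeter process $(P_i)_{i\ge 0}$, the representation \eqref{eq:representation_jump_time_eden} naturally splits $T_n$ into its conditional mean and a centred noise:
\[T_n \;=\; A_n + B_n, \qquad A_n \coloneqq \sum_{i=0}^{n-1} \frac{1}{2P_i}, \qquad B_n \coloneqq \sum_{i=0}^{n-1} \frac{\mathbf{e}_i - 1}{2 P_i}.\]
The goal is to prove, separately, that $B_n/\log n \to 0$ and $A_n/\log n \to 1/(\pi^2 \mathsf{p}_\q)$ in $L^1$.

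\textbf{Noise term.} Conditional on $\mathcal{G}$, $B_n$ is a sum of independent centred random variables of conditional variance $1/(4P_i^2)$, so $\mathbb{E}[B_n^2] = \sum_{i=0}^{n-1} \mathbb{E}[1/(4P_i^2)]$. Fix any $q \in (1, 3/2)$. Since $P_i \ge 1$, one has $P_i^{-2} \le P_i^{-q}$, while Lemma \ref{lem:bound_moments_perimeter_FPP} gives $\mathbb{E}[P_i^{-q}] \le C_q \, i^{-q}$ (the finitely many small indices being handled trivially). As $\sum i^{-q} < \infty$, we get $\sup_n \mathbb{E}[B_n^2] < \infty$, hence $B_n/\log n \to 0$ in $L^2$ and a fortiori in $L^1$.

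\textbf{Conditional mean.} By Remark \ref{rem:moment_inverse_Cauchy_positive} and Cesàro summation, $\mathbb{E}[A_n]/\log n \to 1/(\pi^2 \mathsf{p}_\q)$ (this is exactly \eqref{eq:cvmean}); it thus remains to show $\mathrm{Var}(A_n) = o((\log n)^2)$. The diagonal contribution $\sum_i \mathrm{Var}((2P_i)^{-1})$ is $O(1)$ by the same moment bound. For the off-diagonal, the Markov property of $P$ yields
\[\mathrm{Cov}(P_i^{-1}, P_j^{-1}) = \mathbb{E}\bigl[P_i^{-1}\bigl(F_{j-i}(P_i) - \mathbb{E} F_{j-i}(P_i)\bigr)\bigr], \qquad F_m(x) \coloneqq \mathbb{E}_x[P_m^{-1}],\]
and on the event that $P_i$ lies in, say, $[1, i \log i]$ (which holds with overwhelming probability by the tails of $\nu$), Corollary \ref{cor:bound_conditional_LLT} gives $F_{j-i}(P_i) \approx F_{j-i}(1)$ when $j-i$ is of order $j$; since then $F_{j-i}(1) \approx F_j(1) = \mathbb{E}[P_j^{-1}]$, the covariance is negligible. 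A dyadic splitting $i \in [2^a, 2^{a+1})$, $j \in [2^b, 2^{b+1})$ organises the double sum, and blocks with $b-a = O(1)$ are absorbed by Cauchy--Schwarz against the diagonal estimate.

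\textbf{Main obstacle and conclusion.} The crux is the off-diagonal estimate: quantifying the decorrelation of $P_i$ and $P_j$ in the critical Cauchy regime is delicate, because the driving walk has heavy tails and only moments $\mathbb{E}[P_n^{-q}]$ for $q < 3/2$ are uniformly controlled, barring crude Cauchy--Schwarz bounds from being good enough. This is precisely what the forthcoming Lemma \ref{lem:decorrelation_FPP} should encapsulate, extracted from the conditional local limit theorem (Lemma \ref{lem:LLT_perimeter}) and its uniform version Corollary \ref{cor:bound_conditional_LLT}, combined with large-deviation-type tail bounds on $P_i$. Once $\mathrm{Var}(A_n) = o((\log n)^2)$ is secured, $A_n/\log n \to 1/(\pi^2 \mathsf{p}_\q)$ in probability; together with $\mathbb{E}[(A_n/\log n)^2] = O(1)$ (from the mean and variance estimates) providing uniform integrability, this upgrades to $L^1$ convergence. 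Adding the noise-term control from Step 1 gives the claim.
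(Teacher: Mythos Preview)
Your overall architecture matches the paper's: split $T_n$ into the conditional mean $A_n=\sum (2P_i)^{-1}$ and the exponential noise $B_n$, dispatch $B_n$ by a second-moment computation, and reduce the problem to a concentration statement for $A_n$ that ultimately rests on a decorrelation lemma (Lemma~\ref{lem:decorrelation_FPP}). Your treatment of $B_n$ is correct (and in fact slightly sharper than the paper's, which only uses $\E[P_i^{-2}]\le\E[P_i^{-1}]\le C/(i+1)$ to get $\E[B_n^2]=O(\log n)$).

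There is, however, a real gap in the conditional-mean part. You claim that near-diagonal blocks ($b-a=O(1)$, i.e.\ $j\asymp i$) are ``absorbed by Cauchy--Schwarz against the diagonal estimate''. But the only available bound is $\E[P_i^{-2}]\le\E[P_i^{-q}]\le C_q\,i^{-q}$ for $q<3/2$ (Lemma~\ref{lem:bound_moments_perimeter_FPP}), and Cauchy--Schwarz then yields
\[
\sum_{i=1}^{n}\sum_{i<j\le Ci}\sqrt{\E[P_i^{-2}]\,\E[P_j^{-2}]}\ \lesssim\ \sum_{i=1}^{n}\sum_{i<j\le Ci}(ij)^{-q/2}\ \asymp\ n^{2-q},
\]
which is polynomial in $n$, not $o((\log n)^2)$. (In fact one can check from the local limit theorem that $\E[(i/P_i)^2]\asymp\log i$, so even the true second moment would only give a Cauchy--Schwarz bound of order $(\log n)^2$, still not $o$.) The same issue arises for the far-off-diagonal: Corollary~\ref{cor:bound_conditional_LLT} compares $\P_x(P_m=y)$ with $\P_1(P_m=y)$ only for $y$ in a window $[\varepsilon m,m/\varepsilon]$, so it does not directly control $F_m(x)=\E_x[P_m^{-1}]$ without restricting $P_j$ to such a window.

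The paper resolves both issues by introducing a truncation: one writes $P_i^{-1}-\E[P_i^{-1}]=X_i^{(\varepsilon)}+Y_i^{(\varepsilon)}$ where $X_i^{(\varepsilon)}$ is the centred version of $P_i^{-1}\mathbb{I}_{\{P_i\in[\varepsilon i,i/\varepsilon]\}}$. This gives the deterministic bound $|X_i^{(\varepsilon)}|\le(\varepsilon i)^{-1}$, so the near-diagonal sum of $|\E[X_i^{(\varepsilon)}X_j^{(\varepsilon)}]|$ over $i<j\le A_n i$ is at most $\varepsilon^{-2}\log n\cdot\log A_n=o((\log n)^2)$; the far-off-diagonal is then exactly what Lemma~\ref{lem:decorrelation_FPP} is stated for. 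The tail $Y_i^{(\varepsilon)}$ is handled separately in $L^1$ by the uniform integrability of $(i/P_i)$, and one lets $\varepsilon\downarrow 0$ at the end. In particular the paper proves $L^1$ (not $L^2$) convergence of $A_n/\log n$; your route via $\mathrm{Var}(A_n)=o((\log n)^2)$ would need this truncation device as well.
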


Let us first prove Theorem \ref{thm:perimeter_volume_fpp} taking Proposition \ref{prop:convergence_jump_times_FPP} for granted.

\begin{proof}[Proof of Theorem \ref{thm:perimeter_volume_fpp}]
For every $t > 0$, let us set $U_t = \inf\{n \ge 1 : T_n > t\}$ and then write
\[\left(\frac{\log |\partial  \hBallEden[t]( \Map_{\infty})|}{t}, \frac{\log | \hBallEden[t]( \Map_{\infty})|}{t}\right)
= \frac{\log(U_t-1)}{t} \cdot \left(\frac{\log P_{U_t-1}}{\log(U_t-1)}, \frac{\log V_{U_t-1}}{\log(U_t-1)}\right).\]
Since $T_n$ is non-decreasing, Proposition \ref{prop:convergence_jump_times_FPP} implies that $t^{-1} \log U_t \to \pi^2 \mathsf{p}_\q$ in probability as $t \to \infty$, then Lemma \ref{lem:as_convergence_perimeter_volume} ensures that the pair in the right-hand side converges in probability to $(1, 3/2)$.
\end{proof}

We next turn to the proof of Proposition \ref{prop:convergence_jump_times_FPP}.

\begin{proof}[Proof of Proposition \ref{prop:convergence_jump_times_FPP}]
Recall that $(P_n)_{n \ge 0}$ has the law of $(S^\uparrow_n)_{n \ge 0}$ started from $1$, where $S$ is the random walk with step distribution $\nu$ defined by \eqref{eq:def_nu}. We first claim that $(\log n)^{-1} T_n - (\log n)^{-1} \sum_{i=0}^{n-1} \frac{1}{2P_{i}}$ converges to $0$ in $\mathrm{L}^2$ as $n \to \infty$. Indeed, we have $P_i \ge 1$ for every $i \ge 0$ so, according to Lemma \ref{lem:bound_moments_perimeter_FPP}, there exists a constant $C > 0$ such that $\E[P_i^{-2}] \le \E[P_i^{-1}] \le C (1+i)^{-1}$ for all $i \ge 0$. By first conditioning with respect to $(P_k)_{k \ge 0}$ and then integrating, we obtain from \eqref{eq:representation_jump_time_eden},
\[4 \cdot \Es{\left(T_n - \sum_{i=0}^{n-1} \frac{1}{2P_{i}}\right)^2}
= \Es{\left(\sum_{i=0}^{n-1} \frac{\mathbf{e}_i-1}{P_{i}}\right)^2}
= \Es{\sum_{i=0}^{n-1} \frac{1}{P_{i}^2}}
\le \sum_{i=0}^{n-1} \frac{C}{i+1}
= o((\log n)^2).\]
Given the convergence in mean \eqref{eq:cvmean}, it is sufficient to show the convergence in $\mathrm{L}^1$:
\begin{equation}\label{eq:convergence_sum_inverse_perimeter}
\frac{1}{\log n} \sum_{i=1}^{n} \left(\frac{1}{P_{i}} - \Es{\frac{1}{P_{i}}}\right) 
\cvL 0.
\end{equation}

Fix $\varepsilon > 0$ and for every $i \ge 1$, set
\begin{equation}\label{eq:epsilon_cut_inverse_perimeter}
\begin{aligned}
X^{(\varepsilon)}_i &\coloneqq \frac{1}{P_i} \ind{P_i \in [\varepsilon i, \varepsilon^{-1} i]} - \Es{\frac{1}{P_i} \ind{P_i \in [\varepsilon i, \varepsilon^{-1} i]}}
\\
Y^{(\varepsilon)}_i &\coloneqq \frac{1}{P_i} \ind{P_i \notin [\varepsilon i, \varepsilon^{-1} i]} - \Es{\frac{1}{P_i} \ind{P_i \notin [\varepsilon i, \varepsilon^{-1} i]}},
\end{aligned}
\end{equation}
so $P_i^{-1} - \E[P_i^{-1}] = X^{(\varepsilon)}_i + Y^{(\varepsilon)}_i$. We shall prove that
\[\lim_{\varepsilon \downarrow 0} \limsup_{n \to \infty} 
\Es{\left(\frac{1}{\log n} \sum_{i=1}^n X^{(\varepsilon)}_i\right)^2}
= \lim_{\varepsilon \downarrow 0} \limsup_{n \to \infty} 
\Es{\left|\frac{1}{\log n} \sum_{i=1}^n Y^{(\varepsilon)}_i\right|}
= 0.\]
We handle easily the second term. Indeed, according to Lemma \ref{lem:bound_moments_perimeter_FPP}, for every $1 < q < 3/2$ and every $i$ large enough,
\begin{align*}
i \cdot \Es{\frac{1}{P_i} \ind{P_i \notin [\varepsilon i, \varepsilon^{-1} i]}}
&\le \Es{\left(\frac{i}{P_i}\right)^q}^{1/q} \cdot \Pr{P_i \notin [\varepsilon i, \varepsilon^{-1} i]}^{1-1/q}
\\
&\le C(q) \cdot\Pr{\mathsf{p}_\q \Upsilon^\uparrow_1 \notin [\varepsilon, \varepsilon^{-1}]}^{1-1/q},
\end{align*}
where $C(q)$ is a constant depending only on $q$.
Observe that for $q > 1$, the last expression tends to $0$ as $\varepsilon \downarrow 0$ and so
\[\lim_{\varepsilon \downarrow 0} \limsup_{n \to \infty} \frac{1}{\log n} \Es{\left|\sum_{i=1}^n Y^{(\varepsilon)}_i\right|}
\le \lim_{\varepsilon \downarrow 0} \limsup_{n \to \infty} \frac{2}{\log n} \sum_{i=1}^n \Es{\frac{1}{P_i} \ind{P_i \notin [\varepsilon i, \varepsilon^{-1} i]}}
= 0.\]
We next consider the $X^{(\varepsilon)}_i$'s. Fix $\varepsilon > 0$ and a non-decreasing sequence $(A_n)_{n \ge 1}$ tending to infinity such that $\log A_n = o(\log n)$. Let us write for every $n \ge 1$, 
\begin{multline*}
\Es{\left(\frac{1}{\log n} \sum_{i=1}^n X^{(\varepsilon)}_i\right)^2}
\\
\le \frac{1}{(\log n)^2} \sum_{i=1}^n \Es{(X^{(\varepsilon)}_i)^2}
+ \frac{2}{(\log n)^2} \sum_{i=1}^n \sum_{j=i+1}^{A_n i} \left|\Es{X^{(\varepsilon)}_i X^{(\varepsilon)}_j}\right|
+ \frac{2}{(\log n)^2} \sum_{i=1}^n \sum_{j=A_n i}^n \left|\Es{X^{(\varepsilon)}_i X^{(\varepsilon)}_j}\right|.
\end{multline*}
The first term converges to zero as $n \to \infty$ since
\[\Es{(X^{(\varepsilon)}_i)^2}
\le \Es{\left(\frac{1}{P_i}\right)^2 \ind{\varepsilon i \le P_i \le i / \varepsilon}}
\le \left(\frac{1}{i \varepsilon}\right)^2.
\]
Appealing to Cauchy--Schwarz inequality, we have for every $i$ large enough,
\[\sum_{j=i+1}^{A_n i} \left|\Es{X^{(\varepsilon)}_i X^{(\varepsilon)}_j}\right|
\le \sum_{j=i+1}^{A_n i} \sqrt{\left(\frac{1}{i \varepsilon}\right)^2 \left(\frac{1}{j \varepsilon}\right)^2}
\le \frac{1}{\varepsilon^2} \frac{2 \log A_n}{i}
= \frac{o(\log n)}{i},\]
and so the second term above converges to zero as $n \to \infty$ as well. Controlling the last term is more involved. In Lemma \ref{lem:decorrelation_FPP}, we shall prove that
\[\lim_{\varepsilon \downarrow 0} \limsup_{i \to \infty} \sup_{1 \le i < n} \sup_{A_n i \le j \le n} 
\left|\Es{(i X^{(\varepsilon)}_i) \cdot (j X^{(\varepsilon)}_j)}\right| = 0.\]
Since we have $\sum_{i=1}^n \sum_{j=A_n i}^n \frac{1}{ij} \le \sum_{i=1}^n \sum_{j=1}^n \frac{1}{ij} = O((\log n)^2)$, it indeed follows that
\[\lim_{\varepsilon \downarrow 0}  \limsup_{n \to \infty} \frac{1}{(\log n)^2} \sum_{i=1}^n \sum_{j=A_n i}^n \left|\Es{X^{(\varepsilon)}_i X^{(\varepsilon)}_j}\right|
= 0,\]
which concludes the proof.
\end{proof}

The next lemma has been used in the course of the previous proof

\begin{lem}\label{lem:decorrelation_FPP}
For every sequence $(A_n)_{n \ge 1}$ tending to infinity, we have
\[\lim_{\varepsilon \downarrow 0} \limsup_{i \to \infty} \sup_{1 \le i < n} \sup_{A_n i \le j \le n} 
\left|\Es{(i X^{(\varepsilon)}_i) \cdot (j X^{(\varepsilon)}_j)}\right| = 0,\]
where $X^{(\varepsilon)}_i$ is as in \eqref{eq:epsilon_cut_inverse_perimeter}.
\end{lem}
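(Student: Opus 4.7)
The plan is to leverage the Markov property of the walk $P$, which has the law of $S^\uparrow$ started from $1$, to show that in the regime $j \ge A_n i$ with $A_n \to \infty$ the variable $X_j^{(\varepsilon)}$ becomes nearly independent of $\mathcal{F}_i := \sigma(P_0, \dots, P_i)$, with an error that decays faster than $1/j$. Setting $h_\varepsilon^{(k)}(y) := y^{-1}\ind{y \in [\varepsilon k, \varepsilon^{-1} k]}$, the usual covariance expansion yields
\[\Es{X_i^{(\varepsilon)} X_j^{(\varepsilon)}} = \Es{h_\varepsilon^{(i)}(P_i) \cdot R_{i,j}}, \qquad R_{i,j} := \Esc{h_\varepsilon^{(j)}(P_j)}{\mathcal{F}_i} - \Es{h_\varepsilon^{(j)}(P_j)}.\]
Since $h_\varepsilon^{(i)}(P_i) \le (\varepsilon i)^{-1}$, the target bound will follow once one proves $|R_{i,j}| = o(1/j)$ on $\{P_i \in [\varepsilon i, \varepsilon^{-1} i]\}$, uniformly over admissible triples $(i,j,n)$ as $n \to \infty$: this yields $|\Es{(iX_i^{(\varepsilon)})(jX_j^{(\varepsilon)})}| \le \varepsilon^{-1} o(1) \to 0$, and the outer limit $\varepsilon \downarrow 0$ is then trivial.

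By the Markov property of $S^\uparrow$, $R_{i,j} = \E_{P_i}[h_\varepsilon^{(j)}(S^\uparrow_{j-i})] - \E_1[h_\varepsilon^{(j)}(S^\uparrow_j)]$, which I would decompose as
\[R_{i,j} = \underbrace{\bigl(\E_{P_i}[h_\varepsilon^{(j)}(S^\uparrow_{j-i})] - \E_1[h_\varepsilon^{(j)}(S^\uparrow_{j-i})]\bigr)}_{(\mathrm{I})} + \underbrace{\bigl(\E_1[h_\varepsilon^{(j)}(S^\uparrow_{j-i})] - \E_1[h_\varepsilon^{(j)}(S^\uparrow_j)]\bigr)}_{(\mathrm{II})},\]
a change of starting point at fixed time followed by a change of time at fixed starting point. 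The constraint $j \ge A_n i$ with $A_n \to \infty$ ensures $i/(j-i) \le 1/(A_n-1) \to 0$ and $j/(j-i) \to 1$ uniformly in $(i,j,n)$. For (I), on $\{P_i \le \varepsilon^{-1}i\}$ one has $P_i \le (j-i)\delta_{j-i}$ for a suitable $\delta_m \to 0$ built from $A_n$ and $\varepsilon$, and $[\varepsilon j, \varepsilon^{-1} j] \subset [(\varepsilon/2)(j-i),(2/\varepsilon)(j-i)]$ for $n$ large; Corollary \ref{cor:bound_conditional_LLT} then gives $|\P_{P_i}(P_{j-i}=y) - \P_1(P_{j-i}=y)| = o(1)/(j-i)$ uniformly in $y$, and summing against $h_\varepsilon^{(j)}$, whose total weight $\sum_{y=\varepsilon j}^{\varepsilon^{-1}j} y^{-1}$ is $O(\log \varepsilon^{-1})$, yields $|(\mathrm{I})| = o(1/j)$. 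For (II), I would use Lemma \ref{lem:LLT_perimeter_1} to substitute the asymptotic for $\P_1(S^\uparrow_m=y)$ at $m \in \{j-i, j\}$; interpreting the resulting sum as a Riemann approximation of $(m\mathsf{p}_\q)^{-1}\int_{\varepsilon j/(m\mathsf{p}_\q)}^{\varepsilon^{-1}j/(m\mathsf{p}_\q)}f^\uparrow(u)u^{-1}\,du$ and using the continuity of $f^\uparrow$ on compact subsets of $(0,\infty)$ together with $j/(j-i) \to 1$, both expectations equal $(m\mathsf{p}_\q)^{-1}(G(\varepsilon)+o(1))$ with $G(\varepsilon):=\int_{\varepsilon/\mathsf{p}_\q}^{\varepsilon^{-1}/\mathsf{p}_\q}f^\uparrow(u)u^{-1}\,du$, so their difference is $G(\varepsilon)\mathsf{p}_\q^{-1}\cdot i/(j(j-i)) + o(1/j) = o(1/j)$.

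The main obstacle will be ensuring that the various $o(1)$'s appearing in both (I) and (II) are genuinely uniform over the admissible range of $(i,j,n)$. This reduces to the two quantitative facts $\sup i/(j-i) \to 0$ and $\inf (j-i) \to \infty$ as $n \to \infty$, both of which follow directly from $A_n \to \infty$; they permit a uniform choice of the auxiliary sequence $\delta_m$ entering Corollary \ref{cor:bound_conditional_LLT} and a uniform application of Lemma \ref{lem:LLT_perimeter_1} on the compact range. Combining the resulting bounds on (I) and (II) will then complete the argument.
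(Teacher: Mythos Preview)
Your argument is correct, and the overall strategy---using the Markov property plus the local limit estimates to show that $R_{i,j}$ is $o(1/j)$ on the event $\{P_i\in[\varepsilon i,\varepsilon^{-1}i]\}$---works. In particular, your reduction $\Es{X_i^{(\varepsilon)}X_j^{(\varepsilon)}}=\Es{h_\varepsilon^{(i)}(P_i)R_{i,j}}$ via $\Es{R_{i,j}}=0$, and the observation that $h_\varepsilon^{(i)}(P_i)$ vanishes off $\{P_i\in[\varepsilon i,\varepsilon^{-1}i]\}$, cleanly localise the problem. Your uniformity checks ($i/(j-i)\to 0$ and $j-i\to\infty$ uniformly) are the right ones to make Corollary~\ref{cor:bound_conditional_LLT} and Lemma~\ref{lem:LLT_perimeter} applicable. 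Note that you in fact prove something slightly stronger than stated: for each fixed $\varepsilon$ the inner $\limsup_n$ already vanishes, so the outer $\varepsilon\downarrow 0$ is indeed superfluous in your route.

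The paper's proof takes a related but genuinely different path. It does not split $R_{i,j}$ into (I)$+$(II); instead it first peels off the contributions from $\{P_i\notin[\varepsilon i,\varepsilon^{-1}i]\}$ and $\{P_j\notin[\varepsilon j,\varepsilon^{-1}j]\}$ via uniform integrability of $(iX_i^{(\varepsilon)})$, and then, for the main term, compares $\Esc{(jX_j^{(\varepsilon)})\ind{P_j\in[\varepsilon j,\varepsilon^{-1}j]}}{P_i}$ to the deterministic quantity $\Esc{\cdot}{P_i=1}$ using Corollary~\ref{cor:bound_conditional_LLT}. The crucial difference is that the paper never needs your term~(II): rather than comparing time $j-i$ to time $j$ via the LLT, it exploits that $\Esc{\cdot}{P_i=1}$ is a \emph{constant}, so its product with $(iX_i^{(\varepsilon)})\ind{P_i\in[\varepsilon i,\varepsilon^{-1}i]}$ is controlled by the near-centredness $|\Es{(iX_i^{(\varepsilon)})\ind{P_i\in[\varepsilon i,\varepsilon^{-1}i]}}|\le C\,\Pr{\Upsilon^\uparrow_1\notin[\varepsilon,\varepsilon^{-1}]}$, which vanishes only as $\varepsilon\downarrow 0$. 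Thus the paper genuinely uses the outer limit in $\varepsilon$, while your direct comparison avoids it at the price of one extra Riemann-sum computation with Lemma~\ref{lem:LLT_perimeter}. Both approaches rest on the same key input, Corollary~\ref{cor:bound_conditional_LLT}; yours is more transparent, the paper's slightly slicker.
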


We rely on the Markov property of the process $P = (P_k)_{k \ge 0}$ and the estimate from Corollary \ref{cor:bound_conditional_LLT} which shows that if $j$ is far enough from $i$, then $P_j$ does not depend crucially on $P_i$. This shows that, when $j$ is far enough from $i$, the law of $(P_i, P_j)$ is close to that of $(P_i, P_{j-i}')$, which is itself close to that of $(P_i, P_j')$, where $P'$ is an independent copy of $P$, and we conclude that the covariance between $X^{(\varepsilon)}_i$ and $X^{(\varepsilon)}_j$ is small.

\begin{proof}
Throughout the proof, we shall write $C$ for a universal constant, which may change from one line to another. Let us write
\begin{equation}\label{eq:bound_covariance_fpp}
\begin{aligned}
\left|\Es{(i X^{(\varepsilon)}_i) (j X^{(\varepsilon)}_j)}\right| \le &\left|\Es{(i X^{(\varepsilon)}_i) (j X^{(\varepsilon)}_j) \ind{P_i \notin [i \varepsilon, i/\varepsilon]}}\right|
\\
+ &\left|\Es{(i X^{(\varepsilon)}_i) (j X^{(\varepsilon)}_j) \ind{P_j \notin [j \varepsilon, j/\varepsilon]}}\right|
\\
+ &\left|\Es{(i X^{(\varepsilon)}_i) (j X^{(\varepsilon)}_j) \ind{P_i \in [i \varepsilon, i/\varepsilon]} \ind{P_j \in [j \varepsilon, j/\varepsilon]}}\right|,
\end{aligned}
\end{equation}
and let us consider each term separately. We put $K^{(\varepsilon)}_i = \E[P_i^{-1} \ind{P_i \in [\varepsilon i, \varepsilon^{-1} i]}]$, then from Lemma \ref{lem:bound_moments_perimeter_FPP}, we have $\sup_{i \ge 1, \varepsilon \in (0,1)} iK^{(\varepsilon)}_i < C < \infty$ and furthermore the family $(i X^{(\varepsilon)}_i)_{i \ge 1, 0 < \varepsilon < 1}$ is uniformly integrable. Recall also that $i/P_i$ converges in distribution towards $(\mathsf{p}_\q \Upsilon^\uparrow_1)^{-1}$ as $i \to \infty$, so, in particular, $\sup_{i \ge 1} \P(P_i > i/\varepsilon)$ converges to $0$ as $\varepsilon \to 0$. For the first term of \eqref{eq:bound_covariance_fpp}, note that on the event $P_i \notin [i \varepsilon, i/\varepsilon]$, we have $X^{(\varepsilon)}_i = - K^{(\varepsilon)}_i$ so
\[\left|\Es{(i X^{(\varepsilon)}_i) (j X^{(\varepsilon)}_j) \ind{P_i \notin [i \varepsilon, i/\varepsilon]}}\right|
\le (i K^{(\varepsilon)}_i)  \left|\Es{j X^{(\varepsilon)}_j \ind{P_i \notin [i \varepsilon, i/\varepsilon]}}\right|,\]
which converges to $0$ as $\varepsilon \to 0$ uniformly in $i,j \ge 1$ by the previous uniform integrability. The second term in \eqref{eq:bound_covariance_fpp} is symmetric, it only remains to prove
\[\lim_{\varepsilon \downarrow 0} \limsup_{n \to \infty} \sup_{1 \le i < n} \sup_{A_n i \le j \le n} 
\left|\Es{(i X^{(\varepsilon)}_i) (j X^{(\varepsilon)}_j) \ind{P_i \in [i \varepsilon, i/\varepsilon]} \ind{P_j \in [j \varepsilon, j/\varepsilon]}}\right| = 0.\]

Recall that $(A_n)_{n \ge 1}$ is increasing, and assume that $A_1 \ge 2$ for simplicity. Then for every $1 \le i < j \le n$, we have $A_n \ge A_{j-i}$, and if $j \ge A_n i$, then $1/2 \le (j-i)/j \le 1$. We apply the bound from Corollary \ref{cor:bound_conditional_LLT} with $\varepsilon$, $n$, and $\delta_n$ replaced respectively by $\varepsilon/2$, $j-i$, and $2 / (\varepsilon A_{j-i})$, and with the $o(1)$ smaller than $\varepsilon^3$ to obtain: for every $i$ large enough and every $A_n i \le j \le n$,
\begin{align*}
\sup_{\substack{x \le i/\varepsilon\\ \varepsilon j \le y \le j/\varepsilon}}
\left|\P_x(P_{j-i} = y) - \P_1(P_{j-i} = y)\right|
&\le
\sup_{\substack{x \le 2(j-i)/(\varepsilon A_{j-i})\\ \varepsilon (j-i)/2 \le y \le 2(j-i)/\varepsilon}}
\left|\P_x(P_{j-i} = y) - \P_1(P_{j-i} = y)\right|
\\ &\le
j^{-1} \varepsilon^{5/2}.
\end{align*}
The Markov property then yields
\begin{align*}
\left|\Prc{P_j =y}{P_i} - \Prc{P_j = y}{P_i=1}\right| \ind{P_i \le i/\varepsilon}
&= \sum_{x \le i/\varepsilon} \left|\P_x(P_{j-i}  = y) - \P_1(P_{j-i} = y)\right| \ind{P_i =x}
\\
&\le j^{-1} \varepsilon^{5/2} \ind{P_i \le i/\varepsilon},
\end{align*}
uniformly for $\varepsilon j \le y \le j/\varepsilon$ and so
\begin{align*}
&\ind{P_i \le i/\varepsilon} \left|\Esc{(j X^{(\varepsilon)}_j) \ind{P_j \in [j \varepsilon, j/\varepsilon]}}{P_i} - \Esc{(j X^{(\varepsilon)}_j) \ind{P_j \in [j \varepsilon, j/\varepsilon]}}{P_i=1}\right|
\\
&\le \ind{P_i \le i/\varepsilon} \sum_{j \varepsilon \le y \le j/\varepsilon} \left|\frac{j}{y} - jK^{(\varepsilon)}_j\right| \frac{\varepsilon^{5/2}}{j}
\\
&\le \ind{P_i \le i/\varepsilon} \left(\frac{1}{\varepsilon} + jK^{(\varepsilon)}_j\right) \varepsilon^{3/2}.
\end{align*}
Moreover, 
\[\left|\Esc{(j X^{(\varepsilon)}_j) \ind{P_j \in [j \varepsilon, j/\varepsilon]}}{P_i=1}\right|
\le \Es{|j X^{(\varepsilon)}_{j-i}|}
\le C.\]
Using the fact that $X^{(\varepsilon)}_i$ is centred, we also have
\begin{align*}
\left|\Es{(i X^{(\varepsilon)}_i) \ind{P_i \in [i \varepsilon, i/\varepsilon]}}\right|
&= \left|\Es{(i X^{(\varepsilon)}_i) \ind{P_i \notin [i \varepsilon, i/\varepsilon]}}\right|
\\
&\le (i K^{(\varepsilon)}_i) \Pr{P_i \notin [i \varepsilon, i/\varepsilon]}
\\
&\le C \cdot \Pr{\Upsilon^\uparrow_1 \notin [\varepsilon, 1/\varepsilon]}.
\end{align*}
Hence
\[\left|\Es{(i X^{(\varepsilon)}_i) (j X^{(\varepsilon)}_j) \ind{P_i \in [i \varepsilon, i/\varepsilon]} \ind{P_j \in [j \varepsilon, j/\varepsilon]}}\right|
\le C \cdot \Pr{\Upsilon^\uparrow_1 \notin [\varepsilon, 1/\varepsilon]} \left(C + \left(\frac{1}{\varepsilon} + C\right) \varepsilon^{3/2}\right),\]
which converges to $0$ as $\varepsilon \to 0$.
\end{proof}

\section{Graph distance}
\label{sec:dual}

The goal of this section is to prove our main result Theorem \ref{thm:intro_dual}. Recall that we denote by $\hBall(\map^{\dagger})$ the hull of the ball of radius $r \ge 1$ centered at the root-vertex for the graph distance in $\map^{\dagger}$. As in the previous section, the proof of this result is based on the peeling process of $ \map$ with an algorithm --now deterministic--  especially designed to discover (hulls of) metric balls in the dual map $\map^{\dagger}$, which we now recall. The latter is taken from \cite[Section 2.3]{Budd-Curien:Geometry_of_infinite_planar_maps_with_high_degrees}, see also \cite{Curien-Le_Gall:Scaling_limits_for_the_peeling_process_on_random_maps} for a slightly different type of peeling exploration (the ``face-peeling''). \medskip

Let $\map$ be an infinite, one-ended, rooted bipartite map. We construct a peeling exploration $(\overline{\mathfrak{e}}_i)_{i \ge 0}$ as follows. First, recall that $\overline{\mathfrak{e}}_0$ consists of a unique simple face with same degree as the root-face of $\map$, and a hole with the same degree. For any edge on the boundary of the hole of $ \overline{ \mathfrak{e}}_{i}$ we call its \emph{height} the graph distance in $ \map^{\dagger}$ between the adjacent face in $ \overline{ \mathfrak{e}}_{i}$ to that edge and the root-face of $\map$ (the root-vertex of $ \map^{\dagger}$). Inductively, suppose that at every step $i \ge 0$, the following hypothesis is satisfied:
\begin{center}
\begin{minipage}{.9\linewidth} $(H)$: There exists an integer $h \geq 0$ such that in the explored map $\overline{\mathfrak{e}}_i$ all the edges on the boundary of the hole are at height $h$ or $h+1$, and that the set of edges at height $h$ forms a connected segment.
\end{minipage}
\end{center} 
If $\overline{\mathfrak{e}}_i$ satisfies $(H)$ then the next edge to peel $\mathcal{A}(\overline{\mathfrak{e}}_i)$ is chosen as follows:
\begin{enumerate}
\item If all edges on the boundary of the hole of $\overline{\mathfrak{e}}_i$ are at the same height $h$ then $\mathcal{A}(\overline{\mathfrak{e}}_i)$ is any (deterministically chosen) edge on the boundary;
\item Otherwise $\mathcal{A}(\overline{\mathfrak{e}}_i)$ is the unique edge at height $h$ such that the edge immediately on its left is at height $h+1$.
\end{enumerate}

\begin{figure}[!ht]
\begin{center}
\includegraphics[width=.6\linewidth]{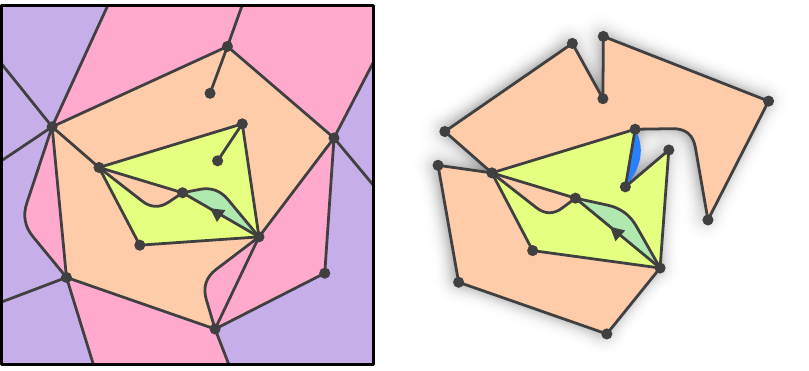}
\caption{ The left figure shows a portion of an infinite planar map with faces coloured according to (dual) graph distance to the root-face. The sub-map on the right depicts a possible state of the peeling by layers. Its hole has three edges at height $h=1$ and eleven edges at height $h+1=2$. The next peel edge is indicated in blue. }
\end{center}
\end{figure}

The first step $\overline{\mathfrak{e}}_0$ satisfies $(H)$ and one easily checks inductively that if we use this algorithm, then for any $i \ge 0$, the $i$-th step $\overline{\mathfrak{e}}_i$ satisfies it as well. The sequence $(\overline{\mathfrak{e}}_i)_{i \ge 0}$ thus forms a well-defined peeling exploration which we call the \emph{peeling by layers} of $ \map$. The name is justified by the following observation: for every $i \ge 0$, let $H_i$ be the minimal height of an edge on the boundary of $\overline{\mathfrak{e}}_i$, so $H_0 = 0$ and for each $i \ge 0$, we have $\Delta H_i \coloneqq H_{i+1}-H_i \in \{0, 1\}$. For each $r \ge 0$, if we denote by $\theta_r = \inf\{i \ge 0 : H_i \ge r\}$ then 
\begin{equation}\label{eq:interpretation_graph_distance}
\overline{\mathfrak{e}}_{\theta_{r}}  \mbox{ is the dual inside $\map$ of } \hBall(\map^{\dagger})
\qquad\text{for every}\qquad
r \ge 0.
\end{equation}
If we denote by $|\partial^{*}  \hBall(\map^\dagger)| \coloneqq |\partial \overline{ \mathfrak{e}}_{\theta}|$ the number of edges adjacent to the hole of $ \hBall(\map^{\dagger})$ then we have the following generalisation of Theorem \ref{thm:intro_dual}:
\begin{thm}[Graph distance growth]\label{thm:graph_distance_growth}
The following convergences in probability hold:
\[\frac{\log |\partial^{*} \hBall(\Map_\infty^\dagger)|}{ \sqrt{r}} \cvproba[r] \pi \sqrt{2}
\qquad\text{and}\qquad
\frac{\log \|\hBall(\Map_\infty^\dagger)\|}{ \sqrt{r}} \cvproba[r] \frac{3 \pi}{\sqrt{2}}.\]
\end{thm}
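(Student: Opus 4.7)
By the identity \eqref{eq:interpretation_graph_distance}, $\overline{\mathfrak{e}}_{\theta_r}$ is the dual inside $\Map_\infty$ of $\hBall(\Map_{\infty}^{\dagger})$. Hence $|\partial^{*}\hBall(\Map_{\infty}^{\dagger})| = |\partial\overline{\mathfrak{e}}_{\theta_r}| = 2P_{\theta_r}$, while $\|\hBall(\Map_{\infty}^{\dagger})\|$ coincides with the number of vertices of $\overline{\mathfrak{e}}_{\theta_r}$, which is $V_{\theta_r}$ plus a boundary contribution bounded by $P_{\theta_r}$. Since $\theta_r \ge r \to \infty$, Lemma~\ref{lem:as_convergence_perimeter_volume} yields $\log P_{\theta_r}/\log\theta_r \to 1$ and $\log V_{\theta_r}/\log\theta_r \to 3/2$ almost surely, so that both convergences in the theorem reduce to the single statement
\[\frac{\log\theta_r}{\sqrt{r}}\cvproba[r]\pi\sqrt{2}.\]

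\paragraph{Layer decomposition and the walk $R$.} Write $\theta_r = \sum_{h=0}^{r-1}\tau_h$ with $\tau_h = \theta_{h+1}-\theta_h$ and set $L_h = P_{\theta_h}$. Property $(H)$ guarantees that at time $\theta_h$ the $2L_h$ boundary edges of the hole all lie at height $h$, and we track the number $R_i$ still at height $h$ after $i$ further peel steps. A case-by-case reading of the algorithm shows that $R$ is non-increasing with $\Delta R_i \in \{-1\}\cup\{-(j+2) : j\ge 0\}$: the value $-1$ arises from adding a new face or from a gluing on the height-$(h+1)$ side of the peel, while the value $-(j+2)$ arises from a gluing on the height-$h$ side which absorbs $j$ intermediate height-$h$ edges into the finite hole. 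The layer ends at $\tau_h = \inf\{i \ge 0 : R_i = 0\}$. Extracting the conditional law of $\Delta R_i$ from \eqref{eq:qp}--\eqref{eq:qp2}, and using $h^\uparrow(\ell+O(1))/h^\uparrow(\ell)\to 1$ together with the tail \eqref{eq:tail_nu}, one finds a Cauchy-type decay
\[\mathbb{P}\bigl(-\Delta R_i \ge j \;\big|\; P_{\theta_h+i} = \ell\bigr)\;\sim\; \frac{\mathsf{p}_\q}{2j}\qquad\text{as }\ell,j\to\infty.\]

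\paragraph{First-passage time and per-layer multiplicative growth.} Applying Proposition~\ref{prop:intro_tail_first_ladder_time_Cauchy_walk} to the reflected walk built from $-\Delta R$, together with classical truncation arguments for renewal processes whose step distribution has a regularly varying tail of index $-1$, one obtains in probability as $L_h\to\infty$
\[\tau_h = \frac{4L_h}{\mathsf{p}_\q\log L_h}\bigl(1+o(1)\bigr).\]
Since $\tau_h \ll L_h$, the perimeter during the layer, which evolves as $S^\uparrow$ started from $L_h$, moves only a little in relative terms. The precise multiplicative drift is captured by the Lamperti representation of the positive self-similar Markov process $\Upsilon^\uparrow$ and the explicit expectation $\mathbb{E}[1/(\mathsf{p}_\q \Upsilon^\uparrow_1)] = 2/(\pi^2 \mathsf{p}_\q)$ from Remark~\ref{rem:moment_inverse_Cauchy_positive}; combining the two gives
\[\log L_{h+1} - \log L_h = \frac{\pi^2}{\log L_h}\bigl(1+o_{\mathbb{P}}(1)\bigr),\]
the factors of $\mathsf{p}_\q$ cancelling, in accordance with the universality remark following Theorem~\ref{thm:intro_dual}.

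\paragraph{Conclusion and main obstacle.} Telescoping the previous identity yields $(\log L_{h+1})^2-(\log L_h)^2 \to 2\pi^2$ in probability; a law-of-large-numbers argument based on a second-moment bound on the per-layer increments (whose fluctuations are polylogarithmic by Proposition~\ref{prop:intro_tail_first_ladder_time_Cauchy_walk}) then upgrades this into $(\log L_r)^2/r \to 2\pi^2$, so that $\log L_r = \pi\sqrt{2r}(1+o_{\mathbb{P}}(1))$. Because $\theta_r = \sum_{h<r}\tau_h$ is dominated by $\tau_{r-1} \asymp L_{r-1}/\log L_{r-1}$, we deduce $\log\theta_r = \log L_{r-1} + O_{\mathbb{P}}(\log r) = \pi\sqrt{2r}(1+o_{\mathbb{P}}(1))$, which is the desired convergence. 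The principal technical difficulty lies in making the per-layer growth $\log L_{h+1}-\log L_h \sim \pi^2/\log L_h$ rigorous: one must (i) control the Doob correction $h^\uparrow(\ell+k)/h^\uparrow(\ell)$ uniformly across the range of $P$ visited during a single layer, so as to transfer the first-passage/renewal estimates from the unconditioned walk $S$ (to which Proposition~\ref{prop:intro_tail_first_ladder_time_Cauchy_walk} applies) to the actual dynamics driven by $S^\uparrow$; and (ii) establish the concentration of $(\log L_h)^2$ around $2\pi^2 h$ despite the heavy-tailed and only approximately independent structure of the per-layer increments.
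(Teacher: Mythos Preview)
Your reduction to the statement $\log\theta_r/\sqrt{r}\to\pi\sqrt{2}$ in probability is correct and matches the paper exactly. The gap lies in your layer-by-layer attack on this reduced statement.

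The central claim
\[\log L_{h+1}-\log L_h=\frac{\pi^2}{\log L_h}\bigl(1+o_{\P}(1)\bigr)\]
is false as stated: the increment does not concentrate. Starting $S^\uparrow$ from $L_h$ and running for $\tau_h\asymp L_h/\log L_h$ steps, one sees locally the unconditioned walk $S$ (the Doob correction $h^\uparrow(\ell+k)/h^\uparrow(\ell)$ is close to~$1$), so $L_{h+1}-L_h$ is genuinely Cauchy-like at scale $\tau_h$; equivalently, in your Lamperti picture, $\xi^\uparrow_s$ at the small time $s\asymp 1/\log L_h$ has standard deviation of order $\sqrt{s}$, which dominates its mean $\frac{\pi^2}{2}s$. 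Hence $\log L_{h+1}-\log L_h$ is a random quantity of order $1/\log L_h$ with no per-layer concentration, and the step ``$(\log L_{h+1})^2-(\log L_h)^2\to 2\pi^2$ in probability'' fails for individual~$h$. (There is also a factor-of-two slip: the tail of $-\Delta R$ is $\sim\mathsf{p}_\q/j$, not $\mathsf{p}_\q/(2j)$, so $\tau_h\sim 2L_h/(\mathsf{p}_\q\log L_h)$; cf.\ Lemma~\ref{lem:time_complete_first_layer}.) Your invocation of Proposition~\ref{prop:intro_tail_first_ladder_time_Cauchy_walk} does not help here: it gives tails of hitting times for asymmetric Cauchy walks, not concentration of $\tau_h$ or of the summed increments.

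A viable argument must therefore produce concentration only \emph{after} summing, and this is where the real work lies --- precisely the difficulties (i)--(ii) you flag at the end but do not resolve. The paper proceeds differently: it works with the height $H_n$ as a function of the peeling time~$n$ and proves $H_n/(\log n)^2\to 1/(2\pi^2)$ in probability (Proposition~\ref{prop:limit_height_dual}), which inverts to the statement on~$\theta_r$. The upper bound $\limsup(\log n)^{-2}\Es{H_n}\le 1/(2\pi^2)$ comes from an interpolation $H_n^f=H_n+f(D_n/2P_n)$ with a carefully chosen $f$ (Lemma~\ref{lem:bound_expectation_H_interpolated}) combined with $\Es{n/P_n}\to 2/(\pi^2\mathsf{p}_\q)$. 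The lower bound discretises time along $t_k=\lfloor\exp\sqrt{2Ck}\rfloor$, uses Lemma~\ref{lem:time_complete_first_layer} to bound $H_{t_{k+1}}-H_{t_k}$ below by $\sim C\mathsf{p}_\q t_k/(2P_{t_k})$, and obtains a law of large numbers for $\sum_k t_k/P_{t_k}$ via the decorrelation estimate of Lemma~\ref{lem:decorrelation_FPP} (itself resting on the conditional local limit theorem, Lemma~\ref{lem:LLT_perimeter}). Neither half is captured by your sketch.
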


Theorem \ref{thm:graph_distance_growth} then follows from Proposition \ref{prop:limit_height_dual} below  together with Theorem \ref{thm:convergence_processes_perimeter_volume} in the same way that Theorem \ref{thm:perimeter_volume_fpp} followed from Proposition \ref{prop:convergence_jump_times_FPP} together with Theorem \ref{thm:convergence_processes_perimeter_volume}.

\begin{prop}\label{prop:limit_height_dual}
The following convergence in probability holds:
\[\frac{H_n}{(\log n)^2} \cvproba \frac{1}{2\pi^2}.\]
\end{prop}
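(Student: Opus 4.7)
The strategy is analogous to that of Proposition~\ref{prop:convergence_jump_times_FPP}: decompose the peeling exploration into successive layers, estimate the typical duration of each layer, and integrate against the scaling of the perimeter process. The target constant $1/(2\pi^2)$ will arise from combining two ingredients, a logarithmic drift of the auxiliary process $L$ below, produced by the Cauchy tail of $\nu$, and the harmonic-mean asymptotic $i \Es{1/P_i} \to 2/(\pi^2 \mathsf{p}_\q)$ from Remark~\ref{rem:moment_inverse_Cauchy_positive}.

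I would first introduce the process $L_i$ defined as the number of edges on the boundary of $\overline{\mathfrak{e}}_i$ sitting at the minimal height $H_i$; by the invariant $(H)$ they form a connected arc. From the transitions \eqref{eq:qp}--\eqref{eq:qp2}, $(L_i)$ is non-increasing during each layer: each peeling step decreases $L$ by $1$ at a ``generic'' event (new face or left identification) and by $2k+2$ at a right identification within the current height arc with finite hole of perimeter $2k$; when $L$ reaches $0$ the height increments and $L$ is reset to $2P_{\theta_{r+1}}$. A direct summation using $\nu(-k-1) \sim \mathsf{p}_\q (k+1)^{-2}$ from~\eqref{eq:tail_nu} and the convergence $h^\uparrow(\ell-k-1)/h^\uparrow(\ell) \to 1$ for $k = o(\ell)$ then yields, in the typical regime $1 \ll L_i \ll P_i$,
\[\Esc{L_i - L_{i+1}}{\mathcal{F}_i} \;=\; \mathsf{p}_\q \log L_i + O(1).\]
Consequently, the duration $N_r := \theta_{r+1} - \theta_r$ of the $r$th layer should satisfy, in probability,
\[N_r \;\sim\; \frac{2 P_{\theta_r}}{\mathsf{p}_\q \log P_{\theta_r}}.\]

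The next step is to translate this layer-by-layer estimate into an estimate of $\Es{H_n}$. Writing $H_n = \sum_{i < n} J_i$ with $J_i := H_{i+1}-H_i \in \{0,1\}$ and noting that each layer contributes exactly one jump, the average per-step rate of a jump during the layer started at $\theta_r$ is $1/N_r \sim \mathsf{p}_\q \log(2 P_{\theta_r})/(2 P_{\theta_r})$, so that on average $\Pr{J_i = 1} \approx \Es{\mathsf{p}_\q \log(2 P_i)/(2 P_i)}$. Combining the almost-sure convergence $\log P_i/\log i \to 1$ from Lemma~\ref{lem:as_convergence_perimeter_volume} with $i \Es{1/P_i} \to 2/(\pi^2 \mathsf{p}_\q)$ from Remark~\ref{rem:moment_inverse_Cauchy_positive}, we obtain
\[\Es{H_n} \;\sim\; \sum_{i=1}^n \frac{\mathsf{p}_\q \log i}{2} \cdot \frac{2}{\pi^2 \mathsf{p}_\q \, i} \;=\; \frac{1}{\pi^2} \sum_{i=1}^n \frac{\log i}{i} \;\sim\; \frac{(\log n)^2}{2\pi^2},\]
which is the expected limit. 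To upgrade this to convergence in probability, I would bound $\Var{H_n} = o((\log n)^4)$, following the strategy of Proposition~\ref{prop:convergence_jump_times_FPP}: split the sum at small/large index scales, control the diagonal contribution via $\Es{J_i^2}=\Es{J_i}$, and bound the off-diagonal covariances through the Markov property of $(P_i)$ and a decorrelation estimate in the spirit of Lemma~\ref{lem:decorrelation_FPP}.

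The main difficulty is the logarithmic drift estimate: unlike in the dilute case $a > 2$ treated in~\cite{Budd-Curien:Geometry_of_infinite_planar_maps_with_high_degrees}, the expected decrement of $L$ per step diverges (slowly) with $L$, and it is precisely this divergence that produces the exponential-of-square-root growth of $\theta_r$ instead of a polynomial one. Care is needed to treat the rare ``right identifications past the current height arc'' that end a layer in one large decrement, as well as the boundary regime $L_i \sim P_i$ at the start of a layer where the approximation $h^\uparrow(\ell-k-1)/h^\uparrow(\ell) \to 1$ breaks down, and to ensure that the logarithmic errors do not cumulate over the $\sim (\log n)^2$ layers that contribute.
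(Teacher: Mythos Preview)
Your heuristic is correct---the layer duration $N_r \sim 2P_{\theta_r}/(\mathsf{p}_\q\log P_{\theta_r})$ and the resulting sum $\sum_i (\log i)/(\pi^2 i) \sim (\log n)^2/(2\pi^2)$ are exactly what the paper obtains---but the plan to turn this into a proof has two genuine gaps, and the paper addresses them by splitting the argument into an upper half and a lower half that use quite different tools.

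First, the step ``$\Pr{J_i=1}\approx \E[\mathsf{p}_\q\log(2P_i)/(2P_i)]$'' is a renewal heuristic with no obvious rigorous justification: the event $\{J_i=1\}$ is $\{L_i=1\}$, a functional of the two-dimensional chain $(P_i,L_i)$, and no stationary or local-limit statement for $L_i/P_i$ is available. The paper bypasses this for the \emph{upper bound} on $\E[H_n]$ by interpolating: with $H_n^f=H_n+f(L_n/(2P_n))$ for a carefully chosen smooth $f:[0,1]\to[0,1]$ satisfying $f(0)=1$, $f(1)=0$, one can bound $\E[\Delta H_n^f\mid P_n,L_n]$ \emph{uniformly in $L_n$} by $(1+\varepsilon)\frac{\mathsf{p}_\q}{2}\frac{\log P_n}{P_n}$ (Lemma~\ref{lem:bound_expectation_H_interpolated}), reducing everything to the one-dimensional estimate on $(\log P_n)/P_n$. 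This is precisely the ``computational trick'' the paper advertises, and it is what makes the position-within-layer variable $L_n$ disappear from the analysis.

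Second, your concentration step via $\Var{H_n}$ and Lemma~\ref{lem:decorrelation_FPP} does not go through as written: that lemma decorrelates functionals of $P_i$ alone, whereas $J_i$ depends on $(P_i,L_i)$, so one would need a local limit theorem for the pair, which is not established. The paper instead obtains the \emph{lower bound} in probability by first proving your layer-duration claim rigorously (Lemma~\ref{lem:time_complete_first_layer}, via comparison with the half-plane map $\Map^{(\infty)}$ where the relevant increments become i.i.d.\ and one can apply the law of large numbers for heavy-tailed sums, rather than via your conditional-drift argument on $L$), then sampling $H$ along the times $t_k=\lfloor\exp\sqrt{2Ck}\rfloor$ and concentrating the sum $\sum_k t_k/P_{t_k}$ (Lemma~\ref{lem:approximation_Delta_H_t_k}), where Lemma~\ref{lem:decorrelation_FPP} \emph{does} apply because the summands depend only on $P$. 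The expectation upper bound and the in-probability lower bound are then combined via Markov's inequality to give convergence in probability.
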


\begin{proof}[Proof of Theorem \ref{thm:graph_distance_growth}]
Let $(P_i, V_i)_{i \ge 0}$ be the half-perimeter and volume process associated with this peeling by layers. Then we have for all $r \ge 1$
\[\left(\frac{\log |\partial \hBall(\Map_\infty)|}{\sqrt{r}}, \frac{\log |\hBall(\Map_\infty)|}{\sqrt{r}}\right)
= \frac{\log \theta_r}{\sqrt{r}} \cdot \left(\frac{\log P_{\theta_r}}{\log \theta_r}, \frac{\log V_{\theta_r}}{\log \theta_r}\right).\]
Since $H_n$ is non-decreasing, Proposition \ref{prop:limit_height_dual} implies that $r^{-1/2} \log \theta_r \to \pi \sqrt{2}$ in probability as $r \to \infty$ and we conclude appealing to Lemma \ref{lem:as_convergence_perimeter_volume}.
\end{proof}

The rest of the section is devoted to proving the key Proposition \ref{prop:limit_height_dual}. Before diving into the proof, let us present the heuristics. We can imagine the peeling by layers algorithm as ``turning around'' $\overline{\mathfrak{e}}_{n}$ in clockwise order to discover, layer after layer the faces of $\map$ by increasing dual graph distance (and filling-in the holes created). The idea is to consider the speed at which the algorithm is turning around the current explored map. Compared to the variation of the perimeter process $P_{n}$ which is a balanced random walk, the speed at which the peeling by layer turns around $\overline{\mathfrak{e}}_{n}$ is given by an \emph{unbalanced} random walk with Cauchy-type tails. By standard results, the rate of growth of such random walks is of order $n \log n$. Since $P_{n} \approx n$, the last heuristic suggests that the time needed to complete a layer around $ \mathfrak{e}_{n}$ is of order $ n/ \log n$ and so typically we should have 
$$ \Delta H_{n} \approx \frac{\log n}{ n}.$$ Integrating the last display indeed shows that $H_{n} \approx \log ^{2} n$.

More precisely, the proof of Proposition \ref{prop:limit_height_dual} is divided into two parts, which use different arguments. We first prove in the next subsection an upper-bound showing that $\limsup_{n \to \infty} (\log n)^{-2} \Es{H_n} \le 1/(2\pi^2)$. This part is based on a computational trick to estimate $ \mathbb{E}[\Delta H_{n}]$.  Section \ref{sec:lower_bound_height} is devoted to proving a lower bound showing  that for every $\varepsilon > 0$, we have $\lim_{n \to \infty} \P((\log n)^{-2} H_n \ge (1-\varepsilon)/(2\pi^2)) = 1$. This part uses ideas presented in above the heuristic (see in particular Lemma \ref{lem:time_complete_first_layer}).  It is then straightforward to combine the upper and lower bounds to prove Proposition \ref{prop:limit_height_dual}. We leave the details to the reader.

\subsection{The upper bound}
\label{sec:upper_bound_height}

\begin{prop}\label{prop:upper_bound_expectation_H}
We have
\[\limsup_{n \to \infty} \frac{1}{(\log n)^2} \Es{H_n}
\le \frac{1}{2\pi^2}.\]
\end{prop}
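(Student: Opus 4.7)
The strategy is to estimate the one-step increment $\E[\Delta H_n] = \Pr{\Delta H_n = 1}$ uniformly in $n$ and then sum. Throughout, I will introduce the auxiliary quantity $C_n$, the number of edges on the boundary of $\overline{\mathfrak{e}}_n$ at the lower height $H_n$; by hypothesis $(H)$ these form a connected arc of length $C_n \in \{1, \ldots, 2P_n\}$, and $\Delta H_n = 1$ if and only if the $(n+1)$-th peel step removes the entire remaining low arc, i.e., $C_{n+1} = 0$. This happens either trivially when $C_n = 1$, or via an identification peel step whose swallowed finite hole contains all $c = C_n$ remaining low edges.

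The \emph{computational trick} is to express and bound $\mathsf{R}(\ell, c) := \Prc{\Delta H_n = 1}{P_n = \ell, C_n = c}$ using the transition probabilities \eqref{eq:qp2} of Lemma \ref{lem:budd15}. Enumerating the identifications that clear the low arc (the dominant contribution being right-identifications with swallowed hole of perimeter $2k$ of order at least $c$, since left-identifications must wrap across the entire high arc and contribute only to lower order) yields
\[\mathsf{R}(\ell, c) \le (1+o(1)) \sum_{k \ge (c-1)/2} \frac{1}{2} \nu(-k-1) \frac{h^\uparrow(\ell - k - 1)}{h^\uparrow(\ell)}.\]
Using the Cauchy tail $\nu(-k) \sim \mathsf{p}_\q/k^2$ from \eqref{eq:tail_nu} together with $h^\uparrow(m) \le 2\sqrt{m/\pi}$, this gives $\mathsf{R}(\ell, c) \le (1+o(1)) \mathsf{p}_\q/c$ uniformly as $c \to \infty$.

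To complete the argument, I would estimate $\E[\Delta H_n] = \E[\mathsf{R}(P_n, C_n)]$ via a careful analysis of the typical size of $C_n$. Decomposing the peeling exploration into layers, $C$ evolves within the $r$-th layer $[\tau_{r-1}, \tau_r)$ from $2 P_{\tau_{r-1}}$ down to $0$ according to an unbalanced Cauchy-type walk whose expected decrement per step is of order $\mathsf{p}_\q \log P_{\tau_{r-1}}$ (the ``speed of turning''). A sharp analysis of this walk using the harmonic function $h^\uparrow$ together with the local limit estimate of Lemma \ref{lem:LLT_perimeter} should yield expected layer durations
\[\E[\tau_r - \tau_{r-1} \mid \mathcal{F}_{\tau_{r-1}}] \ge (1+o(1))\, \frac{\pi^2\, P_{\tau_{r-1}}}{\mathsf{p}_\q \log P_{\tau_{r-1}}},\]
the $\pi^2$ factor reflecting the Bertoin--Yor identity $\E[(\mathsf{p}_\q \Upsilon^\uparrow_1)^{-1}] = 2/(\pi^2 \mathsf{p}_\q)$ of Remark \ref{rem:moment_inverse_Cauchy_positive}. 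Summing layer-by-layer and using $P_k \sim \mathsf{p}_\q k$ asymptotically then gives $\E[\Delta H_k] \le (1+o(1)) \log k/(\pi^2 k)$, so
\[\E[H_n] = \sum_{k=0}^{n-1} \E[\Delta H_k] \le (1 + o(1)) \sum_{k=1}^n \frac{\log k}{\pi^2 k} \sim \frac{(\log n)^2}{2 \pi^2}.\]

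The main obstacle is the sharp derivation of the constant $\pi^2$ in the layer-duration estimate. The crude bound $\mathsf{R}(\ell, c) \le \mathsf{p}_\q/c$ on its own yields only the circular estimate $\E[H_n] \le (1+o(1))\, \mathsf{p}_\q \sum_k \E[1/C_k]$, in which $\sum_k \E[1/C_k]$ is itself comparable to $\E[H_n]$ (each layer contributes order unity to this sum, by direct integration of $1/C$ against the layer-duration profile). A refined analysis must quantitatively relate the intra-layer dynamics of $C$ to the Lamperti representation $\xi^\uparrow$ of the conditioned Cauchy process $\Upsilon^\uparrow$, extracting the exact value $\E[\xi^\uparrow_1] = \pi^2/2$ at the discrete level---this is where the Cauchy-type tails demand the most delicate handling.
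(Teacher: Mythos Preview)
Your proposal has a genuine gap: you acknowledge yourself that the crude bound $\mathsf{R}(\ell,c)\lesssim \mathsf{p}_\q/c$ is circular, and the ``refined analysis'' you invoke to repair it is never carried out. Moreover, the layer-duration estimate you state carries the wrong constant: the time to complete one layer from half-perimeter $\ell$ is $\sim 2\ell/(\mathsf{p}_\q\log\ell)$, not $\pi^2\ell/(\mathsf{p}_\q\log\ell)$ (see Remark~\ref{rem:time_complete_first_layer}); the factor $\pi^2$ only appears later, when one averages $\log(P_k)/P_k$ over $k$ via $\E[k/P_k]\to 2/(\pi^2\mathsf{p}_\q)$. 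In fact, the layer-duration route is exactly how the paper proves the \emph{lower} bound (Proposition~\ref{prop:lower_bound_height_dual}); it is not well suited to the upper bound, because a lower bound on expected layer duration does not by itself yield an upper bound on $\E[H_n]$ without additional concentration input.

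The paper's proof of the upper bound bypasses the intra-layer dynamics entirely through an interpolation trick (Lemma~\ref{lem:bound_expectation_H_interpolated}). Rather than estimate the $\{0,1\}$-valued increment $\Delta H_n$, one introduces the smoothed height $H_n^f=H_n+f(D_n/(2P_n))$ for a $C^2$ non-increasing $f:[0,1]\to[0,1]$ with $f(0)=1$, $f(1)=0$, $f''(0)=f''(1)=0$ and $|f'|\le 1+\varepsilon$. The increment $\Delta H_n^f$ can then be written \emph{explicitly} conditionally on $(P_n,D_n)$ as a sum over the three types of peeling moves, and a Taylor expansion of $f$ yields
\[\Esc{\Delta H_n^f}{P_n}\le (1+3\varepsilon)^3\,\frac{\mathsf{p}_\q}{2}\,\frac{\log P_n+C}{P_n}\]
uniformly in $D_n$. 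Since $H_n\le H_n^f$, summing and using $\E\!\big[\tfrac{k}{P_k}\tfrac{\log P_k}{\log k}\big]\to \tfrac{2}{\pi^2\mathsf{p}_\q}$ (from Theorem~\ref{thm:convergence_processes_perimeter_volume} and the uniform integrability of Lemma~\ref{lem:bound_moments_perimeter_FPP}) gives the sharp constant directly. The careful choice of $f$ near but not equal to $x\mapsto 1-x$ is what extracts the precise prefactor $\mathsf{p}_\q/2$; the linear choice used in earlier works only gives a loose multiplicative constant. This is the ``computational trick'' alluded to in the paper, and it is substantially different from the direct estimate of $\Pr{\Delta H_n=1}$ that you attempt.
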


The proof relies on a technical estimate of an interpolated version of the sequence $(H_n)_{n \ge 0}$. Let $D_n$ be the number of edges in the boundary that are at height $H_n$. The other $2P_n - D_n$ edges are at height $H_{n}+1$.
For a non-increasing function $f:[0,1]\to[0,1]$, we introduce the random sequence
\[H_n^f \coloneqq H_n + f\left( \frac{D_n}{2P_n}\right) \quad \mbox{and} \quad \Delta H_n^f \coloneqq H_{n+1}^f - H_n^f.\]
The next result is inspired by \cite[Lemma 13]{Curien-Le_Gall:Scaling_limits_for_the_peeling_process_on_random_maps} in the case of random triangulations and by \cite[Lemma 4.3]{Budd-Curien:Geometry_of_infinite_planar_maps_with_high_degrees} in the case of Boltzmann maps with exponent $a \in (2, 5/2)$. There the function $f: x\mapsto 1-x$ was used to obtain an upper-bound on $\Es{H_n} \leq \Es{H_n^f}$ with an arbitrary multiplicative constant.
In order to obtain the correct constant one has to choose the function $f$ more carefully (see Figure \ref{fig:interpolation_H}).

\begin{lem}\label{lem:bound_expectation_H_interpolated}
If $f: [0,1] \to [0,1]$ is twice continuously differentiable with $f(0) = 1$, $f(1)=0$, $f''(0)=f''(1)=0$ and $0 \leq -f'(x) \leq 1+\varepsilon$ for some $\varepsilon\in(0,1)$ and all $x\in[0,1]$, then there exists a $C>0$ such that for all $n \geq 1$ we have
	\[ \Esc{\Delta H_n^f}{P_n} \leq (1+3\varepsilon)^3 \frac{\mathsf{p}_\q}{2} \frac{\log(P_n)+C}{P_n}. \]
\end{lem}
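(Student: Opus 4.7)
My plan is to compute the conditional expectation $\Esc{\Delta H_n^f}{(P_n, D_n) = (\ell, d)}$ by enumerating the peeling outcomes, and then take the supremum over $d \in \{1, \dots, 2\ell\}$ (so that the resulting bound involves $P_n$ only). Three cases arise, as recalled in Section \ref{sec:Doob_transform}: (a) with probability $p_k^{(\ell)}$, a new face of degree $2k$ is glued onto the peel edge, giving $(P_{n+1}, D_{n+1}) = (\ell + k - 1, d - 1)$; (b) with probability $p_{-k}^{(\ell)}$, the peel edge is identified with an edge on its $(h{+}1)$-height side, creating a finite hole of perimeter $2k$ and giving $(\ell - k - 1, d - 1)$; (c) with probability $p_{-k}^{(\ell)}$, the peel edge is identified on the $h$-height side, giving $(\ell - k - 1, d - 2k)$ provided $2k < d$. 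Whenever $D_{n+1}$ would drop to $0$ or below, the layer is completed, $H_{n+1} = H_n + 1$, and $D_{n+1}$ resets to $2 P_{n+1}$ since all remaining edges lie at the new minimum height.

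As a warm-up I would consider the simpler choice $f(x) = 1 - x$ (which fails $f''(0) = f''(1) = 0$ but illustrates the mechanism): then $\Delta H_n^f = \Delta H_n - \Delta(D/(2P))$, and using $f(1) = 0$ at layer completion one obtains
\[\Esc{\Delta H_n^f}{\ell, d} = -\Esc{\Delta(D/(2P))}{\ell, d} + \Prc{\text{layer completion}}{\ell, d}.\]
Splitting the drift of $D$ via the three cases and using the asymptotics $\nu(\pm k) \sim \mathsf{p}_\q k^{-2}$ together with $h^\uparrow(\ell) \sim 2\sqrt{\ell/\pi}$, the dominant contribution comes from case (c) with $2k < d$: the sum $\sum_{1 \le 2k < d} 2k \cdot p_{-k}^{(\ell)} \sim \mathsf{p}_\q \log d$ produces a drift of order $-\mathsf{p}_\q \log d$ in $D$, while the completion contributions of order $\mathsf{p}_\q/d$ to both $\Esc{\Delta(D/(2P))}{\ell, d}$ and $\Prc{\text{completion}}{\ell, d}$ cancel at leading order. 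One ends up with $\Esc{\Delta H_n^f}{\ell, d} \sim \frac{\mathsf{p}_\q \log d}{2\ell} \le \frac{\mathsf{p}_\q (\log \ell + O(1))}{2\ell}$, which explains both the shape of the bound and the sharp constant $\mathsf{p}_\q/2$.

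For a general $f$ satisfying the hypotheses, the identity $\Delta H_n^f = \Delta H_n - \Delta(D/(2P))$ is replaced by the Taylor expansion
\[f\bigl(\tfrac{D_{n+1}}{2P_{n+1}}\bigr) - f\bigl(\tfrac{d}{2\ell}\bigr) = f'\bigl(\tfrac{d}{2\ell}\bigr) \cdot \Delta\bigl(\tfrac{D}{2P}\bigr) + O\!\left(\|f''\|_\infty \bigl(\Delta(\tfrac{D}{2P})\bigr)^{2}\right)\]
on non-completion steps, together with the exact formula $\Delta H_n^f = 1 - f(d/(2\ell))$ on completion steps. The derivative bound $-f'(x) \le 1 + \varepsilon$ contributes one factor of $(1+\varepsilon)$ in the leading term; the asymptotic $\nu(\pm k) = \mathsf{p}_\q k^{-2}(1+o(1))$ contributes another once $k$ is large; and controlling the averages for $d/\ell$ in the bulk (away from the endpoints) yields a third. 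These three multiplicative errors combine to give the $(1+3\varepsilon)^3$ prefactor, while the conditions $f''(0) = f''(1) = 0$ are used to kill the Taylor remainder precisely where $D/(2P)$ can move by an $O(1)$ amount in one step.

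The main obstacle is controlling case (c) when $2k$ is close to $d$: here $D/(2P)$ moves macroscopically in a single step, so a naive first-order Taylor expansion fails. The matched conditions $f(0) = 1$ at the top and $f''(0) = 0$ at the bottom (and the symmetric ones at $1$) are engineered precisely to absorb these boundary jumps into higher-order corrections, and one needs to check carefully that no leftover term of order $\log \ell/\ell$ leaks from this regime. Fitting all such boundary errors inside the tight $(1+3\varepsilon)^3$ envelope without spoiling the leading constant $\mathsf{p}_\q/2$ is the technically delicate part of the computation.
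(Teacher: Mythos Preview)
Your overall strategy matches the paper's: enumerate peeling outcomes, Taylor-expand $f$ around $y = D_n/(2P_n)$, and bound the resulting sums. But your warm-up contains a real error that, if carried into the general case, would prevent you from reaching the sharp constant $\mathsf{p}_\q/2$.

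In the warm-up you implicitly use $\E[\Delta(D/(2P))] \approx \E[\Delta D]/(2\ell)$, which ignores the variation of $P$. This is not innocent: the large jumps of $P$ in cases (a) and (b) each produce contributions of order $\mathsf{p}_\q\log\ell/\ell$ to $\E[\Delta H_n^f]$, of the \emph{same} order as the case-(c) term you single out. In the paper's decomposition (its three sums corresponding to your (a), (c), (b)) the leading contributions are respectively $y|f'(y)|$, $\tfrac{1-y}{2}|f'(y)|$, and $-\tfrac{y}{2}|f'(y)|$ times $\mathsf{p}_\q\log\ell/\ell$. The constant $\tfrac12$ comes from the identity $y+\tfrac{1-y}{2}-\tfrac{y}{2}=\tfrac12$, i.e.\ from a \emph{cancellation} between the new-face term (a) and the left-identification term (b). Your ``case (c) dominates'' is wrong: case (c) alone gives coefficient $\tfrac{1-y}{2}$, and your heuristic answer $\mathsf{p}_\q\log d/(2\ell)$ badly underestimates the true value $\sim\mathsf{p}_\q\log\ell/(2\ell)$ when $d$ is small.

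The paper does not rely on this cancellation surviving the $(1\pm\varepsilon)$ perturbations exactly. Instead it splits on $y<\varepsilon$ versus $y\ge\varepsilon$: for small $y$ one discards the negative sum (b) altogether, since $\tfrac{1+y}{2}|f'(y)|\le\tfrac{(1+\varepsilon)^2}{2}$ already; for $y\ge\varepsilon$ one proves a \emph{lower bound} on the absolute value of the negative contribution from (b) and subtracts it, yielding $[(1+\varepsilon)(1+y)-(1-\varepsilon)y]|f'(y)|\le(1+3\varepsilon)^2$. This case split and the role of left-identification as a negative term are the mechanism behind the $(1+3\varepsilon)^3$ prefactor; your description of ``three multiplicative $\varepsilon$ errors'' does not correspond to what actually happens. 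Your identified ``main obstacle'' (case (c) near completion) is in fact benign---the paper handles it uniformly via the extended $f$ and a global Taylor bound, which is where the endpoint conditions on $f''$ enter.
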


\begin{figure}[!ht]
\begin{center}
\includegraphics[width=.5\linewidth]{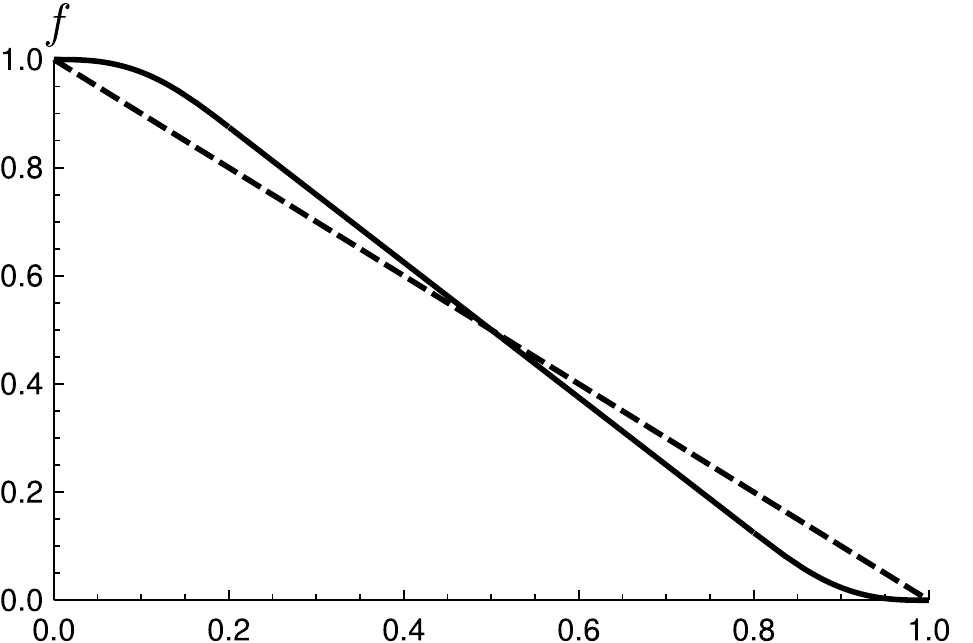}
\caption{The solid curve corresponds to a function $f$ satisfying the hypotheses of Lemma \ref{lem:bound_expectation_H_interpolated} with $\varepsilon=0.2$, while the dashed lined depicts $x\mapsto1-x$.}
\label{fig:interpolation_H}
\end{center}
\end{figure}

\begin{proof}
	We will establish the equivalent bound on the conditional expectation $\E[\Delta H_n^f \mid P_n=p, D_n=\ell]$ uniformly in $1\leq \ell\leq 2p$.
	Unless stated otherwise the correction terms $O(p^{-1})$ will be uniform in $\ell$.
	For convenience we will use the notation $y \coloneqq \ell/(2p) \in (0,1]$ and we will extend $f$ to a function $\R\to[0,1]$ by setting $f(x) = 1$ for $x<0$ and $f(x)=0$ for $x>1$. 
	
	We claim that $\E[\Delta H_n^f \mid P_n=p, D_n=\ell]$ can be expressed as
	\begin{align} \Esc{\Delta H_n^f}{P_n=p, D_n=\ell } &= \sum_{k=0}^\infty\nu(k) \frac{h^\uparrow(p+k)}{h^\uparrow(p)} \left[ f\left(\frac{\ell-1}{2p+2k}\right)-f\left(\frac{\ell}{2p}\right)\right] \label{eq:deltaH1}\\
	&+\sum_{k =1}^{p-1}\frac{\nu(-k)}{2} \frac{h^\uparrow(p-k)}{h^\uparrow(p)} \left[ f\left(\frac{\ell-2k}{2p-2k}\right)-f\left(\frac{\ell}{2p}\right)\right]\label{eq:deltaH2}\\
	&+\sum_{k =1}^{p-1}\frac{\nu(-k)}{2} \frac{h^\uparrow(p-k)}{h^\uparrow(p)} \left[  f\left(\frac{\ell-1}{2p-2k}\right)-f\left(\frac{\ell}{2p}\right)\right].\label{eq:deltaH3}
	\end{align}
	Using the law of the Markov process $(P_n,D_n,H_n)$, which was explicitly identified in \cite[Section 4.2]{Budd-Curien:Geometry_of_infinite_planar_maps_with_high_degrees}, this can be checked as follows.
	First of all, if $\ell=1$ then necessarily $\Delta H_{n+1} = 1$ and $D_{n+1} = 2P_{n+1}$. Therefore $\Delta H_n^f = 1-f(1/(2p))$, which agrees with each of the terms in brackets since we set $f(x)=1$ for $x\leq 0$, and hence with the sum. 
	Suppose now $\ell \geq 2$.
	If a new face of half-degree $k+1$ is discovered, then $P_{n+1} = p+k$ and $D_{n+1} = \ell-1$, which explains the first line (\ref{eq:deltaH1}).	
	If the peel edge is identified with an edge on its right, then $P_{n+1} = p-k$ for some $1\leq k<p$ and either $D_{n+1} = \ell - 2k$ and $\Delta H_{n} = 0$ or $D_{n+1}=2P_{n+1}$ and $\Delta H_{n} = 1$, depending on the sign of $\ell - 2k$. 
	The second line (\ref{eq:deltaH2}) incorporates both these situations.
	Finally, if the peel edge is identified with an edge on its left, then $P_{n+1} = p-k$ for some $1\leq k<p$ and $\Delta H_n=0$ and either $D_{n+1} = \ell - 1$ or $D_{n+1}=2P_{n+1}$, depending on which is smallest.
	Both situations are taken into account in the last line (\ref{eq:deltaH3}).
	
	To bound the expectation value we rely on the following estimates, which are easily checked:
	\begin{itemize}
		\item By Taylor's theorem there is a $c>0$ such that $|f(y)-f(x)-f'(x)(y-x)| < c (y-x)^2$ for all $x,y\in\R$.
		\item There exists an $N>0$ such that $(1-\varepsilon) < |k|^2\nu(k)/\mathsf{p}_\q < (1+\varepsilon) $ for all $|k|> N$.
		\item For any $k > -p$ we have 
		\[ \sqrt{1+\frac{k}{p}}\left(1-\frac{1}{p}\right) \leq \frac{h^\uparrow(p+k)}{h^\uparrow(p)} \leq \sqrt{1+\frac{k}{p}}\left(1+\frac{1}{p}\right).\]
	\end{itemize}
	The first $N$ terms in each of the sums together contribute $O(p^{-1})$ (uniformly in $\ell$), so it suffices to bound the sums restricted to $|k| \geq N$.
	For simplicity we may also assume $p > 2N$.
	
	Starting with the first sum (\ref{eq:deltaH1}), which has only positive terms, the aforementioned estimates lead to
	\begin{align*}
	\sum_{k=N}^\infty\nu(k) \frac{h^\uparrow(p+k)}{h^\uparrow(p)} &\left[ f\left(\frac{\ell-1}{2p+2k}\right)-f\left(\frac{\ell}{2p}\right)\right] \\
	&\leq (1+\varepsilon)\left(1+\frac{1}{p}\right) \sum_{k=N}^\infty \frac{\mathsf{p}_\q}{k^2}\sqrt{1+\frac{k}{p}}\left[|f'(y)|\frac{1+yk}{p+k}+c \left(\frac{1+k}{p+k}\right)^2 \right],
	\end{align*}
	where we used that $\frac{\ell}{2p}-\frac{\ell-1}{2p+2k} \leq \frac{1+yk}{p+k} \leq \frac{1+k}{p+k}$.
	The sum of the quadratic term is $O(p^{-1})$ and therefore the full sum is bounded by
	\[ (1+\varepsilon)\mathsf{p}_\q\, y |f'(y)|  \, \frac{1}{p} \int_{\frac{N-1}{p}}^\infty \frac{\mathrm{d}x}{x\sqrt{1+x}} + O(p^{-1}) \leq (1+\varepsilon)\mathsf{p}_\q\, y|f'(y)|\, \frac{\log(p)}{p} + O(p^{-1}). \]
	
	The second sum (\ref{eq:deltaH2}) also has positive terms, so
	\begin{align*}
	\sum_{k=N}^{p-1}\frac{\nu(-k)}{2} \frac{h^\uparrow(p-k)}{h^\uparrow(p)} & \left[ f\left(\frac{\ell-2k}{2p-2k}\right)-f\left(\frac{\ell}{2p}\right)\right] \\
	&\leq \frac{(1+\varepsilon)}{2}\left(1+\frac{1}{p}\right) \sum_{k=N}^{p-1} \frac{\mathsf{p}_\q}{k^2}\sqrt{1-\frac{k}{p}}\left[|f'(y)|(1-y)\frac{k}{p}+c \left(\frac{k}{p}\right)^2 \right],
	\end{align*}
	where we used that  $\frac{\ell}{2p}-\frac{\ell-2k}{2p-2k} \leq \frac{(1-y)k}{p} \leq \frac{k}{p}$.
	Again the quadratic term yields a contribution $O(p^{-1})$ and therefore the full sum is bounded by
	\[ (1+\varepsilon)\mathsf{p}_\q \, \frac{1-y}{2}|f'(y)| \,\frac{1}{p}  \int_{\frac{N-1}{p}}^1 \frac{\sqrt{1-x}}{x}\mathrm{d}x + O(p^{-1}) \leq (1+\varepsilon)\mathsf{p}_\q \, \frac{1-y}{2}|f'(y)| \frac{\log(p)}{p} + O(p^{-1}). \]
	
	The last sum (\ref{eq:deltaH3}) may have both positive and negative terms (depending on the sign of $p-\ell k$), so we should proceed more carefully.
	Since the terms with $p/2 \leq k \leq p-1$ are $O(p^{-2})$ uniformly in $k$ and $\ell$, it suffices to consider the sum restricted to $N \leq k \leq p/2$.
	For these values of $k$, $\frac{\ell}{2p}-\frac{\ell-1}{2p-2k} \leq \frac{1}{p}$ and therefore the contribution of the positive terms to the sum is $O(p^{-1})$. 
	
	We now distinguish two cases: $y \in (0,\varepsilon)$ and $y \in [\varepsilon,1]$.
	In the first case we are done, because taking into account just the positive contributions already yields the uniform bound	
	\begin{align*}
	\Es{\Delta H_n^f \,\big| P_n=p, D_n=\ell } &\leq \frac{\mathsf{p}_\q}{2} (1+\varepsilon)(1+y)|f'(y)|\, \frac{\log(p)}{p} + O(p^{-1}) \leq (1+\varepsilon)^3 \frac{\mathsf{p}_\q}{2} \frac{\log(p)}{p} + O(p^{-1}).
	\end{align*}
	
	In the case $y \in [\varepsilon,1]$ we still need a lower-bound on the absolute value of the negative terms. 
	Assuming we have chosen $N > 1/\varepsilon$ all terms with $k \geq N \geq 2p/\ell$ are negative and we have
	\begin{align*}
	\sum_{k=N}^{p/2}\frac{\nu(-k)}{2} \frac{h^\uparrow(p-k)}{h^\uparrow(p)} &\left| f\left(\frac{\ell-1}{2p-2k}\right)-f\left(\frac{\ell}{2p}\right)\right| \\
	&\geq \frac{(1-\varepsilon)}{2}\left(1-\frac{1}{p}\right) \sum_{k=N}^{p/2} \frac{\mathsf{p}_\q}{k^2} \sqrt{1-\frac{k}{p}}\left[|f'(y)|\left(y\frac{k}{p}-\frac{1}{2p}\right)-c \left(\frac{2k}{p}\right)^2 \right],
	\end{align*}
	where we used that $\frac{\ell-1}{2p-2k}-\frac{\ell}{2p} \geq \frac{\ell k-p}{2p^2} = y\frac{k}{p} - \frac{1}{2p}$ and $|\frac{\ell}{2p}-\frac{\ell-1}{2p-2k}| \leq 2k/p$.
	As before the quadratic part yields a contribution $O(p^{-1})$ and the full sum is bounded from below by
	\[
	(1-\varepsilon) \mathsf{p}_\q\, \frac{y}{2}|f'(y)|\,\frac{1}{p}  \int_{N/p}^{1/2} \frac{\sqrt{1-x}}{x}\mathrm{d}x + O(p^{-1}) \geq (1-\varepsilon) \mathsf{p}_\q\, \frac{y}{2}|f'(y)|\, \frac{\log(p)}{p} + O(p^{-1}).
	\]
	Putting the three sums together and using that $y\in [\varepsilon,1]$ and $|f'(y)| \leq 1+\varepsilon$ we find
	\begin{align*}
	\Esc{\Delta H_n^f}{P_n=p, D_n=\ell } &\leq \frac{\mathsf{p}_\q}{2} ( (1+\varepsilon)(1+y) - (1-\varepsilon)y )|f'(y)|\, \frac{\log(p)}{p} + O(p^{-1}) \\
	&\leq (1+3\varepsilon)^2 \frac{\mathsf{p}_\q}{2} \frac{\log(p)}{p} + O(p^{-1}),
	\end{align*}
	uniformly in $1\leq \ell\leq 2p$.
\end{proof}

The proof of Proposition \ref{prop:upper_bound_expectation_H} is now a simple consequence of Lemma \ref{lem:bound_expectation_H_interpolated}.

\begin{proof}[Proof of Proposition \ref{prop:upper_bound_expectation_H}]
It is not hard to see that for any $\varepsilon \in (0,1)$ there exists a function $f$ satisfying the conditions of Lemma \ref{lem:bound_expectation_H_interpolated}. 
Since $H_n \leq H_n^f$ we immediately obtain that for any $\varepsilon \in (0,1)$ there exists a $C>0$ such that
\begin{equation}\label{eq:Hnestimate} 
\Es{H_n} \leq \Es{H_n^f} = \sum_{k=0}^{n-1} \Es{\Delta H_k^f} \leq (1+2\varepsilon)^2 \frac{\mathsf{p}_\q}{2} \sum_{k=0}^{n-1} \Es{\frac{\log(P_k)+C}{P_k}}.
\end{equation}
We claim that for any $C > 0$,
\[\lim_{k \to \infty} \frac{k}{\log k} \Es{\frac{\log(P_k)+C}{P_k}}
= \Es{(\mathsf{p}_\q \Upsilon^\uparrow_1)^{-1}}
= \frac{2}{\pi^2 \mathsf{p}_\q}.\]
Recall from Theorem \ref{thm:convergence_processes_perimeter_volume} that $k/P_k$ converges in law to $(\mathsf{p}_\q \Upsilon^\uparrow_1)^{-1}$ as $k \to \infty$ so $(k (\log(P_k)+C))/(P_k \log k)$ as well. Let us prove that this sequence is uniformly integrable. First, according to Lemma \ref{lem:bound_moments_perimeter_FPP}, the sequence $(k/P_k)_{k \ge 0}$ is bounded in $\mathrm{L}^q$ for all $q < 3/2$, and so $((C k)/(P_k \log k))_{k \ge 2}$ as well. Next, fix $\varepsilon \in (0,1)$, and observe that since $x \mapsto x/\log x$ is increasing on $[\ex, \infty)$, we have
\[\frac{k \log P_k}{P_k \log k} \ind{\frac{k \log P_k}{P_k \log k} \ge 1/\varepsilon}
\le \frac{k \log P_k}{P_k \log k} \ind{\frac{P_k}{\log P_k} \le \frac{\varepsilon k}{\log(\varepsilon k)}}
\le \frac{k \log P_k}{P_k \log k} \ind{P_k \le \varepsilon k}
\le \frac{k}{P_k},\]
and the right-hand side is bounded in $\mathrm{L}^q$ for all $q < 3/2$.

The bound \eqref{eq:Hnestimate} and the fact that $(\log n)^{-2} \sum_{k=1}^n \log(k)/k \to 1/2$ as $n \to \infty$ then yield
\[\limsup_{n \to \infty} \frac{1}{(\log n)^2} \Es{H_n}
\le \frac{\mathsf{p}_\q}{2} \cdot \frac{2}{\pi^2 \mathsf{p}_\q} \cdot \frac{1}{2},\]
which concludes the proof.
\end{proof}

\subsection{The lower bound}
\label{sec:lower_bound_height}

In this subsection, we aim at showing the following result, which completes the proof of Proposition \ref{prop:limit_height_dual}.

\begin{prop}\label{prop:lower_bound_height_dual}
For every $\varepsilon > 0$, we have
\[\Pr{\frac{H_n}{(\log n)^2} \ge \frac{1-\varepsilon}{2 \pi^2}} \cv 1.\]
\end{prop}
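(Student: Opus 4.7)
The desired inequality is equivalent, by monotonicity of $(H_n)_{n \ge 0}$ and the identity $\{H_n \ge r\} = \{\theta_r \le n\}$ with $\theta_r = \inf\{i \ge 0 : H_i \ge r\}$, to showing that $\theta_r \le \exp\bigl(\pi\sqrt{2r/(1-\varepsilon)}\bigr)$ with probability tending to $1$ as $r \to \infty$. I will therefore bound the cumulative peeling time $\theta_r$ by analyzing the peeling-by-layers process one layer at a time.

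The core ingredient is a \emph{one-layer time estimate}: given a peeling state $\overline{\mathfrak{e}}$ with half-perimeter $p$ and $D$ equal to the full boundary (i.e.~just after the previous layer has been completed), the number $L(p)$ of subsequent peeling steps until $H$ strictly increases satisfies $L(p) \le (\pi^2 + \delta) p / \log p$ with probability $\to 1$ as $p \to \infty$. The heuristic is clear: when peeling the unique edge at the lower height $h$ that is adjacent to an edge at height $h+1$, revealing a new face decreases $D$ by $1$, while a gluing to the right removes a heavy-tailed chunk of lower-boundary edges (and a gluing to the left increases $D$ by creating more edges at height $h+1$). Hence the variable $D$ behaves as an asymmetric random walk in the domain of attraction of an asymmetric Cauchy process, with a strictly larger tail on the negative side ($c_+ > c_-$ in the notation of Proposition \ref{prop:intro_tail_first_ladder_time_Cauchy_walk}), and the layer ends at its first passage to $\Z_-$. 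Combining Proposition \ref{prop:tail_first_ladder_time_Cauchy_walk_positive_tail} with the explicit transition probabilities from Section \ref{sec:def_peeling_general} then yields the bound on $L(p)$. The analysis is cleanest in the half-plane model $\Map^{(\infty)}$ of Section \ref{sec:half_plane_map} (where the perimeter is unconditioned), and we transfer to $\Map_\infty$ via the absolute-continuity relation valid for the first $o(p)$ steps, during which the conditioning on positivity of $P$ plays no role.

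With this estimate in hand, the iteration is straightforward. By the Markov property of the peeling, conditionally on the past at time $\theta_k$, the difference $\theta_{k+1} - \theta_k$ is distributed as $L(P_{\theta_k})$. Since $L(p) = o(p)$ and the perimeter process $(P_n)$ has increments of typical size $O(1)$ on scales below $p$, Theorem~\ref{thm:convergence_processes_perimeter_volume} gives $P_{\theta_k} \asymp \theta_k$ with high probability, and thus the recursion
\[
\log \theta_{k+1} - \log \theta_k \;\le\; \frac{\pi^2 + \delta}{\log \theta_k}(1+o(1)),
\]
which integrates to $(\log \theta_r)^2 \le 2(\pi^2+\delta) r (1+o(1))$ and hence $\theta_r \le \exp\bigl(\pi\sqrt{2r}(1+\delta')\bigr)$ for any $\delta'>0$ and $r$ large. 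Choosing $r = r_n := \lceil (1-\varepsilon)(\log n)^2/(2\pi^2)\rceil$ gives $\theta_{r_n} \le n$ with high probability, proving the proposition. The main obstacle is the one-layer time estimate: tracking the asymmetric Cauchy walk for $D$ with uniform control as $p \to \infty$, and above all pinning down the leading constant $\pi^2$ rather than an unspecified $C$, require a careful parallel with the computation of $\E[(\mathsf{p}_\q \Upsilon^\uparrow_1)^{-1}] = 2/(\pi^2 \mathsf{p}_\q)$ from Remark \ref{rem:moment_inverse_Cauchy_positive}, which is the analytic source of the constant appearing in the upper bound of Proposition \ref{prop:upper_bound_expectation_H} and must be matched exactly in the lower bound.
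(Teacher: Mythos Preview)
Your overall strategy---bound the time to complete one layer and then iterate---matches the paper's, and your one-layer lemma is essentially Lemma~\ref{lem:time_complete_first_layer}. However, two concrete errors prevent the argument as written from producing the constant $\tfrac{1}{2\pi^2}$.

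First, the one-layer constant is wrong. Under $\P_\infty^{(\ell)}$ (half-perimeter $\ell$), the paper shows in Lemma~\ref{lem:time_complete_first_layer} that $\theta_1 \le (1+\varepsilon)\,\tfrac{2\ell}{\mathsf{p}_\q \log \ell}$ with high probability; the constant is $\tfrac{2}{\mathsf{p}_\q}$, not $\pi^2$. This follows from the half-plane computation: the non-decreasing process $A_n'$ counting swallowed boundary edges satisfies $\P(A_1'>k)\sim \mathsf{p}_\q/k$, hence $A_n' \sim \mathsf{p}_\q\, n\log n$, so the layer completes at time $\sim \tfrac{2\ell}{\mathsf{p}_\q \log \ell}$. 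Proposition~\ref{prop:tail_first_ladder_time_Cauchy_walk_positive_tail} is not the right tool here (it concerns \emph{tails} of first-passage times, not the sharp growth rate), and no $\pi^2$ appears at this stage.

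Second, and more seriously, your recursion $\log\theta_{k+1}-\log\theta_k \le (\pi^2+\delta)/\log\theta_k$ cannot be extracted from ``$P_{\theta_k}\asymp\theta_k$''. With the correct one-layer bound one gets
\[
\log\theta_{k+1}-\log\theta_k \;\lesssim\; \frac{2}{\mathsf{p}_\q}\,\frac{P_{\theta_k}}{\theta_k}\,\frac{1}{\log\theta_k},
\]
and by Theorem~\ref{thm:convergence_processes_perimeter_volume} the ratio $P_{\theta_k}/\theta_k$ is asymptotically distributed as $\mathsf{p}_\q\Upsilon^\uparrow_1$, which is \emph{unbounded}. No pointwise upper bound on this ratio can yield the sharp constant; the $\pi^2$ arises only after \emph{averaging} over many scales, via $\E[(\mathsf{p}_\q\Upsilon^\uparrow_1)^{-1}]=\tfrac{2}{\pi^2\mathsf{p}_\q}$. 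The paper implements this averaging by switching to deterministic times $t_k=\lfloor\exp((2Ck)^{1/2})\rfloor$, bounding $H_{t_{k+1}}-H_{t_k}$ from below by roughly $C\,\tfrac{\mathsf{p}_\q}{2}\,\tfrac{t_k}{P_{t_k}}$ (via Corollary~\ref{cor:time_complete_k_layers}), and then proving a law of large numbers for $\tfrac{1}{N}\sum_k t_k/P_{t_k}$ through a decorrelation estimate (Lemma~\ref{lem:approximation_Delta_H_t_k}, which rests on Lemma~\ref{lem:decorrelation_FPP}). It is this concentration step---entirely absent from your sketch---that converts the random factors $t_k/P_{t_k}$ into the deterministic constant $\tfrac{1}{2\pi^2}$.
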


Recall that for all $k \ge 1$, we denote by $\theta_k$ the first time $n$ when all the edges on the boundary after $n$ peeling steps are at height at least $k$, for the peeling algorithm introduced earlier in this section. Recall also from Section \ref{sec:Doob_transform} the law $\P_\infty^{(\ell)}$ of an infinite rooted bipartite Boltzmann map with a root-face of degree $2\ell$, for $\ell \ge 1$. The main ingredient of the proof of Proposition \ref{prop:lower_bound_height_dual} is the following.

\begin{lem}\label{lem:time_complete_first_layer}
For every $\varepsilon \in (0,1)$,
\[\lim_{\ell \to \infty} \P_\infty^{(\ell)}\left(\frac{\mathsf{p}_\q \log \ell}{2\ell} \cdot \theta_1 \le 1+\varepsilon \quad\text{and}\quad \frac{P_{\theta_1}}{\ell} \in [1-\varepsilon, 1+\varepsilon]\right) = 1.\]
\end{lem}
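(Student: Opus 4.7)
The plan is to track the joint Markov chain $(P_n, D_n)_{n \ge 0}$ under $\P_\infty^{(\ell)}$ associated with the peeling-by-layers algorithm, where $D_n$ denotes the number of boundary edges of $\overline{\mathfrak{e}}_n$ at the current minimal height $H_n$. By property $(H)$ these edges form a contiguous arc and the peel edge sits at its left endpoint, so $(P_n, D_n)$ is Markov with $(P_0, D_0) = (\ell, 2\ell)$, and $\theta_1$ equals the first $n$ for which $D_n = 0$. The heart of the argument is a drift estimate showing $\Esc{\Delta_n}{(P_n,D_n)} \sim \mathsf{p}_\q \log \ell$, with $\Delta_n := D_n - D_{n+1}$, as long as the chain stays in a regime where $P_n$ and $D_n$ are both of order $\ell$. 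Combined with a standard second-moment concentration this upper-bounds $\theta_1$ by $(1+\varepsilon) \cdot \frac{2\ell}{\mathsf{p}_\q \log \ell}$ with high probability, while $P_{\theta_1}/\ell \to 1$ falls out of the Cauchy-type scaling of $P$ over a time window $o(\ell)$.

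The first step is to enumerate the cases for $\Delta_n$ from the peeling probabilities \eqref{eq:qp}--\eqref{eq:qp2}. A positive perimeter jump (new face) yields $\Delta_n = 1$; a right-identification with finite hole of half-perimeter $k$ sweeps $2k+2$ boundary edges starting at the peel edge, which lie entirely in the height-$H_n$ arc when $2k+1 \le D_n - 1$, giving $\Delta_n = 2k+2$ (and $\Delta_n = D_n$ otherwise); a left-identification only consumes height-$(H_n+1)$ edges, except in a wrap-around case which I would check is negligible. Dropping the (non-negative) positive-jump and left-identification contributions, the drift is lower-bounded by
\[\Esc{\Delta_n}{(P_n, D_n) = (p, d)} \;\ge\; \sum_{k=0}^{\lfloor (d-2)/2 \rfloor} (2k+2) \cdot \frac{\nu(-k-1)}{2} \cdot \frac{h^\uparrow(p-k-1)}{h^\uparrow(p)},\]
which, using the tail $\nu(-k-1) \sim \mathsf{p}_\q/k^2$ and $h^\uparrow(p-k-1)/h^\uparrow(p) \to 1$ uniformly for $k = o(p)$, evaluates to $\mathsf{p}_\q \log d \cdot (1 + o(1))$. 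A parallel calculation with one extra factor of $k$ yields $\Esc{\Delta_n^2}{(p,d)} = O(d) = O(\ell)$.

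Set $T_\varepsilon = \lceil (1+\varepsilon/2) \cdot 2\ell / (\mathsf{p}_\q \log \ell) \rceil$. By Proposition \ref{prop:convergence_walk_Cauchy} applied to the Doob-transformed walk $P = S^\uparrow$ started from $\ell$, with probability tending to $1$ we have $\max_{n \le T_\varepsilon} |P_n - \ell| \le \varepsilon \ell/2$, since $T_\varepsilon = O(\ell/\log \ell) = o(\ell)$. Restricting to this good event, the drift bound applies and Chebyshev's inequality for the compensated sum $M_n := \sum_{k<n}(\Delta_k - \Esc{\Delta_k}{\mathcal{F}_k})$, together with $\Var{M_{T_\varepsilon}} \le T_\varepsilon \cdot O(\ell) = O(\ell^2/\log \ell)$, gives $|M_{T_\varepsilon \wedge \theta_1}| = o_\P(\ell)$. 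Were $\theta_1 > T_\varepsilon$ then $\sum_{n<T_\varepsilon}\Delta_n = 2\ell - D_{T_\varepsilon} < 2\ell$, contradicting the lower bound $\sum_{n<T_\varepsilon}\Delta_n \ge T_\varepsilon \mathsf{p}_\q \log \ell(1+o(1)) \ge (1+\varepsilon/3) \cdot 2\ell$ obtained from the drift minus $|M_{T_\varepsilon}|$. Hence $\theta_1 \le T_\varepsilon$ with probability tending to $1$, and $|P_{\theta_1}/\ell - 1| \le \varepsilon/2$ on the same event. The main obstacle is the drift estimate, namely verifying that the boundary contributions (left-identifications wrapping past the height-$(H_n+1)$ arc, right-identifications with $2k+1 \ge D_n$, and finite-$p$ corrections from $h^\uparrow(p-k-1)/h^\uparrow(p)$ when $k$ is comparable to $p$) are genuinely lower order than $\mathsf{p}_\q \log d$; this is conceptually parallel to Lemma \ref{lem:bound_expectation_H_interpolated} in the previous subsection, but the inequalities now go in the opposite direction and with finite-$\ell$ parameters rather than asymptotics in $n$.
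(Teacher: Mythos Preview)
Your overall strategy---track $(P_n,D_n)$, lower-bound the drift of $\Delta_n = D_n - D_{n+1}$, control the martingale part by Chebyshev, and argue by contradiction---is sound and close in spirit to the paper's proof, but there is a genuine gap in the passage from the drift estimate to the final inequality. Your lower bound reads $\Esc{\Delta_n}{P_n=p,\,D_n=d} \ge \mathsf{p}_\q \log d\,(1+o(1))$, which depends on $d$; yet in the contradiction step you replace $\log D_n$ by $\log \ell$ to obtain $\sum_{n<T_\varepsilon}\Esc{\Delta_n}{\mathcal F_n} \ge T_\varepsilon\,\mathsf{p}_\q\log\ell\,(1+o(1))$. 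On the event $\{\theta_1>T_\varepsilon\}$ you only know $D_n>0$, not that $D_n$ remains of order $\ell$, and since $D_n$ is strictly decreasing it may spend a non-negligible portion of the window with $\log D_n \ll \log\ell$. The boundary terms you flag as ``the main obstacle'' are indeed lower order; this circularity is the real issue.

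There is a short fix within your framework. Your case analysis gives $\Delta_n\ge 1$ in every step, so once $D_n$ drops below $\ell^{1-\delta}$ the remaining time to $\theta_1$ is at most $\ell^{1-\delta}=o(\ell/\log\ell)$. Splitting on the stopping time $\sigma=\inf\{n:D_n\le\ell^{1-\delta}\}$: either $\sigma>T_\varepsilon$, in which case $\log D_n\ge(1-\delta)\log\ell$ throughout and your argument runs (take $\delta$ small depending on $\varepsilon$); or $\sigma\le T_\varepsilon$, and then $\theta_1\le\sigma+\ell^{1-\delta}\le T_\varepsilon(1+o(1))$ directly.

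For comparison, the paper avoids this circularity altogether by dominating $A_n=2\ell-D_n$ from below by a process $A_n'$ whose increments do \emph{not} depend on $D_n$: set $\Delta A_n'=2k+2$ for a right-identification with hole of half-perimeter $k$ and $\Delta A_n'=1$ otherwise, so that $A_n\ge A_n'$ for $n<\theta_1$. Under the half-plane law $\P^{(\infty)}$ these increments are i.i.d.\ with tail $\sim\mathsf{p}_\q/(2j)$, and the one-sided Cauchy LLN gives $A_n'/(n\log n)\to\mathsf{p}_\q$ and hence $\inf\{n:A_n'\ge 2\ell\}\sim 2\ell/(\mathsf{p}_\q\log\ell)$; the result is then transferred to $\P_\infty^{(\ell)}$ by the Radon--Nikodym ratio $h^\uparrow(P_n)/h^\uparrow(\ell)$, which stays close to $1$ on the good perimeter event. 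Your martingale/Chebyshev route is a legitimate alternative that avoids the change of measure, at the cost of the extra stopping-time argument above.
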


\begin{rem}\label{rem:time_complete_first_layer}
This lemma will suffice to prove Proposition \ref{prop:lower_bound_height_dual} and so Proposition \ref{prop:limit_height_dual}. However it is not difficult to see (e.g. with the argument developed below) that such an upper bound on $\theta_1$ and the limit of the height $H_n$ from Proposition \ref{prop:limit_height_dual} imply that for every $\varepsilon \in (0,1)$,
\[\lim_{\ell \to \infty} \P_\infty^{(\ell)}\left(\frac{\mathsf{p}_\q \log \ell}{2\ell} \cdot \theta_1 \in[1-\varepsilon, 1+\varepsilon] \quad\text{and}\quad \frac{P_{\theta_1}}{\ell} \in [1-\varepsilon, 1+\varepsilon]\right) = 1.\]
In words, the above display indeed confirms the heuristic presented at the beginning of this section, stating that the time needed for the peeling by layers to complete a full turn around a boundary of perimeter $2 \ell$ is of order $  \frac{2 \ell }{ \mathsf{p}_{ \mathbf{q}} \log \ell} $.
\end{rem}

The last lemma has an immediate corollary on the time needed to complete $k$ layers:

\begin{cor}\label{cor:time_complete_k_layers}
For every $\varepsilon \in (0,1)$ and every $k \in \N$,
\[\lim_{\ell \to \infty} \P_\infty^{(\ell)}\left(\frac{\mathsf{p}_\q \log \ell}{2\ell} \cdot \frac{\theta_k}{k} \le 1+\varepsilon\right) =1.\]
\end{cor}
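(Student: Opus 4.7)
The plan is to proceed by induction on $k$, with base case $k=1$ being exactly Lemma~\ref{lem:time_complete_first_layer}. For the inductive step, the key tool will be the spatial Markov property of the peeling exploration: conditionally on $\overline{\mathfrak{e}}_{\theta_1}$, the remaining peeling by layers is a fresh peeling by layers of a map distributed as $\P_\infty^{(P_{\theta_1})}$, so that $\theta_k - \theta_1$ under $\P_\infty^{(\ell)}$, given $P_{\theta_1} = p$, has the same law as $\theta_{k-1}$ under $\P_\infty^{(p)}$.

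Fix $\varepsilon \in (0,1)$ and pick $\delta \in (0,\varepsilon)$ small enough that $(1+\delta)^2/(1-\delta) \le 1+\varepsilon$. Lemma~\ref{lem:time_complete_first_layer} ensures that the event
\[A_\ell \coloneqq \left\{\theta_1 \le (1+\delta) \cdot \frac{2\ell}{\mathsf{p}_\q \log \ell} \text{ and } P_{\theta_1} \in [(1-\delta)\ell, (1+\delta)\ell]\right\}\]
has $\P_\infty^{(\ell)}$-probability tending to $1$. The induction hypothesis at rank $k-1$, applied with $\delta$ in place of $\varepsilon$, would give that
\[g(p) \coloneqq \P_\infty^{(p)}\!\left(\theta_{k-1} > (k-1)(1+\delta) \cdot \frac{2p}{\mathsf{p}_\q \log p}\right) \cv[p] 0.\]

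Combining these via the Markov property by conditioning on $P_{\theta_1}$ yields
\[\P_\infty^{(\ell)}\!\left(A_\ell \cap \left\{\theta_k - \theta_1 > (k-1)(1+\delta) \cdot \frac{2 P_{\theta_1}}{\mathsf{p}_\q \log P_{\theta_1}}\right\}\right) \le \sup_{p \ge (1-\delta)\ell} g(p) \cv[\ell] 0,\]
because $(1-\delta)\ell \to \infty$ with $\ell$. On the resulting good event, using $P_{\theta_1} \le (1+\delta)\ell$ together with $\log P_{\theta_1} \ge (1-\delta)\log \ell$ (which holds for $\ell$ large since $|\log(1-\delta)| \le \delta \log \ell$ eventually), one would bound
\[\theta_k = \theta_1 + (\theta_k - \theta_1) \le \left[(1+\delta) + (k-1)\frac{(1+\delta)^2}{1-\delta}\right] \cdot \frac{2\ell}{\mathsf{p}_\q \log \ell} \le k(1+\varepsilon) \cdot \frac{2\ell}{\mathsf{p}_\q \log \ell},\]
which is precisely the desired upper bound on $\theta_k$.

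The only delicate point in this argument is the uniform control $\sup_{p \ge (1-\delta)\ell} g(p) \to 0$; this is however automatic, since the pointwise convergence $g(p) \to 0$ as $p \to \infty$ is equivalent to $\sup_{p \ge N} g(p) \to 0$ as $N \to \infty$. No real obstacle is expected beyond setting up carefully the Markov decomposition at time $\theta_1$ and keeping track of the three small distortions coming respectively from $\theta_1$, from the ratio $P_{\theta_1}/\ell$, and from the ratio $\log P_{\theta_1}/\log \ell$.
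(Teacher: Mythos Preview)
Your proof is correct and follows essentially the same approach as the paper's. The paper applies the strong Markov property successively at the stopping times $\theta_1,\dots,\theta_{k-1}$ and uses Lemma~\ref{lem:time_complete_first_layer} at each step to control both $\Delta\theta_i$ and the ratio $P_{\theta_{i+1}}/P_{\theta_i}$ simultaneously, whereas you package the same iterated argument as an induction on $k$, invoking the Markov property once at $\theta_1$ and appealing to the induction hypothesis for the remaining $k-1$ layers; unfolding your induction recovers exactly the paper's proof.
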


\begin{proof}
Let us set $\varepsilon \theta_i = \theta_{i+1}-\theta_i$ for every $i \ge 0$. By the strong Markov property of the peeling exploration applied successively at the stopping times $\theta_{k-1}, \theta_{k-2}, \dots, \theta_1$, we deduce from the last lemma that for any $\varepsilon>0$ we have 
\[\lim_{\ell \to \infty} \P_\infty^{(\ell)}\left(\bigcap_{i=0}^{k-1} \left\{\left\{\varepsilon \theta_i \le \frac{(1+\varepsilon) 2P_{\theta_{i}}}{\mathsf{p}_\q \log P_{ \theta_{i}}}\right\} \cap \left\{\frac{P_{\theta_{i+1}}}{P_{\theta_i}} \in[1- \varepsilon, 1+\varepsilon]\right\}\right\}\right) =1.\]
On the event considered in the last display we have $\theta_{k} \leq (1 + C_{\varepsilon}) k \frac{2\ell}{ \mathsf{p}_{ \mathbf{q}} \log \ell }$ for some constant $C_{ \varepsilon}>0$ tending to $0$ as $ \varepsilon \to 0$. This proves the corollary.
\end{proof}

We defer the proof of Lemma \ref{lem:time_complete_first_layer} and first complete the proof of Proposition \ref{prop:lower_bound_height_dual}. The idea goes as follows: fix some $C > 0$ large and set for every $k \ge 1$ a time $t_k \coloneqq \lfloor \exp((2 C k)^{1/2})\rfloor$ so 
\begin{equation}\label{eq:t_k}
t_{\lfloor(\log n)^2/(2C)\rfloor} = n (1+o(1)),
\qquad\text{and}\qquad
\Delta t_k \coloneqq t_{k+1} - t_k = (C+o(1)) \frac{t_k}{\log t_k}.
\end{equation}
Recall from Remark \ref{rem:moment_inverse_Cauchy_positive} that $\E[t/P_t] \to \E[1/(\mathsf{p}_\q \Upsilon^\uparrow_1)] = 2/(\pi^2 \mathsf{p}_\q)$ as $t \to \infty$. Observe next that 
\[\Delta t_k = C\frac{\mathsf{p}_\q t_k}{2 P_{t_k}} \frac{2 P_{t_k}}{\mathsf{p}_\q \log P_{t_k}} \cdot \frac{\log P_{t_k}}{\log t_k} \cdot (1+o(1)).\]
The last factor $\log P_{t_k} / \log t_k$ converges to $1$ in probability from Theorem \ref{thm:convergence_processes_perimeter_volume} (even almost surely from Lemma \ref{lem:as_convergence_perimeter_volume}), whilst, according to Corollary \ref{cor:time_complete_k_layers}, the first factor is roughly bounded below by the time needed for our peeling algorithm to reveal the first $C \mathsf{p}_\q t_k / 2 P_{t_k}$ layers starting from an initial half-perimeter $P_{t_k}$. Therefore, in $\Delta t_k$ amount of time, we discover roughly $H_{t_{k+1}} - H_{t_k} \gtrsim C \mathsf{p}_\q t_k / 2 P_{t_k}$ layers, and so
\begin{equation}\label{eq:behaviour_H_n}
H_n \gtrsim \sum_{k=1}^{(\log n)^2/(2C)} C \frac{\mathsf{p}_\q t_k}{2 P_{t_k}}
\approx \frac{(\log n)^2}{2C} \cdot C \frac{\mathsf{p}_\q}{2} \cdot \Es{(\mathsf{p}_\q \Upsilon^\uparrow_1)^{-1}}
= \frac{(\log n)^2}{2 \pi^2}.
\end{equation}

The next lemma gives a formal statement of the ``$\approx$'' in \eqref{eq:behaviour_H_n}. The argument is similar to the one used in the proof of Proposition \ref{prop:convergence_jump_times_FPP}.

\begin{lem}\label{lem:approximation_Delta_H_t_k}
Fix any $C > 0$ and define $(t_k)_{k \ge 1}$ as in \eqref{eq:t_k}, then
\[\lim_{\delta \downarrow 0} \limsup_{N \to \infty} \Es{\left|\frac{1}{N} \sum_{k=1}^N \frac{t_k}{P_{t_k}} \ind{P_{t_k}/t_k \in[\delta, \delta^{-1}]} - \frac{2}{\pi^2 \mathsf{p}_\q}\right|}
= 0.\]
\end{lem}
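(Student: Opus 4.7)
We adapt the strategy of the proof of Proposition \ref{prop:convergence_jump_times_FPP}. Set $X_k^{(\delta)} := (t_k/P_{t_k}) \ind{P_{t_k}/t_k \in [\delta, \delta^{-1}]}$, which is bounded by $\delta^{-1}$ and vanishes off the event $\{P_{t_k} \in [\delta t_k,\, t_k/\delta]\}$, and let $E_\delta := \Es{(\mathsf{p}_\q \Upsilon^\uparrow_1)^{-1}\ind{\mathsf{p}_\q \Upsilon^\uparrow_1 \in [\delta, \delta^{-1}]}}$. The plan is, for each fixed $\delta \in (0,1)$, to establish the $\mathrm{L}^2$-convergence of $N^{-1}\sum_{k=1}^N X_k^{(\delta)}$ towards $E_\delta$ as $N \to \infty$, and then to let $\delta \downarrow 0$: Lemma \ref{lem:bound_moments_perimeter_FPP} together with Remark \ref{rem:moment_inverse_Cauchy_positive} ensures that $E_\delta \to \Es{(\mathsf{p}_\q \Upsilon^\uparrow_1)^{-1}} = 2/(\pi^2 \mathsf{p}_\q)$. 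The claim then follows from a triangle inequality.

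For the convergence of the means, Theorem \ref{thm:convergence_processes_perimeter_volume} combined with the uniform integrability from Lemma \ref{lem:bound_moments_perimeter_FPP} yields $\Es{X_k^{(\delta)}} \to E_\delta$ as $k \to \infty$, and Ces\`aro gives $N^{-1}\sum_{k=1}^N \Es{X_k^{(\delta)}} \to E_\delta$. It remains to prove that the variance of $N^{-1}\sum_{k=1}^N X_k^{(\delta)}$ tends to $0$ as $N \to \infty$ for each fixed $\delta$. To this end, we split the double sum $\sum_{k,\ell=1}^N \mathrm{Cov}(X_k^{(\delta)}, X_\ell^{(\delta)})$ at a threshold $B_N := \lfloor N^{3/4}\rfloor$. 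Using the trivial bound $|\mathrm{Cov}| \le 2\delta^{-2}$, the contribution from diagonal and \emph{close} pairs (with $|k-\ell| \le B_N$) is of order $\delta^{-2} B_N/N \to 0$.

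For \emph{far} pairs ($\ell \ge k + B_N$) the key observation is that
\[\log(t_\ell/t_k) = \sqrt{2C}\bigl(\sqrt\ell - \sqrt k\bigr) \ge \frac{\sqrt{2C}\,B_N}{2\sqrt N} \cv[N] \infty\]
uniformly, so $t_k/t_\ell \to 0$ and $t_\ell - t_k \sim t_\ell$ in this regime. This is what forces the choice $B_N \gg \sqrt N$; the threshold $B_N = N^{3/4}$ comfortably satisfies both this requirement and $B_N = o(N)$. By the Markov property of $P$ at time $t_k$ and the identity
\[\mathrm{Cov}(X_k^{(\delta)}, X_\ell^{(\delta)}) = \Es{X_k^{(\delta)}\bigl(\Esc{X_\ell^{(\delta)}}{P_{t_k}} - \Es{X_\ell^{(\delta)}}\bigr)},\]
and since $X_k^{(\delta)}$ vanishes outside $\{P_{t_k} \in [\delta t_k,\, t_k/\delta]\}$, it suffices to show that
\[\sup_{p \in [\delta t_k,\, t_k/\delta]} \left|\Esc{X_\ell^{(\delta)}}{P_{t_k}=p} - \Es{X_\ell^{(\delta)}}\right| \cv[N] 0\]
uniformly over far pairs. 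This is the main technical obstacle. We compare both sides to the intermediate quantity $\E_1\bigl[(t_\ell/P_{t_\ell - t_k})\ind{P_{t_\ell - t_k}/t_\ell \in [\delta, \delta^{-1}]}\bigr]$: Corollary \ref{cor:bound_conditional_LLT}, applied with $n = t_\ell - t_k$ (which tends to infinity uniformly in far pairs), $\varepsilon = \delta/2$, and a suitable slowly decaying sequence $\delta_n \to 0$ compatible with $p \le t_k/\delta$ (which is possible precisely because $t_k/t_\ell \to 0$), controls the gap with $\Esc{X_\ell^{(\delta)}}{P_{t_k}=p}$, the error in the end being of order $o(1) \cdot \log(1/\delta)/\sqrt\delta$ after summing $(t_\ell/y)$ against the local limit theorem error over $y \in [\delta t_\ell, t_\ell/\delta]$; the gap with $\Es{X_\ell^{(\delta)}}$ then vanishes as $\ell \to \infty$, using that $P_{t_\ell-t_k}/t_\ell$ still converges in distribution to $\mathsf{p}_\q \Upsilon^\uparrow_1$ (since $t_\ell - t_k \sim t_\ell$ and Theorem \ref{thm:convergence_processes_perimeter_volume} holds) together with the uniform integrability from Lemma \ref{lem:bound_moments_perimeter_FPP}.
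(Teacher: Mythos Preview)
Your proof is correct and follows essentially the same route as the paper: reduce to a second-moment estimate on the centred sum, split pairs $(k,\ell)$ into ``close'' and ``far'', bound close pairs trivially, and handle far pairs via the local-limit-theorem comparison underlying Corollary~\ref{cor:bound_conditional_LLT}. The paper organises the far-pair step by citing the already-proved Lemma~\ref{lem:decorrelation_FPP} and uses a multiplicative threshold $t_\ell \le A_N t_k$ with $\log A_N = o(\sqrt N)$, whereas you use the additive threshold $\ell - k \le B_N = N^{3/4}$ and re-derive the decorrelation directly; these are equivalent since $\log(t_\ell/t_k)\asymp (\ell-k)/\sqrt N$, and your choice $B_N\gg\sqrt N$ is exactly what makes $t_k/t_\ell\to 0$ uniformly. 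One small bonus of your covariance decomposition (exploiting that the \emph{uncentred} $X_k^{(\delta)}$ already vanishes off $\{P_{t_k}/t_k\in[\delta,\delta^{-1}]\}$) is that it yields $\mathrm{L}^2$-convergence for each fixed $\delta$, slightly more than the iterated limit the paper states; note, however, that the invocation of uniform integrability in the last sentence is unnecessary since all variables are bounded by $\delta^{-1}$.
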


\begin{proof}
First observe that, by monotone convergence,
\[\lim_{\delta \downarrow 0} \Es{(\mathsf{p}_\q \Upsilon^\uparrow_1)^{-1} \ind{\mathsf{p}_\q \Upsilon^\uparrow_1 \in[\delta, \delta^{-1}]}}
= \Es{(\mathsf{p}_\q \Upsilon^\uparrow_1)^{-1}}
= \frac{2}{\pi^2 \mathsf{p}_\q},\]
so it is equivalent to prove
\[\lim_{\delta \downarrow 0} \limsup_{N \to \infty} \Es{\left|\frac{1}{N} \sum_{k=1}^N \frac{t_k}{P_{t_k}} \ind{P_{t_k}/t_k \in[\delta, \delta^{-1}]} - \Es{(\mathsf{p}_\q \Upsilon^\uparrow_1)^{-1} \ind{\mathsf{p}_\q \Upsilon^\uparrow_1 \in[\delta, \delta^{-1}]}}\right|}
= 0.\]
Second, from Theorem \ref{thm:convergence_processes_perimeter_volume} and the boundedness of the random variables, for every $\delta > 0$, we have
\[\lim_{k \to \infty} \Es{\frac{t_k}{P_{t_k}} \ind{P_{t_k}/t_k \in[\delta, \delta^{-1}]}} 
= \Es{(\mathsf{p}_\q \Upsilon^\uparrow_1)^{-1} \ind{\mathsf{p}_\q \Upsilon^\uparrow_1 \in[\delta, \delta^{-1}]}}.\]
Fix $\delta > 0$ and recall the notation from \eqref{eq:epsilon_cut_inverse_perimeter}: $X^{(\delta)}_i = \frac{1}{P_i} \ind{P_i/i \in [\delta, \delta^{-1}]} - \E[\frac{1}{P_i} \ind{P_i/i \in [\delta, \delta^{-1}]}]$; it is therefore sufficient to prove
\[\lim_{\delta \downarrow 0} \limsup_{N \to \infty} \Es{\left(\frac{1}{N} \sum_{k=1}^N t_k X^{(\delta)}_{t_k}\right)^2}
= 0.\]
Since $\E[|i X^{(\delta)}_i|^2] \le \delta^{-2}$, we have
\[\lim_{N \to \infty} \frac{1}{N^2} \sum_{k=1}^N \Es{\left(t_k X^{(\delta)}_{t_k}\right)^2}
= 0.\]
Next, let us fix a sequence $(A_N)_{N \ge 1}$ tending to infinity such that $\log A_N = o(N^{1/2})$ as $N \to \infty$. Observe that $t_\ell \le A_N t_k$ is equivalent to $\ell \le k + (\frac{2 k}{C})^{1/2} \log A_N + \frac{(\log A_N)^2}{2C}$. Cauchy--Schwarz inequality yields $|\E[(i X^{(\delta)}_i) (j X^{(\delta)}_j)]| \le \delta^{-2}$ and so
\begin{align*}
&\limsup_{N \to \infty} \frac{1}{N^2} \sum_{1 \le k < \ell \le N} \ind{t_\ell \le A_N t_k} \left|\Es{(t_k X^{(\delta)}_{t_k}) (t_\ell X^{(\delta)}_{t_\ell})}\right|
\\
&\le \limsup_{N \to \infty} \frac{1}{\delta^2} \frac{1}{N^2} \sum_{k = 1}^N \left(\sqrt{\frac{2 k}{C}} \log A_N + \frac{(\log A_N)^2}{2C}\right)
\\
&=0.
\end{align*}
Finally, from Lemma \ref{lem:decorrelation_FPP},
\[\lim_{\delta \downarrow 0} \limsup_{N \to \infty}
\frac{1}{N^2} \sum_{1 \le k < \ell \le N} \ind{t_\ell > A_N t_k} \left|\Es{(t_k X^{(\delta)}_{t_k}) (t_\ell X^{(\delta)}_{t_\ell})}\right|
= 0.\]
This concludes the proof.
\end{proof}

It remains to give a formal statement of the ``$\gtrsim$'' in \eqref{eq:behaviour_H_n}.

\begin{proof}[Proof of Proposition \ref{prop:lower_bound_height_dual}]
Let us fix $C > 0$ and $\delta \in (0, 1)$ and for every $k \ge 1$, set
\[\eta_k \coloneqq \left(H_{t_{k+1}} - H_{t_k} - C \frac{\mathsf{p}_\q}{2} \frac{t_k}{P_{t_k}}\right) \ind{P_{t_k}/t_k \in[\delta, \delta^{-1}]},\]
where $(t_k)_{k \ge 1}$ is as in \eqref{eq:t_k} and depends on $C > 0$. In words, on the event $P_{t_{k}} \in [\delta t_{k}, \delta^{-1} t_{k}]$, the quantity $\eta_{k}$ is the error we make by approximating $\Delta H_{t_{k}}$ by its value predicted by Corollary \ref{cor:time_complete_k_layers} given $P_{t_{k}}$. We first claim that, $C, \delta > 0$ being fixed, we have $\P(\eta_k \ge -1) \to 1$ as $k \to \infty$. Indeed, let us choose $0 < \varepsilon < 2\delta/(C\mathsf{p}_\q - 2\delta)$ and observe that the following inclusion of events holds:
\begin{equation}\label{eq:simplify_bound_eta}
\left\{\frac{P_{t_k}}{t_k} \notin[\delta, \delta^{-1}]\right\}
\cup
\left\{\left\{\frac{P_{t_k}}{t_k} \in[\delta, \delta^{-1}]\right\} \cap \left\{H_{t_{k+1}} - H_{t_k} \ge \frac{C \mathsf{p}_\q t_k}{(1+\varepsilon) 2P_{t_k}}\right\}\right\}
\subset \{\eta_k \ge -1\}.
\end{equation}
Let us set $N \coloneqq \lfloor (C \mathsf{p}_\q t_k)/((1+\varepsilon) 2P_{t_k}) \rfloor$ to simplify the notation. We take $\alpha = \varepsilon/2$, then on the event $\{P_{t_k}/t_k \in[\delta, \delta^{-1}]\}$, we have for all $k$ large enough:
\[\frac{\Delta t_k}{N} 
\ge \frac{(1+\varepsilon) 2P_{t_k}}{C \mathsf{p}_\q t_k} \cdot (C+o(1)) \frac{t_k}{\log t_k}
\ge \frac{2P_{t_k}}{\mathsf{p}_\q \log P_{t_k}} \cdot (1+\alpha).\]
For $k \ge 1$, we define $\theta_{t_k, 0} \coloneqq t_k$ and for every $i \ge 1$, we let $\theta_{t_k, i}$ be the first time $n$ when all the edges on the boundary after $n$ peeling steps are at height at least $H_{t_k}+i$. Appealing to the previous bound for the first inequality, and applying Corollary \ref{cor:time_complete_k_layers} conditionally on $P_{t_k}$ for the second one, for every $\gamma \in (0,1)$, for all $k$ large enough, we have
\begin{align*}
\Pr{\left\{\frac{P_{t_k}}{t_k} \in[\delta, \delta^{-1}]\right\} \cap \left\{H_{t_{k+1}} - H_{t_k} \ge N\right\}}
&= \P\bigg(\left\{\frac{P_{t_k}}{t_k} \in[\delta, \delta^{-1}]\right\} \cap \left\{\theta_{t_k, N} \le \Delta t_k\right\}\bigg).
\\
&\ge \P\bigg(\left\{\frac{P_{t_k}}{t_k} \in[\delta, \delta^{-1}]\right\} \cap \left\{\frac{\theta_{t_k, N}}{N} \le \frac{(1+\alpha) 2P_{t_k}}{\mathsf{p}_\q \log P_{t_k}}\right\}\bigg)
\\
&\ge (1-\gamma) \cdot \Pr{\frac{P_{t_k}}{t_k} \in[\delta, \delta^{-1}]}.
\end{align*}
Note that there is a slight subtlety since at time $t_k$, we may not have all the edges on the boundary at the same height, say, $r \ge 0$: we may have some edges at height $r$ and the others at height $r+1$. However this only decreases the amount of time needed to complete the first layer so the bound still holds. The last bound in the previous display converges then to $\P(\mathsf{p}_\q \Upsilon^\uparrow_1 \in[\delta, \delta^{-1}])$ as $\gamma \downarrow 0$ and $k \to \infty$. Recall the inclusion of events \eqref{eq:simplify_bound_eta}, we conclude that
\[\liminf_{k \to \infty} \Pr{\eta_k \ge -1} \ge \Pr{\mathsf{p}_\q \Upsilon^\uparrow_1 \notin[\delta, \delta^{-1}]} + \Pr{\mathsf{p}_\q \Upsilon^\uparrow_1 \in[\delta, \delta^{-1}]} = 1.\]

Let us set $\eta_k^- = - \min\{\eta_k, 0\}$, then for every $C, \delta > 0$, since $H_n$ is increasing,
\begin{align*}
H_n &\ge \sum_{k=1}^{(\log n)^2/(2C)-1} \left(H_{t_{k+1}} - H_{t_k}\right) \ind{P_{t_k}/t_k \in[\delta, \delta^{-1}]}
\\
&\ge \sum_{k=1}^{(\log n)^2/(2C)-1} C \frac{\mathsf{p}_\q}{2} \frac{t_k}{P_{t_k}} \ind{P_{t_k}/t_k \in[\delta, \delta^{-1}]} - \sum_{k=1}^{(\log n)^2/(2C)-1} \eta_k^-.
\end{align*}
Let us prove that the last bound, rescaled by $(\log n)^{-2}$, converges to $(2 \pi^2)^{-1}$ in $\mathrm{L}^1$ when letting first $n \to \infty$ and then $C \to \infty$ and $\delta \to 0$. On the one hand, from Lemma \ref{lem:approximation_Delta_H_t_k}, for any $C > 0$,
\[\lim_{\delta \downarrow 0} \limsup_{n \to \infty}
\Es{\left|\frac{1}{(\log n)^2} \sum_{k=1}^{(\log n)^2/(2C)} \left(C \frac{\mathsf{p}_\q}{2} \frac{t_k}{P_{t_k}} \ind{P_{t_k}/t_k \in[\delta, \delta^{-1}]}\right) - \frac{1}{2 \pi^2}\right|}
= 0.\]
On the other hand, since for all $i \ge 1$, we have $\eta_i^- \le (C \mathsf{p}_\q)/(2\delta)$, we obtain
\[\Es{\eta_i^-} = \Es{\eta_i^- \ind{\eta_i^- \le 1}} + \Es{\eta_i^- \ind{\eta_i^- > 1}}
\le 1 + \frac{C \mathsf{p}_\q}{2\delta} \cdot \Pr{\eta_i^- > 1},\]
which converges to $1$ as $i \to \infty$ from the previous discussion. In particular, for any $\delta > 0$,
\[\lim_{C \to \infty} \limsup_{n \to \infty}
\frac{1}{(\log n)^2} \sum_{k=1}^{(\log n)^2/(2C)} \Es{\eta_k^-} = 0,\]
which completes the proof.
\end{proof}

We close this section with the proof of Lemma \ref{lem:time_complete_first_layer}.

\begin{proof}[Proof of Lemma \ref{lem:time_complete_first_layer}]
First note that, according to Theorem \ref{thm:convergence_processes_perimeter_volume}, for every sequence $k(\ell) = o(\ell)$, for every $\varepsilon \in (0,1)$,
\[\lim_{\ell \to \infty} \P_\infty^{(\ell)}\left(1-\varepsilon \le \inf_{i \le k(\ell)} \frac{P_i}{\ell} \le \sup_{i \le k(\ell)} \frac{P_i}{\ell} \le 1+\varepsilon\right) = 1.\]
It follows that for every $\varepsilon \in (0,1)$,
\[\lim_{\ell \to \infty} \P_\infty^{(\ell)}\left(\frac{P_{\theta_1}}{\ell} \in [1-\varepsilon, 1+\varepsilon] \;\middle|\; \theta_1 \le (1+\varepsilon) \frac{2 \ell}{\mathsf{p}_\q \log \ell}\right) = 1.\]
Let us prove that for every $\varepsilon \in (0,1)$,
\begin{equation}\label{eq:upper_bound_time_complete_layer}
\lim_{\ell \to \infty} \P_\infty^{(\ell)}\left(\theta_1 \le (1+\varepsilon) \frac{2 \ell}{\mathsf{p}_\q \log \ell}\right) = 1.
\end{equation}
The idea is similar to that of \cite[Lemma 12]{Curien-Le_Gall:Scaling_limits_for_the_peeling_process_on_random_maps}. We perform the peeling by layers, starting with a perimeter $2\ell$. Let $\mathcal{B}$ be the set of edges adjacent to the unique hole at time $0$ of the peeling exploration. For every $n \ge 1$, we let $A_n$ be the number of edges of $\mathcal{B}$ which are not adjacent to the hole at time $n$ of the peeling exploration, i.e. edges of $\mathcal{B}$ which have been ``swallowed'' by the process. With the notation from Section \ref{sec:upper_bound_height}, we have $A_n = 2P_0-D_n$. For every $\ell \ge 1$, we let $\sigma_\ell \coloneqq \inf\{n \ge 1 : A_n \ge 2\ell\}$; then observe that $\theta_1$ and $\sigma_\ell$ have the same law under $\P_\infty^{(\ell)}$.

Let us define another non-decreasing sequence $(A_n')_{n \ge 0}$: set $A_0' = 0$ and the steps $\Delta A_n' \coloneqq A_{n+1}' - A_n'$ for every $n \ge 0$ are independent and defined as follows according to the $n+1$-st peeling step:
\begin{itemize}
\item we set $\Delta A_n' = 1$ if we discover a new face, or if we identify the selected edge with another one to its left;
\item we set $\Delta A_n' = 2k+2$ if we identify the selected edge with another one to its right, and such that the finite hole contains exactly $2k \ge 0$ edges.
\end{itemize}
Clearly $A_n \ge A_n'$ for all $1 \le n \le \sigma_\ell - 1$: the only difference between the two is when a peeling step identifies an edge with another one to its left, then in $A_n'$ we do not count all the edges of $\mathcal{B}$ between them. For every $\ell \ge 1$, we let $\sigma_\ell' \coloneqq \inf\{n \ge 1 : A_n' \ge 2\ell\}$ so $\sigma_\ell \le \sigma_\ell'$.

Recall the law $\P^{(\infty)}$ of a bipartite Boltzmann map with a root-face of infinite degree introduced in Section \ref{sec:half_plane_map}. Under $\P^{(\infty)}$, $\mathcal{B}$ is infinite and the variables $(\Delta A_n')_{n \ge 1}$ are i.i.d., distributed in $\{1\} \cup 2\N$ where, for every $j \ge 1$,
\[\P^{(\infty)}(A_1' = 2j) \enskip=\enskip \frac{\nu(-j)}{2}
\enskip\mathop{\sim}^{}_{j \to \infty}\enskip \frac{\mathsf{p}_\q}{2j^2}.\]
Since $A_n$ is non-decreasing, $(n \log n)^{-1} A_n'$ converges in probability to $\mathsf{p}_\q$ as $n \to \infty$. We conclude that $\ell (\log \ell)^{-1} \sigma_\ell'$ converges in probability to $2/\mathsf{p}_\q$ as $\ell \to \infty$ and so for every $\varepsilon > 0$,
\begin{equation}\label{eq:upper_bound_time_complete_layer_half_plane}
\P^{(\infty)}\left(\sigma_\ell \le (1+\varepsilon) \frac{2 \ell}{\mathsf{p}_\q \log \ell}\right) \cv[\ell] 1.
\end{equation}

Recall that under $\P_\infty^{(\ell)}$, $\theta_1$ has the law of $\sigma_\ell$, \eqref{eq:upper_bound_time_complete_layer} will therefore follow from \eqref{eq:upper_bound_time_complete_layer_half_plane} by comparing the first peeling steps under the laws $\P_\infty^{(\ell)}$ and $\P^{(\infty)}$. Recall from Equations \eqref{eq:qp} and \eqref{eq:qp2} the transition probabilities for a peeling step when the  perimeter of the hole is $2\ell$: $\nu(k-1) h^\uparrow(\ell+k-1)/h^{\uparrow}(\ell)$ to discover a new vertex of degree $2k$, and $\frac{1}{2}\nu(-k-1) h^\uparrow(\ell-k-1)/h^{\uparrow}(\ell)$ to identify to the left, and the same to the right, two edges and ``swallowing'' $2k$ edges. The quantities $\nu(k-1)$ and $\nu(-k-1)/2$ are respectively the probabilities of the corresponding events when the perimeter is infinite (i.e. under $\P^{(\infty)}$, see Section \ref{sec:half_plane_map}); furthermore $\ell$ is the half-perimeter before the peeling step and $\ell + k -1$, respectively $\ell - k -1$, is the half-perimeter after the peeling step. We deduce that for every $\ell, k \ge 1$ and for every sequence of peeling steps $\mathsf{PS}_1, \dots, \mathsf{PS}_k$ which occurs with a positive probability under $\P_\infty^{(\ell)}$, if we denote by $\ell_k$ the half-perimeter of the hole after these $k$ peeling steps, it holds that
\[\frac{\P_\infty^{(\ell)}(\text{the first }k\text{ peeling steps are }\mathsf{PS}_1, \dots, \mathsf{PS}_k)}
{\P^{(\infty)}(\text{the first }k\text{ peeling steps are }\mathsf{PS}_1, \dots, \mathsf{PS}_k)}
= \frac{h^\uparrow(\ell_k)}{h^\uparrow(\ell)}.\]

Choose any sequence of integers $(k(\ell))_{\ell \ge 1}$ satisfying $\ell / \log \ell \ll k(\ell) \ll \ell$. As we observed at the beginning of this proof, for every $\delta > 0$,
\[\lim_{k \to \infty} \P_\infty^{(\ell)}\left((1-\delta) \ell \le \inf_{j \le k(\ell)} P_j \le \sup_{j \le k(\ell)} P_j \le (1+\delta) \ell\right) = 1.\]
Recall that $h^\uparrow$ is increasing and that $h^\uparrow(x) \sim 2\sqrt{x/\pi}$ as $x \to \infty$. For every $\delta > 0$ we may choose $\ell$ large enough, such that for every sequence of peeling steps $\mathsf{PS}_1, \dots, \mathsf{PS}_{k(\ell)}$ which occurs with a positive probability under $\P_\infty^{(\ell)}$ and satisfies the above constraint, we have
\[\sqrt{1-2\delta} 
\le \frac{\P_\infty^{(\ell)}(\text{the first } k(\ell) \text{ peeling steps are }\mathsf{PS}_1, \dots, \mathsf{PS}_{k(\ell)})}
{\P^{(\infty)}(\text{the first } k(\ell) \text{ peeling steps are }\mathsf{PS}_1, \dots, \mathsf{PS}_{k(\ell)})}
\le \sqrt{1+2\delta}.\]
The event $\sigma_\ell \le (1+\varepsilon) \frac{2 \ell}{\mathsf{p}_\q \log \ell}$ is measurable with respect to the first $(1+\varepsilon) \frac{2 \ell}{\mathsf{p}_\q \log \ell} \ll k(\ell)$ peeling steps, \eqref{eq:upper_bound_time_complete_layer} thus follows from \eqref{eq:upper_bound_time_complete_layer_half_plane} and this comparison result.
\end{proof}

\section{Bernoulli percolation}
\label{sec:perco}

In this section, we consider face percolation on infinite maps: given $p \in [0,1]$, every face is ``open'' with probability $p$ and ``closed'' with probability $1-p$, independently of each other. Two open faces are in the same cluster if there exists a path in the dual map going from one to the other visiting only open faces. We shall mostly focus on the map $\Map^{(\infty)}$ with a root-face of infinite degree introduced in Section \ref{sec:half_plane_map}, and only briefly discuss the case of $\Map_\infty$, since the former is slightly simpler to study and exhibits a more interesting behaviour.

We first prove that the boundary of $\Map^{(\infty)}$ (which is the set of edges adjacent to the infinite face) has infinitely many cut-edges. This implies that there is no percolation. In Section \ref{sec:RW_Cauchy}, we obtain new estimates on entrance times in $\Z_-$ of random walks attracted to a Cauchy process. We finally use these results in Section \ref{sec:perco_length_interface} to study more precisely the length of percolation interfaces using another well-designed peeling algorithm.

\subsection{\texorpdfstring{Cut-edges in $\Map^{(\infty)}$ and $\Map_\infty$}%
	{Cut-edges in M\^{}(infinity) and M\_infinity}}
\label{sec:cut_points}

Let us first prove the following striking property of the map $\Map^{(\infty)}$, illustrated in Figure \ref{fig:cut_edges_half_plane}. We call \emph{cut-edges} the edges on the boundary of $ \Map^{(\infty)}$ which separate the root-vertex from $\infty$ (recall that $ \Map^{(\infty)}$ is one-ended).
\begin{thm}\label{thm:cut_edges_half_plane}
Almost surely, in $\Map^{(\infty)}$ there are infinitely many cut-edges.
\end{thm}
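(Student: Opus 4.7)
The plan is to perform a peeling exploration of $\Map^{(\infty)}$ and exhibit, at each step, a uniformly-positive-probability event that creates a new cut-edge separating the root-vertex from $\infty$; iteration via the Markov property of peeling will then yield infinitely many cut-edges almost surely.

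Recall from Section \ref{sec:half_plane_map} that the peeling of $\Map^{(\infty)}$ is, at each step, an identify event with finite hole of perimeter $2k$ (for some $k\ge 0$) with probability $\tfrac12 \nu(-k-1)$ on each side, or the discovery of a new face. The degenerate event $k=0$ on the left, i.e.\ the identification of the peeled edge with its immediate left neighbour on the boundary, occurs with probability $\tfrac12\nu(-1) > 0$ at every step. I would use the peeling algorithm that always peels the edge immediately to the right of the root-vertex. For this algorithm, the key topological observation is the following: if the peeling step is a ``$k=0$ identify-left'' event, then the peel edge $e_0$ (between the root-vertex and the next boundary vertex) is identified with the edge $e_{-1}$ (between the root-vertex and the previous boundary vertex); the two edges share the root-vertex as common endpoint, and after identification the other endpoints are glued together and the resulting single edge has the infinite face of $\Map^{(\infty)}$ on \emph{both} of its sides. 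Hence this edge is a bridge in the full map, and it separates the root-vertex (which now has degree $1$) from the rest of $\Map^{(\infty)}$, including $\infty$; by definition this is a cut-edge.

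After such an event, the unexplored region is, by the Markov property of the peeling, again a copy of $\Map^{(\infty)}$, now naturally rooted at the other endpoint $v$ of the newly created bridge. Crucially, every path from the original root-vertex to $\infty$ must cross this bridge and therefore enter the new unexplored map, so any subsequent cut-edge in it (separating $v$ from $\infty$) will \emph{also} separate the original root-vertex from $\infty$. I would then iterate: at each peeling step, conditional on the past and regardless of the current state of the exploration, the next step has probability at least $\tfrac12\nu(-1)>0$ of producing a new cut-edge. By the conditional Borel--Cantelli lemma, infinitely many such cut-edge-creating steps occur almost surely, yielding infinitely many cut-edges.

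The main obstacle is the careful combinatorial and topological verification that the ``$k=0$ identify-left'' event really produces a bridge of the full planar map with both sides equal to the infinite face (this is a statement about the bipartite map structure that one must check by tracing the boundary of the infinite face before and after the identification). A secondary subtlety is the ``re-rooting'' after each cut-edge: since the old root-vertex becomes a leaf, subsequent peeling must be performed from the endpoint of the bridge that lies on the $\infty$-side, and one must check that the distribution of the map seen from that vertex is indeed $\P^{(\infty)}$, which follows from the Markov property but must be carefully stated.
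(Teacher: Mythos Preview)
Your argument contains a genuine gap: the claim that a ``$k=0$ identify-left'' event \emph{always} produces a cut-edge separating the root-vertex from $\infty$ is false once a face has been discovered at the root-vertex.

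Concretely, suppose the first peeling step reveals a new face $F$ of degree $2m \ge 4$ adjacent to the peel edge $e_0$. After this step the root-vertex $0$ is incident to three edges: $e_{-1}$ (on the boundary of the infinite face), $e_0$ (now lying between the infinite face and $F$), and the first new edge $f_1$ of $F$ (on the hole boundary). At the next step you peel $f_1$; if the ``$k=0$ identify-left'' event occurs, $f_1$ is glued to $e_{-1}$. The resulting edge has the infinite face on one side but $F$ on the other, so it is \emph{not} a bridge with the infinite face on both sides. More importantly, removing it does not disconnect $0$ from $\infty$: the root-vertex is still joined to vertex $1$ via $e_0$, and vertex $1$ remains on the hole boundary and hence is connected to infinity. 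No cut-edge has been created. Incidentally, the root-vertex has also disappeared from the hole boundary, so the rule ``peel the edge immediately to the right of the root-vertex'' becomes ill-defined.

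What your argument does establish is that the very \emph{first} peeling step creates a cut-edge with probability at least $\tfrac12\nu(-1)>0$. Combined with the spatial Markov property beyond a cut-edge, this only yields $\P(\mathscr{C}\ge 1)\ge \tfrac12\nu(-1)$, which says nothing about $\{\mathscr{C}=\infty\}$; one would need $\P(\mathscr{C}\ge 1)=1$, and your peeling does not give this.

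The paper's proof sidesteps these dynamical issues. It first observes, via the spatial Markov property, that if $\mathscr{C}<\infty$ then $\mathscr{C}$ must be geometrically distributed, so it suffices to show $\E[\mathscr{C}]=\infty$. This expectation is then computed directly: identifying the boundary with $\Z$, the probability that the boundary edge $\{-i-1,-i\}$ is glued in $\Map^{(\infty)}$ to the edge $\{j,j+1\}$ (for $i,j\ge 0$ with $i+j$ even) equals $\tfrac12\nu(-\tfrac{i+j}{2}-1)$, and the double sum of these probabilities diverges thanks to the Cauchy tail $\nu(-k)\sim \mathsf{p}_\q k^{-2}$.
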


\begin{figure}[!ht]
\begin{center}
\includegraphics[width=.4\linewidth]{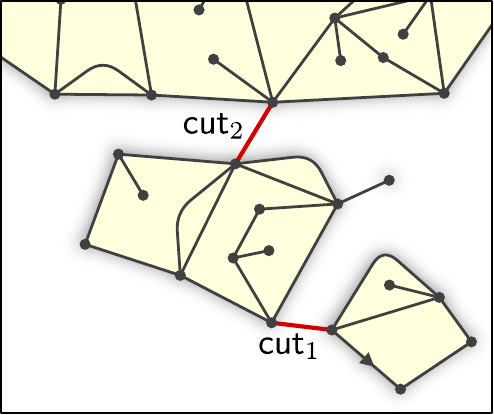}
\caption{Illustration of the decomposition of $\Map^{(\infty)}$.}
\label{fig:cut_edges_half_plane}
\end{center}
\end{figure}

\begin{proof}[Proof of Theorem \ref{thm:cut_edges_half_plane}]
Denote $ \mathsf{cut}_{1}, \mathsf{cut}_{2}, \dots$ the cut-edges ordered as seen from the root-vertex. By the spatial Markov property, conditionally on $ \mathsf{cut}_{1}$ existing, the two maps obtained by removing $ \mathsf{cut}_{1}$ from $\Map^{(\infty)}$ are independent and the infinite one (rooted say at the first edge on the right of $ \mathsf{cut}_{1}$ on its boundary) has the law of $\Map^{(\infty)}$. This clearly implies that the number $\mathscr{C}$ of  cut-edges in $\Map^{(\infty)}$, if finite, has a geometric distribution. Thus, to prove that $ \mathscr{C}=\infty$ almost surely, we only need to prove $ \mathbb{E}[\mathscr{C}] = \infty$. For this, we consider the boundary of $\Map^{(\infty)}$ seen as unexplored, which we identify with $\Z$. By the peeling exploration (recall the transition probabilities from Section \ref{sec:half_plane_map}), for $i \geq 0$ and $ j\geq 0$ such that $i+j$ is even, the probability that the edge $\{(-i-1)\to-i\}$ is identified in $\Map^{(\infty)}$ to the edge $\{j \to (j+1)\}$ is equal to 
\[\frac{1}{2}\nu\left( -\frac{i+j}{2}-1\right).\]
Hence we have  as desired
\[\mathbb{E}[ \mathscr{C}] = \sum_{\begin{subarray}{c}i\ge 0, j \geq 0\\ i+j \mathrm{\ even} \end{subarray} } \frac{1}{2}\nu\left( -\frac{i+j}{2}-1\right) =\infty,\]
thanks to \eqref{eq:tail_nu}.
\end{proof}

\begin{rem}[Cut-edges in $ \Map_{\infty}$]
\label{rem:cut_edges_full_plane}
The same phenomenon should appear in $\Map_{\infty}$ (although the cut-edges are more rare) and we sketch here the argument which is similar to that of \cite[Section 5.2.1]{Budd-Curien:Geometry_of_infinite_planar_maps_with_high_degrees}, see in particular Figure 10 there. Consider a peeling of $ \Map_{\infty}$ and assume that between time $2^{k}$ and $2^{k+1}$ we discover a new face of degree proportional to $2^{k}$. By Theorem \ref{thm:convergence_processes_perimeter_volume}, this event happens with a probability bounded away from $0$ (uniformly in $k$). When, this happens, we claim that there is a probability of order $ \frac{1}{k}$ that two edges of this face are identified and form a cut-edge of $\Map_{\infty}$. This can for example occur if when performing a peeling by layers along this face we discover an unusually large jump of size $2^{k}$ within the first $ 2^{k}/\log 2^{k}$ steps and that this jump creates a cut-edge (see Figure 10 in \cite{Budd-Curien:Geometry_of_infinite_planar_maps_with_high_degrees}). The last event indeed occurs with probability of order $1/k$. Summarising, this heuristic shows that with a probability of order $1/k$ a cut-edge occurs between the peeling times $2^{k}$ and $2^{k+1}$. Since the harmonic series diverges and these events are roughly independent, this should imply existence of infinitely many cut-edges in $\Map_{\infty}$.
\end{rem}

\subsection{Entrance times of random walks attracted to a Cauchy process}
\label{sec:RW_Cauchy}

Our aim in this sub-section is to prove Proposition \ref{prop:intro_tail_first_ladder_time_Cauchy_walk} on the tail behaviour of entrance time for random walks attracted to a symmetric or asymmetric Cauchy process. We shall use then these results to study face percolation on infinite Boltzmann random maps. Let us first prove an easy concentration estimate for non-decreasing random walks in the domain of attraction of a $1$-stable law.

\begin{lem}\label{lem:deviation_increasing_Cauchy_walk}
Let $\xi$ be a random variable on $\Z_+$ satisfying $\P(\xi > k) \sim c k^{-1}$ as $k \to \infty$ for some $c > 0$. Let $W$ be a random walk started from $0$ and with i.i.d. steps distributed as $\xi$. Then for every $\delta > 0$, as $n \to \infty$,
\[\Pr{\left|(n \log n)^{-1} W_n - c\right| \ge \delta} = O\left(\frac{1}{\log n}\right).\]
\end{lem}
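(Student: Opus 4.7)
The plan is a standard truncation and Chebyshev argument. Since $\xi$ has infinite first moment, the walk $W_n$ is not concentrated around $nc\log n$ on the scale $\sqrt{n}$, but truncating each step at a suitable level $M_n$ restores a finite second moment. The natural choice is $M_n = n\log n$, since this is precisely the scale at which a single step has a chance of order $1/\log n$ of contributing a macroscopic fraction of the whole sum.

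First I would compute the moments of the truncated step $\xi \wedge M_n$. Using $\E[\xi \wedge M_n] = \sum_{k=0}^{M_n-1}\P(\xi > k)$ together with the hypothesis $\P(\xi>k)\sim c/k$, one gets by standard comparison with the harmonic sum
\[ \Es{\xi \wedge M_n} = c\log M_n + o(\log M_n) = c\log n + o(\log n). \]
For the variance, $\Es{(\xi\wedge M_n)^2} = \int_0^{M_n} 2y\,\P(\xi > y)\,\d y \sim 2cM_n$, so $\Var(\xi \wedge M_n) = O(M_n) = O(n\log n)$.

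Next, set $\tilde W_n = \sum_{i=1}^n (\xi_i \wedge M_n)$. The computations above yield $\E[\tilde W_n] = cn\log n + o(n\log n)$ and $\Var(\tilde W_n) = O(n^2 \log n)$. By Chebyshev's inequality, for every fixed $\delta > 0$ and $n$ large enough,
\[ \Pr{\left| \tilde W_n - cn\log n \right| \geq \tfrac{\delta}{2}\, n\log n} \;\leq\; \frac{O(n^2\log n)}{(\delta n\log n/2)^2} \;=\; O\!\left(\frac{1}{\log n}\right). \]
On the event $E_n := \{\max_{i\leq n} \xi_i \leq M_n\}$ we have $W_n = \tilde W_n$, and a union bound together with the tail hypothesis gives
\[ \Pr{E_n^c} \leq n\,\Pr{\xi > M_n} = n \cdot \frac{c}{n\log n}(1+o(1)) = O\!\left(\frac{1}{\log n}\right). \]
Combining the two bounds yields the claim.

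I don't expect any serious obstacle: the argument is self-contained once the truncation level is chosen correctly. The only mild subtlety is the matching of scales, namely that the truncation level $M_n = n\log n$ simultaneously makes the variance contribution and the failure probability of the truncation event both of order $1/\log n$; any significantly smaller truncation would let the tail contribution dominate, and a significantly larger one would blow up the variance term.
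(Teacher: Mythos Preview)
Your proposal is correct and essentially identical to the paper's proof: both truncate each step at level $n\log n$, bound the probability that some step exceeds this level by $O(1/\log n)$ via a union bound, and then apply Chebyshev to the truncated sum using $\Var(\xi\text{ truncated}) = O(n\log n)$. The only cosmetic difference is that you cap with $\xi\wedge M_n$ while the paper zeroes out via $\xi\,\mathbb{I}_{\{\xi\le M_n\}}$, which is immaterial since the two coincide on the event $E_n$.
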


It is well-known that such a random walk satisfies $(n \log n)^{-1} W_n \to c$ in probability as $n \to \infty$, and we have used this fact in the proof of Lemma \ref{lem:time_complete_first_layer}, but we shall need these more precise deviation probabilities.

\begin{proof}
Fix $\delta > 0$ small. Let us write $W_n = \xi_1 + \dots + \xi_n$, where the $\xi_i$'s are i.i.d., distributed as $\xi$. We set also $\xi^{(n)} = \xi \cdot \ind{\xi \le n \log n}$, and then $W_n^{(n)} = \xi^{(n)}_1 + \dots + \xi^{(n)}_n$. First, we have the easy upper bound
\[\Pr{\left|(n \log n)^{-1} W_n - c\right| \ge \delta}
\le \Pr{\sup_{1\le i\le n} \xi_i > n \log n} + \Pr{\left|(n \log n)^{-1} W^{(n)}_n - c\right| \ge \delta}.\]
The first term in the right-hand side is asymptotically equivalent to $c/\log n$ as $n \to \infty$ so we focus on the second one. Let us consider the first two moments of $\xi^{(n)}$ as $n \to \infty$:
\[\Es{\xi^{(n)}} \sim c \log n,
\qquad\text{and}\qquad
\Es{(\xi^{(n)})^2} \sim c n \log n,
\qquad\text{so}\qquad
\Var{(\xi^{(n)})^2} \sim c n \log n.\]
Then for all $n$ large enough, we have $|(n \log n)^{-1} \E[W^{(n)}_n] - c| \le \delta/2$ and so
\begin{align*}
\Pr{\left|(n \log n)^{-1} W^{(n)}_n - c\right| \ge \delta}
&\le \Pr{(n \log n)^{-1} \left|W^{(n)}_n - \Es{W^{(n)}_n}\right| \ge \delta/2}
\\
&\le \frac{4}{\delta^2 (n \log n)^2} \cdot \Var{W^{(n)}_n}
\\
&\sim \frac{4}{\delta^2 (n \log n)^2} \cdot cn^2 \log n,
\end{align*}
and the claim follows.
\end{proof}

We next prove Proposition \ref{prop:intro_tail_first_ladder_time_Cauchy_walk} appealing to this lemma.

\begin{proof}[Proof of Proposition \ref{prop:intro_tail_first_ladder_time_Cauchy_walk}]
We start with the symmetric case $c_+=c_-$.
According to \cite[Theorem 8.3.1]{Bingham-Goldie-Teugels:Regular_variation} $W_n/n - b_n$ converges in distribution as $n\to\infty$ to a symmetric Cauchy random variable $\mathcal{C}$ if we choose $b_n = \Es{\frac{W_1}{1+\left(W_1/n\right)^2}}$.
Hence, under the assumption that $b_n = b + o(1)$ as $n\to\infty$ we have
\[\Pr{ W_n > 0} = \Pr{ \frac{1}{n}W_n - b_n > - b_n } = \Pr{\mathcal{C} > -b} + o(1). \] 
Using that the Cauchy random variable $\mathcal{C}$ with tail $\Pr{ \mathcal{C} > x } \sim c_+/x$ has distribution function $\Pr{ \mathcal{C} < x } = \frac{1}{2} + \frac{1}{\pi} \arctan(\frac{x}{\pi c_+})$ we find that $\Pr{W_n>0} = \rho + o(1)$.
In particular, $W_n$ satisfies Spitzer's condition $\frac{1}{n}\sum_{k=1}^n \Pr{W_n>0} \to \rho \in(0,1)$ as $n\to\infty$.
Now \cite[Theorem 8.9.12]{Bingham-Goldie-Teugels:Regular_variation} implies that $\tau$ is in the domain of attraction of a positive stable random variable with index $\rho$, and (iii) follows.

Consider next the second case $c_+ < c_-$. Let $(\xi_i)_{i \ge 1}$ be a sequence of i.i.d. random variables distributed as $W_1$ and let us set
\[W_n^{(+)} \coloneqq \sum_{i=1}^n \xi_i \cdot \ind{\xi_i \ge 0},
\qquad\text{and}\qquad
W_n^{(-)} \coloneqq \sum_{i=1}^n -\xi_i \cdot \ind{\xi_i \le 0},
\qquad\text{so}\qquad
W_n = W_n^{(+)} - W_n^{(-)}.\]
Choose $A \in (c_+, c_-)$, then appealing to Lemma \ref{lem:deviation_increasing_Cauchy_walk}, we obtain
\begin{align*}
\Pr{W_n > 0} &\le \Pr{W_n^{(+)} > A n \log n \text{ or } W_n^{(-)} < A n \log n}
\\
&\le \Pr{W_n^{(+)} > A n \log n} + \Pr{W_n^{(-)} < A n \log n}
\\
&= O\left(\frac{1}{\log n}\right).
\end{align*}
In particular this allows us to bound $\Pr{W_n \geq 0}$ uniformly by the coefficients of an appropriate analytic function.
To be precise, using the transfer theorem from \cite[Theorem 3A]{Flajolet-Odlyzko:Singularity_analysis_of_generating_functions}, there exists a $C>0$ such that
\[ \Pr{W_n \geq 0} \leq [s^n] f(s),
\quad\text{with}\quad f(s) = C \cdot \left(\frac{1}{(1-s)\log\left(\frac{1}{1-s}\right)}-\frac{1}{s}\right).  \]
According to \cite[XII.7 Theorem 4]{Feller:An_introduction_to_probability_theory_and_its_applications_Volume_2} the generating function $p(s)$ of the probabilities 
\[p_n = \Pr{\tau>n} = \Pr{W_1 \geq0, W_2\geq0,\ldots,W_n\geq0}\] 
may be expressed in terms of the probabilities $\Pr{W_m\geq0}$ through
\begin{equation}\label{eq:genfunrelation}
 p(s) = \sum_{n=0}^\infty p_n s^n = \exp\left(\sum_{m=1}^\infty \frac{s^m}{m}\Pr{W_m\geq0}\right).
 \end{equation}
Since the coefficient of $s^n$ of the right-hand side is monotone in $\Pr{W_m\geq 0}$ we have that
\[ p_n \leq [s^n]\exp\left(\sum_{m=1}^\infty \frac{s^m}{m}[s^m]f(s)\right) = [s^n]\exp\left(\int_0^s f(t)\mathrm{d}t\right) = [s^n]\left(\frac{1}{s} \log\left(\frac{1}{1-s}\right) \right)^C. \]
Assuming we have not taken $C$ to be an integer, we may deduce again from the transfer theorem \cite{Flajolet-Odlyzko:Singularity_analysis_of_generating_functions}, Theorem 3A and the second remark at the end of Section 3 that the right-hand side is $O((\log n)^{C-1}/n)$.
Hence, the same is true for $p_n$, giving the correct upper-bound.

For the lower-bound it suffices to consider the situation in which the walk makes a large positive jump at the first step: with the same notation as above and $A > c_-$,
\begin{align*}
\Pr{\tau > n } &\geq \Pr{W_1 > A\, n\log n\text{ and } \tau> n}\\
&\geq \Pr{W_1 > A\, n\log n}\cdot\Pr{W_{n}^{(-)} < A\, n\log n} \\
&\sim \frac{c_+}{A\,n \log n}.
\end{align*}

Consider finally the first case $c_+ > c_-$. Then we may apply the previous results to the walk $-W$ and we have
\[\Pr{W_n \geq 0} = 1 - \Pr{W_n < 0} \ge 1 - O\left(\frac{1}{\log n}\right).\]
Hence there exist $C,N>0$ such that for all $n>N$
\[\Pr{W_n \geq 0} \geq 1- [s^n]f(s) \geq 0.
\]
Similarly as before, by monotonicity we may bound the coefficients of (\ref{eq:genfunrelation}) by
\[
p_n \geq [s^n]\exp\left(\sum_{m=N+1}^\infty \frac{s^m}{m}(1-[s^m]f(s))\right) = [s^n]G(s),
\]
where
\[G(s) = \frac{1}{1-s}\exp\left(- \int_0^s f(t)\mathrm{d}t + P(s)\right) = \frac{1}{1-s} \left(\frac{1}{s} \log\left(\frac{1}{1-s}\right) \right)^{-C} e^{P(s)}\]
with $P(s)= \sum_{m=1}^N \frac{s^m}{m}(1-[s^m]f(s))$ a polynomial of degree $N$.
The transfer theorem \cite[Theorem 3A]{Flajolet-Odlyzko:Singularity_analysis_of_generating_functions} applies to $G(s)$, giving
\[ p_n \geq [s^n]G(s) \sim \ex^{P(1)} (\log n)^{-C}, \]
and thus completing the proof.
\end{proof}

\begin{rem}\label{rem:polylog}
	We expect that under the assumptions of Proposition \ref{prop:intro_tail_first_ladder_time_Cauchy_walk}, the walk $W$ satisfies
	\begin{equation}\label{eq:Wposprob}
	\begin{aligned}
  1-\Pr{W_n > 0} &\sim \frac{c_-}{c_+-c_-} (\log n)^{-1} \qquad\text{when}\qquad c_+>c_-,
  \\
  \Pr{W_n > 0} &\sim \frac{c_+}{c_--c_+} (\log n)^{-1} \qquad\text{when}\qquad c_+<c_-.
\end{aligned}
	\end{equation} 
	When this is the case one may easily adapt the proof of Proposition \ref{prop:intro_tail_first_ladder_time_Cauchy_walk} to obtain
	\[ \Pr{\tau \geq n} = \begin{cases} (\log n)^{-\frac{c_-}{c_+-c_-} + o(1)} & \text{when }c_+>c_- \\
	n^{-1}(\log n)^{\frac{c_+}{c_--c_+}-1 + o(1)} & \text{when }c_+<c_-
	\end{cases}.\]
	It turns out (\ref{eq:Wposprob}) can be established straightforwardly under slightly stronger assumptions on the law of $W$. 
	Indeed, if we assume that $\Pr{ W_1 > k } = c_+ k^{-1} + o(k^{-1}(\log k)^{-2} )$ and $\Pr{ W_1 < -k } = c_- k^{-1} + o(k^{-1}(\log k)^{-2} )$ as $k\to\infty$, then according to Hall \cite[Theorem 3]{Hall:Two_sided_bounds_on_the_rate_of_convergence_to_a_stable_law} the distribution function $x\mapsto\Pr{ a_n^{-1}W_n-b_n < x}$ converges to the distribution $F_\infty(x)$ of an asymmetric Cauchy random variable with a uniform error of size $o(1/\log n)$.
	Using \cite[Theorem 8.3.1]{Bingham-Goldie-Teugels:Regular_variation} we may choose $a_n = n$ and $b_n \sim (c_+-c_-)\log n$, and therefore 
	\[\Pr{W_n>0} = 1- \Pr{ a_n^{-1}W_n - b_n \leq -b_n} = 1-F_{\infty}(-b_n) + o(1/\log n). \]
	Since $F_\infty(-x) \sim c_-/x$ and $1-F_{\infty}(x) \sim c_+/x$ as $x\to\infty$, we indeed obtain (\ref{eq:Wposprob}).
\end{rem}

\subsection{\texorpdfstring{On the length of the interfaces in $\Map^{(\infty)}$}%
	{On the length of the interfaces in M\^{}(infinity)}}
\label{sec:perco_length_interface}

We close this paper by considering face percolation on the map $\Map^{(\infty)}$ in which each face is coloured independently black with probability $p \in (0,1]$. This corresponds to site percolation on the dual maps. In both cases, we shall adopt the following boundary condition: we impose that the infinite face is white, and we ``unzip'' the root-edge to create a face of degree two that we colour black.

Recall the decomposition of $\Map^{(\infty)}$ as a string of disjoint finite maps obtained in Theorem \ref{thm:cut_edges_half_plane}. Since each black cluster is contained in such a finite part, this readily implies that for any $p \in (0,1]$, almost surely there is no infinite black cluster in $\Map^{(\infty)}$. Similarly, the result in Remark \ref{rem:cut_edges_full_plane} would imply that for any $p \in (0,1)$, almost surely there is no infinite black cluster in $\Map_\infty$.

Although there is no percolation in $\Map^{(\infty)}$, even at $p=1$, we next prove that the behaviour of the length of percolation interfaces changes at $p=1/2$. More precisely, consider the black cluster containing the root-edge, which we view as a black face of degree two; we denote by $\mathscr{L}$ the number of edges on the outer-most boundary of the latter, as depicted in Figure \ref{fig:boundary_perco_cluster}.

\begin{prop}\label{prop:number_edges_boundary_perco_half_plane}
As $n \to \infty$, we have
\[\Pr{\mathscr{L} > n} = n^{-\lambda(p) + o(1)}
\qquad\text{where}\qquad
\lambda(p) = 
\begin{cases}
1 &\text{when }p < 1/2,\\
\frac{1}{\pi}\arctan(2\pi \mathsf{p}_\q) &\text{when }p = 1/2,\\
0 &\text{when }p > 1/2.
\end{cases}\]
\end{prop}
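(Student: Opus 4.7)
The plan is to perform a peeling exploration of $\Map^{(\infty)}$ along the frontier of the root black cluster whose dynamics reduces to a random walk with Cauchy-type tails, and to read off the behaviour of $\mathscr{L}$ from Proposition~\ref{prop:intro_tail_first_ladder_time_Cauchy_walk}.

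First, I would run the peeling of $\Map^{(\infty)}$ that always peels the right-most edge of the \emph{black frontier} $\partial^b$ --- the edges of the unexplored hole adjacent from inside to a face of the currently discovered root cluster --- and stop at the first time $\tau$ at which $\partial^b$ is emptied. Starting from $B_{0} \coloneqq |\partial^b| = 2$ (the two edges of the unzipped degree-$2$ black root-face), a direct case analysis based on the transitions recalled in Section~\ref{sec:half_plane_map} together with the independent face-colouring shows that $(B_n)$ can be coupled with a random walk $\tilde B$ on $\Z$ with i.i.d.\ increments $\mu_p$, in such a way that $\tau = \inf\{n \ge 1 : \tilde B_n \le 0\}$. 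Namely, $\mu_p$ puts mass $p\,\nu(k-1)$ on $2k-2$ for $k \ge 1$ (a new black face of degree $2k$), mass $(1-p)\sum_{j\ge 0}\nu(j) + \frac12 \sum_{j\ge 1}\nu(-j)$ on $-1$ (a new white face, or an identification to the right, which only swallows edges of the white frontier), and mass $\nu(-k-1)/2$ on $-2k-2$ for $k \ge 0$ (an identification to the left, possibly closing off the cluster). By \eqref{eq:tail_nu},
\[ \Pr{\mu_p > n} \sim \frac{2p\,\mathsf{p}_\q}{n} \qquad\text{and}\qquad \Pr{\mu_p < -n} \sim \frac{\mathsf{p}_\q}{n} \qquad\text{as}\quad n \to \infty, \]
putting us in the setting of Proposition~\ref{prop:intro_tail_first_ladder_time_Cauchy_walk} with $c_+ = 2p\,\mathsf{p}_\q$ and $c_- = \mathsf{p}_\q$.

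Second, I would relate $\mathscr{L}$ to $\tau$. A case-by-case inspection shows that a peeled edge ends up on the outer-most boundary of the cluster (the boundary of the infinite component of its complement) if and only if the peeling step revealed a white face in the main exploration: edges revealed together with a black face become interior to the cluster, while merged edges and edges swallowed by identifications lie in pockets sealed off from infinity. Consequently $\mathscr{L}$ is exactly the number of white-face peelings, which occur independently across peeling steps with probability $q \coloneqq (1-p)\sum_{j\ge 0}\nu(j) > 0$, so by the law of large numbers $\mathscr{L}/\tau \to q$ in probability; in particular $\log\Pr{\mathscr{L} > n}/\log n$ and $\log\Pr{\tau > n}/\log n$ share the same limit. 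Parts~(i) and (ii) of Proposition~\ref{prop:intro_tail_first_ladder_time_Cauchy_walk} immediately yield the announced values in the asymmetric regimes: $p > 1/2$ gives $c_+ > c_-$ and $\lambda(p) = 0$, while $p < 1/2$ gives $c_+ < c_-$ and $\lambda(p) = 1$.

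In the symmetric case $p = 1/2$, $c_+ = c_- = \mathsf{p}_\q$ and part~(iii) requires us to compute $b \coloneqq \lim_n \Es{\mu_p/(1+(\mu_p/n)^2)}$. The contributions of the atoms at $2k-2$ and $-(2k+2)$ for large $k$ both come from the common asymptotic $\nu(k) \sim \nu(-k) \sim \mathsf{p}_\q/k^2$ and should cancel at leading order, leaving only the contribution of the atom at $-1$ of total mass $1/2$ and yielding $b = -1/2$. Part~(iii) then delivers
\[ \lambda(1/2) = \frac{1}{2} + \frac{1}{\pi}\arctan\!\left(\frac{-1/2}{\pi\,\mathsf{p}_\q}\right) = \frac{1}{2} - \frac{1}{\pi}\arctan\!\left(\frac{1}{2\pi\,\mathsf{p}_\q}\right) = \frac{1}{\pi}\arctan(2\pi\,\mathsf{p}_\q), \]
using $\arctan(x) + \arctan(1/x) = \pi/2$ for $x > 0$. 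The hard part will be the rigorous justification of this cancellation: the asymptotic symmetry in \eqref{eq:tail_nu} is only leading-order, so one must go one order further in the expansions of $q_k$ and $W^{(\ell)}$ and exploit the $\nu$-harmonicity of $h^\uparrow$ on $\N$ forced by the criticality of $\q$ to see that the two $\log n$-divergences in the truncated mean of $\mu_p$ indeed cancel.
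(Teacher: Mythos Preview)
Your overall architecture is right---reduce to a Cauchy-tailed walk and invoke Proposition~\ref{prop:intro_tail_first_ladder_time_Cauchy_walk}---and matches the paper's. But two steps are not sound as written.

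\textbf{Identifying $\mathscr{L}$.} Your claim that ``a peeled edge ends up on the outer-most boundary if and only if the step revealed a white face'' is false: when the right-most black edge is identified with a white edge on its right, the merged edge separates a black face from a white face and \emph{does} lie on the outer boundary, not in a sealed pocket. Moreover, a single (say right-most) exploration does not trace all of~$\mathscr{L}$ exactly; the left part of the interface is only seen indirectly. The paper therefore does not claim an equality. It lets $\mathscr{N}$ count the ``$-1$'' steps (white-face discovery \emph{or} identification with white) in the left-most exploration, defines $\mathscr{N}'$ symmetrically, and uses the geometric sandwich $\max(\mathscr{N},\mathscr{N}')\le \mathscr{L}\le \mathscr{N}+\mathscr{N}'$ together with a Chernoff bound comparing $\mathscr{N}$ to~$\theta$. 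Your ``$\mathscr{L}/\tau\to q$ by the LLN'' is also ill-posed, since $\tau$ is a.s.\ finite; you need the quantitative comparison just described.

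\textbf{Computing $b$ at $p=1/2$.} Your plan to ``go one order further in the expansions of $q_k$ and $W^{(\ell)}$'' cannot work under the paper's hypotheses: only the leading-order asymptotics~\eqref{eq:asymptotic_q} are assumed, so no second-order information is available. The paper's route is cleaner and uses exactly what has already been established. At $p=1/2$ the step law of $W$ satisfies $\Pr{W_1=-1}=\tfrac12$ and $\Pr{W_1=2k}=\tfrac12\,\nu(k)$ for all $k\in\Z$, so
\[
\Es{\frac{W_1}{1+(W_1/n)^2}}=\frac12\cdot\frac{-1}{1+n^{-2}}+\Es{\frac{S_1}{1+(2S_1/n)^2}},
\]
and the second term is $o(1)$ precisely because $S$ is centred in the sense of Proposition~\ref{prop:convergence_walk_Cauchy}. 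Harmonicity of $h^\uparrow$ is indeed the underlying reason, but it enters through that proposition, not through refined tail expansions.
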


The proof relies on the following peeling process $(\overline{\mathfrak{e}}_i)_{i \ge 0}$ which follows the percolation interfaces, defined in \cite{Curien:Peccot} (see also \cite{Angel-Curien:Percolations_on_random_maps_half_plane_models, Richier:Universal_aspects_of_critical_percolation_on_random_half_planar_maps} for related models).  For each $i \ge 0$, each edge on the boundary of the unique hole of $\overline{\mathfrak{e}}_i$ is given the colour of the face incident to it in $\overline{\mathfrak{e}}_i$. We assume that $\overline{\mathfrak{e}}_0$ consists of the white infinite face and the root-edge, which we view as a black face of degree $2$. Consider next the following hypothesis for a given step $i \ge 0$:

\begin{center}
\begin{minipage}{.9\linewidth}
$(H)$: the set of black edges on the boundary of $\overline{\mathfrak{e}}_i$ is connected, possible empty.
\end{minipage}
\end{center}

Fix $p \in (0,1]$. At the initial step $\overline{\mathfrak{e}}_0$, there is only one black edge on the boundary. If $\overline{\mathfrak{e}}_i$ satisfies $(H)$ and contains $\ell \ge 1$ black edges, then the next edge to peel $\mathcal{A}(\overline{\mathfrak{e}}_i)$ is the left-most black edge. We then face several possible situations when peeling this edge:
\begin{enumerate}
\item either we discover a new black face of degree $2k$ for some $k \ge 1$, this occurs with probability $p \cdot \nu(k-1)$;
\item or we discover a new white face, this occurs with probability $(1-p) \cdot \nu([0, \infty))$;
\item or the black edge is identified with an edge on its left, this occurs with probability $\nu((-\infty, -1])/2$;
\item or, if $\ell \ge 2$, it is identified with another black edge on its right, thus creating a hole of perimeter $0 \le 2k \le \ell-2$, this occurs with probability $\nu(-k-1)/2$;
\item or it is identified with a white edge on its right, this occurs with probability $\nu((-\infty, -\frac{\ell-1}{2}-1])/2$.
\end{enumerate}
In all cases, $\overline{\mathfrak{e}}_{i+1}$ satisfies $(H)$. In the first and third cases, or in the second and fourth, if any black edges remain on the boundary, the next edge to peel $\mathcal{A}(\overline{\mathfrak{e}}_{i+1})$ is again the left-most black edge on the boundary. In the last case, or in the second or fourth if there is no black edge remaining on the boundary, then we have fully discovered a black cluster, and the next edge to peel $\mathcal{A}(\overline{\mathfrak{e}}_{i+1})$ is the first white edge immediately to its right.

\begin{figure}[!ht]
\begin{center}
\includegraphics[width=.9\linewidth]{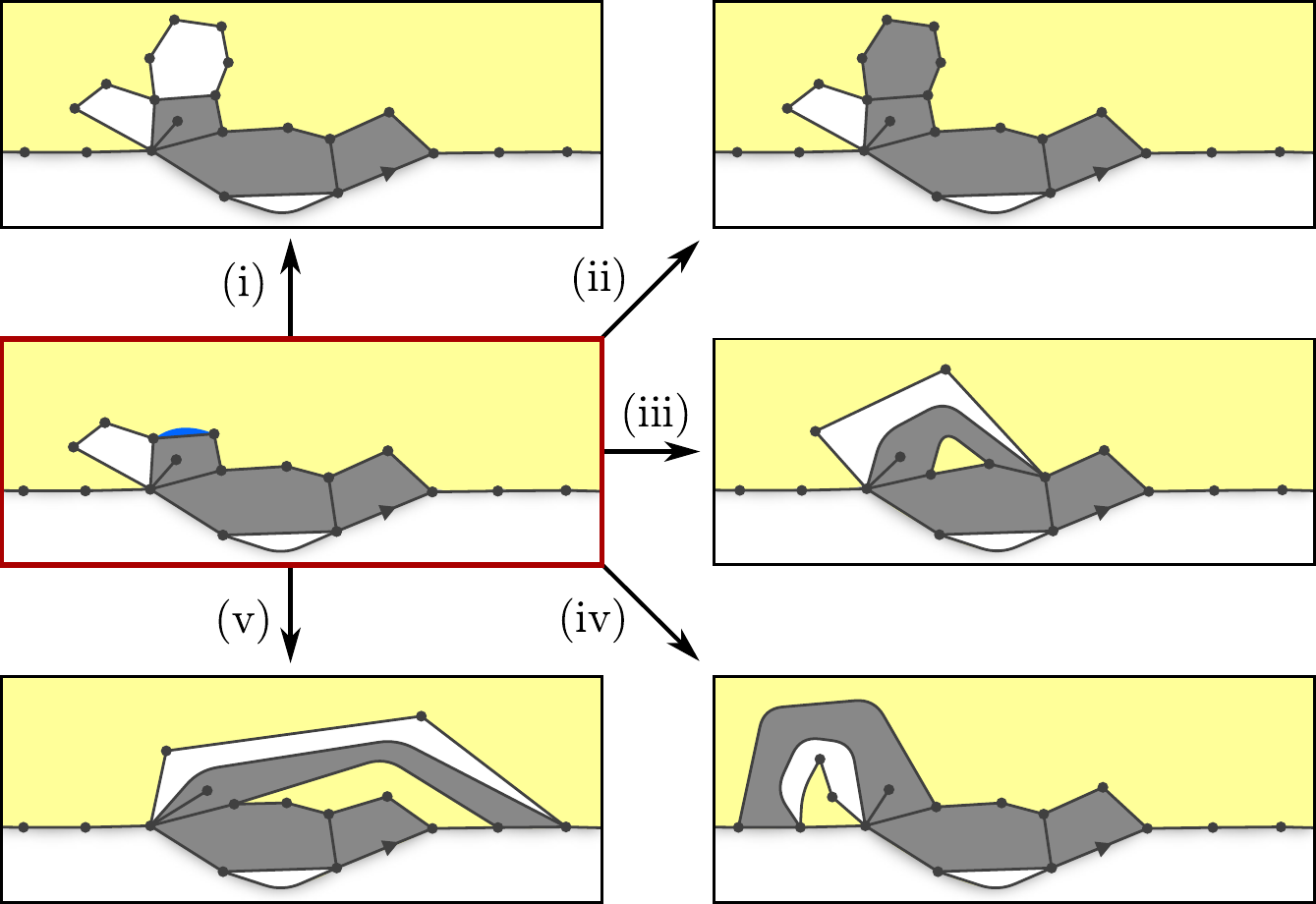}
\caption{The map on the left in the red frame depicts an explored region satisfying hypothesis $(H)$. The five other figures correspond to the five different situations when peeling the edge indicated in blue.  }
\label{fig:perco_half_plane}
\end{center}
\end{figure}

\begin{proof}[Proof of Proposition \ref{prop:number_edges_boundary_perco_half_plane}]
Let us consider the exploration process $(\overline{\mathfrak{e}}_i)_{i \ge 0}$ in which we select the left-most edge on the boundary of the black cluster. We let $\theta \in \N \cup\{\infty\}$ be the first instant at which there is no black edge on the boundary and for all $0 \le n \le \theta$, we let $P^\bullet_n$ be the number of black edges on the boundary of $\overline{\mathfrak{e}}_n$, so $P^\bullet_0 = 1$ and $P^\bullet_\theta = 0$. Let $X$ be a random variable with the following distribution: for $k \ge 0$,
\begin{equation}\label{eq:transition_proba_perco}
\begin{aligned}
\Pr{X = 2k} &= p \cdot \nu(k),\\
\Pr{X = -1} &= (1-p) \cdot \nu([0, \infty)) + \frac{\nu((-\infty, -1])}{2},\\
\Pr{X = -2k-2} &= \frac{\nu(-k-1)}{2}.
\end{aligned}
\end{equation}
We define a random walk $W$ started from $0$ and with i.i.d. steps distributed as $X$ and we set $\tau = \inf\{n \ge 1 : W_n \le -1\}$. Then $\theta$ has the law of $\tau$, and more precisely the sequence $(P^\bullet_0, \dots, P^\bullet_{\theta-1})$ has the law of $(W_0+1, \dots, W_{\tau-1}+1)$. Note that the random walk $W$ oscillates for all $p$, so $\tau$, and therefore $\theta$, is finite almost surely.

From \eqref{eq:tail_nu} we deduce that the walk $W$ satisfies 
\[\Pr{W_1>k} \sim \frac{c_+}{k}
\qquad\text{and}\qquad\Pr{W_1<-k} \sim \frac{c_-}{k} 
\qquad\text{as}\qquad k \to \infty.\]
with $c_+ = 2 p\cdot \mathsf{p}_\q$ and $c_- = \mathsf{p}_\q$.
Hence, when $p\neq 1/2$ Proposition \ref{prop:intro_tail_first_ladder_time_Cauchy_walk} already implies that $\Pr{\theta > n} = n^{-\lambda(p) + o(1)}$.
For $p=1/2$ we still need to consider the limit of $\E[W_1/(1+(W_1/n)^2)]$ as $n\to\infty$.
Since $\Pr{W_1 = -1}=1/2$ and $\Pr{W_1 = 2k} = \Pr{S_1=k}/2$ for $k\in\Z$ with $S$ the walk with increments of law $\nu$ started from $0$, we have
\[\Es{\frac{W_1}{1+\left(W_1/n\right)^2}} = \frac{1}{2} \cdot \frac{-1}{1+n^{-2}} + \Es{\frac{S_1}{1+\left(2S_1/n\right)^2}}.
\]
According to Proposition \ref{prop:convergence_walk_Cauchy} the walk $S$ is centred and therefore the second term is $o(1)$ as $n\to\infty$ (see the beginning of the proof of Proposition \ref{prop:intro_tail_first_ladder_time_Cauchy_walk}). 
Hence the hypothesis of Proposition \ref{prop:tail_first_ladder_time_Cauchy_walk_symmetric} is satisfied by $W$ with $b=-1/2$, and $\Pr{\theta > n} = n^{-\lambda(p) + o(1)}$ follows.

\begin{figure}[!ht]
\begin{center}
\includegraphics[width=.9\linewidth]{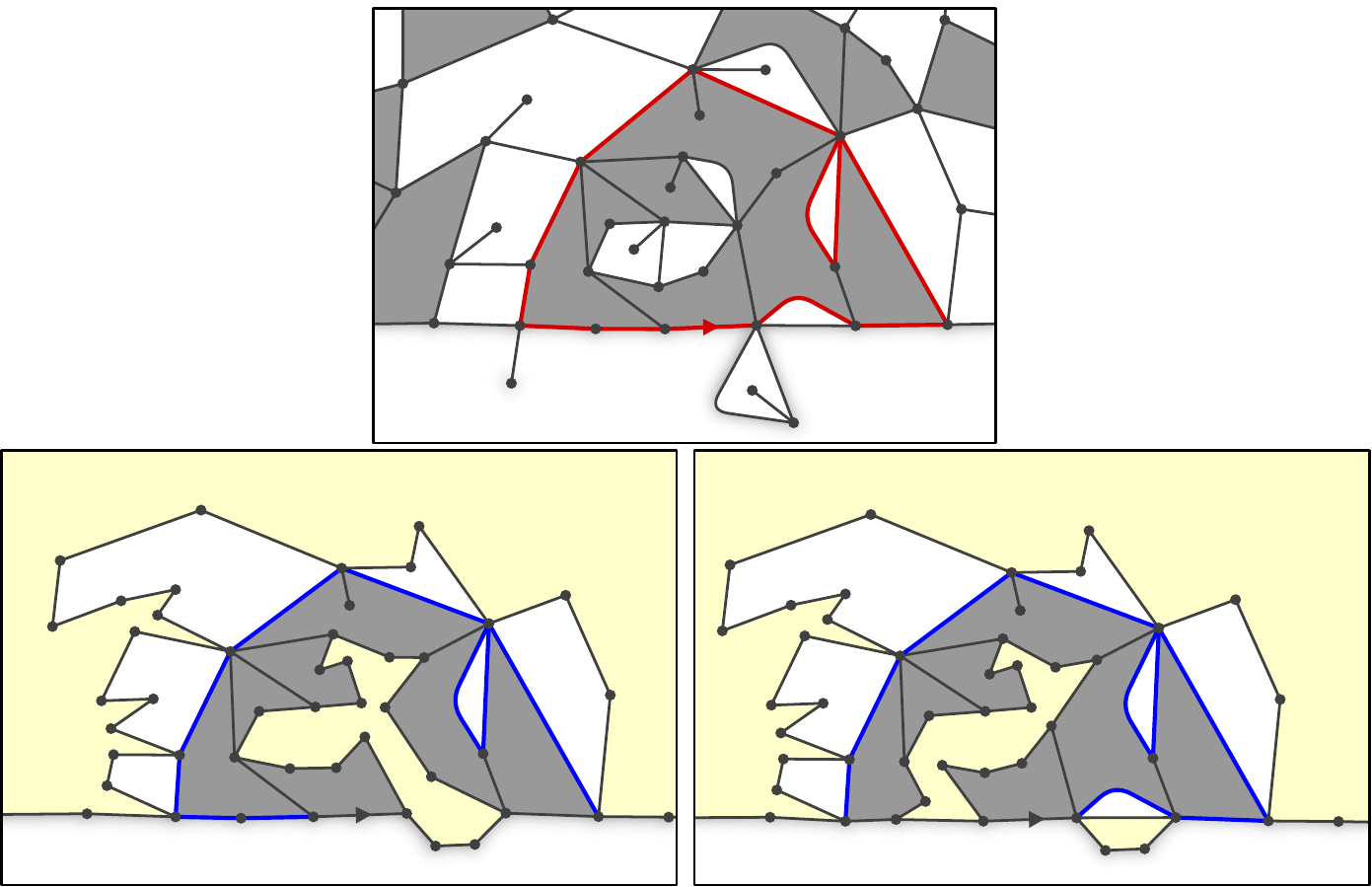}
\caption{Illustration of the outer-most boundary of the percolation cluster: $\mathscr{L} = 12$ is the number of edges on the red cycle and $\mathscr{N} = 9$ and $\mathscr{N}' = 9$ are the number of edges on the blue path on the left and on the right respectively.}
\label{fig:boundary_perco_cluster}
\end{center}
\end{figure}

We now aim at comparing the tail behaviour of $\mathscr{L}$ with that of $\theta$; we introduce an auxiliary random variable $\mathscr{N}$ as follows. Consider the stopped exploration $(\overline{\mathfrak{e}}_i)_{0 \le i < \theta}$ and for each $i < \theta$, let $\mathscr{N}_i$ denote the number of peeling steps among the first $i$ at which either we have discovered a new white face, or we have identified the selected edge with another one adjacent to a white face on the left. Let us set $\mathscr{N} = \mathscr{N}_{\theta-1}$, then the pair $(\theta, \mathscr{N})$ has the same law as $(\tau, \#\{1 \le k < \tau : W_k = W_{k-1}-1\})$. Observe that $\mathscr{N} \le \theta$. Let us set $\varphi(p) \coloneqq \P(W_1 = -1) =  (1-p) \nu([0, \infty)) + \nu((-\infty, -1])/2$ and fix a constant $C > 2/\varphi(p)$. We then write
\begin{align*}
\Pr{\mathscr{N} > n} &\ge \Pr{\mathscr{N} > n \text{ and } \theta > Cn}
\\
&\ge \Pr{\mathscr{N}_{Cn} > n \text{ and } \theta > Cn}
\\
&\ge \Pr{\theta > Cn} - \Pr{\mathscr{N}_{Cn} \le n \text{ and } \theta > Cn}
\\
&\ge \Pr{\theta > Cn} - \Pr{\mathscr{N}_{Cn} \le n}.
\end{align*}
Since $\mathscr{N}_{Cn}$ has the binomial distribution with parameters $Cn$ and $\varphi(p) > 2/C$, the Chernoff bound thus yield
\[\Pr{\mathscr{N}_{Cn} \le n}
\le \Pr{\mathscr{N}_{Cn} \le Cn \left(\varphi(p) - \frac{1}{C}\right)}
\le \exp\left(-2n/C\right)
= o\left(\Pr{\theta > Cn}\right).\]
It follows that
\[\Pr{\mathscr{N} > n} \ge \Pr{\theta > Cn} \cdot (1+o(1)) = n^{-\lambda(p) + o(1)}.\]
We conclude that $\P(\mathscr{N} > n) = n^{-\lambda(p) + o(1)}$.

Consider the exploration process defined similarly but now we select the right-most edge on the boundary of the black cluster at each step, and we denote by $\mathscr{N}'$ the number of steps at which either we have discovered a new white face, or we have identified the selected edge with another one adjacent to a white face on the right, until there is no black edge adjacent to the hole. Clearly, we have $\mathscr{L} \ge \sup\{\mathscr{N}, \mathscr{N}'\}$ and simple geometric considerations show that $\mathscr{L} \le \mathscr{N} + \mathscr{N}$: the two parts of length $\mathscr{N}$ and $\mathscr{N}'$ meet to form the outer-most boundary of length $\mathscr{L}$, see Figure \ref{fig:boundary_perco_cluster} for an illustration. The claim follows from the previous discussion after observing that $\mathscr{N}$ and $\mathscr{N}'$ have the same law.
\end{proof}



\begin{thebibliography}{10}

\bibitem{Ambjorn-Budd:Multi_point_functions_of_weighted_cubic_maps}
{\sc Ambj{\o}rn, J., and Budd, T.}
\newblock Multi-point functions of weighted cubic maps.
\newblock {\em Ann. Inst. Henri Poincar\'e D 3}, 1 (2016), 1--44.

\bibitem{Angel-Curien:Percolations_on_random_maps_half_plane_models}
{\sc Angel, O., and Curien, N.}
\newblock Percolations on random maps {I}: {H}alf-plane models.
\newblock {\em Ann. Inst. Henri Poincar\'e Probab. Stat. 51}, 2 (2015),
  405--431.

\bibitem{Bernardi-Curien-Miermont:A_Boltzmann_approach_to_percolation_on_random_triangulations}
{\sc Bernardi, O., Curien, N., and Miermont, G.}
\newblock A {B}oltzmann approach to percolation on random triangulations.
\newblock \emph{In preparation}, 2017.

\bibitem{Bertoin:Levy_processes}
{\sc Bertoin, J.}
\newblock {\em L{\'e}vy Processes}.
\newblock Cambridge Tracts in Mathematics. Cambridge University Press, 1998.

\bibitem{Bertoin-Yor:The_entrance_laws_of_self_similar_Markov_processes_and_exponential_functionals_of_Levy_processes}
{\sc Bertoin, J., and Yor, M.}
\newblock The entrance laws of self-similar {M}arkov processes and exponential
  functionals of {L}\'evy processes.
\newblock {\em Potential Anal. 17}, 4 (2002), 389--400.

\bibitem{Bingham-Goldie-Teugels:Regular_variation}
{\sc Bingham, N.~H., Goldie, C.~M., and Teugels, J.~L.}
\newblock {\em Regular variation}, vol.~27 of {\em Encyclopedia of Mathematics
  and its Applications}.
\newblock Cambridge University Press, Cambridge, 1989.

\bibitem{Bjornberg-Stefansson:Recurrence_of_bipartite_planar_maps}
{\sc Bj{\"o}rnberg, J., and Stef{\'a}nsson, S.~{\"O}.}
\newblock Recurrence of bipartite planar maps.
\newblock {\em Electron. J. Probab. 19\/} (2014), no. 31, 1--40.

\bibitem{Borot-Bouttier-Guitter:A_recursive_approach_to_the_O_n_model_on_random_maps_via_nested_loops}
{\sc Borot, G., Bouttier, J., and Guitter, E.}
\newblock A recursive approach to the {$O(n)$} model on random maps via nested
  loops.
\newblock {\em J. Phys. A 45}, 4 (2012), 045002, 38.

\bibitem{Budd:The_peeling_process_of_infinite_Boltzmann_planar_maps}
{\sc Budd, T.}
\newblock The peeling process of infinite {B}oltzmann planar maps.
\newblock {\em Electron. J. Combin. 23}, 1 (2016), Paper 1.28, 37.

\bibitem{Budd-Curien:Geometry_of_infinite_planar_maps_with_high_degrees}
{\sc Budd, T., and Curien, N.}
\newblock {Geometry of infinite planar maps with high degrees}.
\newblock {\em {\em To appear in} Electron. J. Probab. {\em Preprint available
  at \href{http://arxiv.org/abs/1602.01328}{\tt arXiv:1602.01328}}\/} (Feb.
  2016).

\bibitem{Budd:The_peeling_process_on_random_planar_maps_coupled_to_an_O_n_loop_model}
{\sc Budd, T., and {with an appendix jointly with L. Chen}}.
\newblock The peeling process on random planar maps coupled to an {$O(n)$} loop
  model.
\newblock \emph{In preparation}, 2017.

\bibitem{Caravenna-Chaumont:Invariance_principles_for_random_walks_conditioned_to_stay_positive}
{\sc Caravenna, F., and Chaumont, L.}
\newblock Invariance principles for random walks conditioned to stay positive.
\newblock {\em Ann. Inst. Henri Poincar\'e Probab. Stat. 44}, 1 (2008),
  170--190.

\bibitem{Chen-Curien-Maillard:The_perimeter_cascade_in_critical_Boltzmann_quadrangulations_decorated_by_an_O_n_loop_model}
{\sc Chen, L., Curien, N., and Maillard, P.}
\newblock {The perimeter cascade in critical Boltzmann quadrangulations
  decorated by an $O(n)$ loop model}.
\newblock {\em {\em Preprint available at
  \href{http://arxiv.org/abs/1702.06916}{\tt arXiv:1702.06916}}\/} (Feb. 2017).

\bibitem{Curien:Peccot}
{\sc Curien, N.}
\newblock Peeling random planar maps (lecture notes).
\newblock Available at
  \url{https://www.math.u-psud.fr/~curien/cours/peccot.pdf}, 2017.

\bibitem{Curien-Le_Gall:First_passage_percolation_and_local_modifications_of_distances_in_random_triangulations}
{\sc {Curien}, N., and {Le Gall}, J.-F.}
\newblock {First-passage percolation and local modifications of distances in
  random triangulations}.
\newblock {\em {\em Preprint available at
  \href{http://arxiv.org/abs/1511.04264}{\tt arXiv:1511.04264}}\/} (Nov. 2015).

\bibitem{Curien-Le_Gall:Scaling_limits_for_the_peeling_process_on_random_maps}
{\sc Curien, N., and Le~Gall, J.-F.}
\newblock Scaling limits for the peeling process on random maps.
\newblock {\em Ann. Inst. Henri Poincar\'e Probab. Stat. 53}, 1 (2017),
  322--357.

\bibitem{Darling:The_maximum_of_sums_of_stable_random_variables}
{\sc Darling, D.~A.}
\newblock The maximum of sums of stable random variables.
\newblock {\em Trans. Amer. Math. Soc. 83\/} (1956), 164--169.

\bibitem{Doney:Local_behaviour_of_first_passage_probabilities}
{\sc Doney, R.~A.}
\newblock Local behaviour of first passage probabilities.
\newblock {\em Probab. Theory Related Fields 152}, 3-4 (2012), 559--588.

\bibitem{Doney-Savov:The_asymptotic_behavior_of_densities_related_to_the_supremum_of_a_stable_process}
{\sc Doney, R.~A., and Savov, M.~S.}
\newblock The asymptotic behavior of densities related to the supremum of a
  stable process.
\newblock {\em Ann. Probab. 38}, 1 (2010), 316--326.

\bibitem{Feller:An_introduction_to_probability_theory_and_its_applications_Volume_2}
{\sc Feller, W.}
\newblock {\em An introduction to probability theory and its applications.
  {V}ol. {II}.}
\newblock Second edition. John Wiley \& Sons, Inc., New York-London-Sydney,
  1971.

\bibitem{Flajolet-Odlyzko:Singularity_analysis_of_generating_functions}
{\sc Flajolet, P., and Odlyzko, A.}
\newblock Singularity analysis of generating functions.
\newblock {\em SIAM J. Discrete Math. 3}, 2 (1990), 216--240.

\bibitem{Hall:Two_sided_bounds_on_the_rate_of_convergence_to_a_stable_law}
{\sc Hall, P.}
\newblock Two-sided bounds on the rate of convergence to a stable law.
\newblock {\em Z. Wahrsch. Verw. Gebiete 57}, 3 (1981), 349--364.

\bibitem{Jacod-Shiryaev:Limit_theorems_for_stochastic_processes}
{\sc Jacod, J., and Shiryaev, A.~N.}
\newblock {\em Limit theorems for stochastic processes}, second~ed., vol.~288
  of {\em Grundlehren der Mathematischen Wissenschaften [Fundamental Principles
  of Mathematical Sciences]}.
\newblock Springer-Verlag, Berlin, 2003.

\bibitem{Kyprianou-Pardo-Rivero:Exact_and_asymptotic_n_tuple_laws_at_first_and_last_passage}
{\sc Kyprianou, A.~E., Pardo, J.~C., and Rivero, V.}
\newblock Exact and asymptotic {$n$}-tuple laws at first and last passage.
\newblock {\em Ann. Appl. Probab. 20}, 2 (2010), 522--564.

\bibitem{Le_Gall-Miermont:Scaling_limits_of_random_planar_maps_with_large_faces}
{\sc Le~Gall, J.-F., and Miermont, G.}
\newblock Scaling limits of random planar maps with large faces.
\newblock {\em Ann. Probab. 39}, 1 (2011), 1--69.

\bibitem{Marckert-Miermont:Invariance_principles_for_random_bipartite_planar_maps}
{\sc Marckert, J.-F., and Miermont, G.}
\newblock Invariance principles for random bipartite planar maps.
\newblock {\em Ann. Probab. 35}, 5 (2007), 1642--1705.

\bibitem{Richier:Universal_aspects_of_critical_percolation_on_random_half_planar_maps}
{\sc Richier, L.~c.}
\newblock Universal aspects of critical percolation on random half-planar maps.
\newblock {\em Electron. J. Probab. 20\/} (2015), Paper No. 129, 45.

\bibitem{Sheffield-Werner:Conformal_loop_ensembles_the_Markovian_characterization_and_the_loop_soup_construction}
{\sc Sheffield, S., and Werner, W.}
\newblock Conformal loop ensembles: the {M}arkovian characterization and the
  loop-soup construction.
\newblock {\em Ann. of Math. (2) 176}, 3 (2012), 1827--1917.

\bibitem{Stephenson:Local_convergence_of_large_critical_multi_type_Galton_Watson_trees_and_applications_to_random_maps}
{\sc Stephenson, R.}
\newblock {Local Convergence of Large Critical Multi-type Galton--Watson Trees
  and Applications to Random Maps}.
\newblock {\em Journal of Theoretical Probability\/} (2016), 1--47.

\bibitem{Vatutin-Wachtel:Local_probabilities_for_random_walks_conditioned_to_stay_positive}
{\sc Vatutin, V.~A., and Wachtel, V.}
\newblock Local probabilities for random walks conditioned to stay positive.
\newblock {\em Probab. Theory Related Fields 143}, 1-2 (2009), 177--217.

\end{thebibliography}

\end{document}